
\documentclass[a4paper,11pt, reqno]{amsart}
\parindent 0pt
\usepackage[utf8]{inputenc}
\usepackage{amsmath}
\usepackage{amssymb}
\usepackage{amsthm}
\usepackage{a4wide}
\usepackage{paralist}
\usepackage{marvosym}
\usepackage{esint}
\usepackage{dsfont}
\usepackage{mathrsfs} 
\usepackage{hyperref} 
\usepackage{color} 

\newtheorem{theorem}{Theorem}[section]
\newtheorem{lemma}[theorem]{Lemma}
\newtheorem{proposition}[theorem]{Proposition}
\newtheorem{corollary}[theorem]{Corollary}

\theoremstyle{definition}
\newtheorem{definition}[theorem]{Definition}
\newtheorem*{acknowledgements}{Acknowledgements}

\theoremstyle{remark}
\newtheorem{remark}[theorem]{Remark}

\numberwithin{equation}{section} 

\def\ba{\begin{array}}
\def\ea{\end{array}}
\def\bea{\begin{eqnarray} \label}
\def\eea{\end{eqnarray}}
\def\be{\begin{equation} \label}
\def\ee{\end{equation}}
\def\bit{\begin{itemize}}
\def\eit{\end{itemize}}
\def\ben{\begin{enumerate}}
\def\een{\end{enumerate}}

\def\lan{\langle}
\def\ran{\rangle}


\def\EE{\mathbb{E}}

\def\NN{\mathbb{N}}
\def\PP{\mathbb{P}}

\def\RR{\mathbb{R}}

\def\a{\alpha}

\def\g{\gamma}

\def\e{\varepsilon}

\def\n{\eta}
\def\hn{\hat\eta} 

\def\l{\lambda}

\def\s{\sigma}

\def\ph{\varphi}
\def\o{\omega}
\def\G{\Gamma}

\def\O{\Omega}


\def\bG{\mathbf{G}}
\def\bP{\mathbf{P}}

\def\bY{\mathbf{Y}}
\def\bX{\mathbf{X}}
\def\one{\mathbf{1}} 


\def\cF{\mathcal{F}}

\def\cL{\mathcal{L}}
\def\cN{\mathcal{N}}

\def\cZ{\mathcal{Z}}

\def\sZ{\mathscr{Z}}

\def\dint{\textup{d}}
\def\tld{\widetilde} 


\title[A four moments theorem for Gamma limits on a Poisson chaos]{A four moments theorem for Gamma limits\\ on a Poisson chaos}

\thanks{The first author has been supported by the Swiss National Science Foundation (SNF) via grant 152609. The second author has been supported by the German Research Foundation (DFG) via SFB-TR 12.}

\author[T. Fissler]{Tobias Fissler}
\address{Tobias Fissler: Institute of Mathematical Statistics and Actuarial Science, University of Bern, Switzerland}
\email{tobias.fissler@stat.unibe.ch}

\author[C. Th\"ale]{Christoph Th\"ale}
\address{Christoph Th\"ale: Faculty of Mathematics, Ruhr University Bochum, Germany}
\email{christoph.thaele@rub.de}


\subjclass[2010]{Primary: 60F05, 60H05, 60G57; Secondary: 60G55, 62E20}
\keywords{Contraction, four moments theorem, Gamma distribution, homogeneous sums, non-central limit theorem, Poisson chaos, Poisson random measure, universality, $U$-statistics}
\date{}

\begin{document}

\begin{abstract}
This paper deals with sequences of random variables belonging to a fixed chaos of order $q$ generated by a Poisson random measure on a Polish space. The problem is investigated whether convergence of the third and fourth moment of such a suitably normalized  sequence to the third and fourth moment of a centred Gamma law implies convergence in distribution of the involved random variables. A positive answer is obtained for $q=2$ and $q=4$. The proof of this four moments theorem is based on a number of new estimates for contraction norms. Applications concern homogeneous sums and $U$-statistics on the Poisson space.
\end{abstract}

\maketitle

\section{Introduction}

Probabilistic limit theorems for sequences of multiple stochastic integrals have found considerable attention during the last decade. One of the most remarkable results in this direction is the fourth moment theorem of Nualart and Peccati obtained in the seminal paper \cite{NualartPeccati2005}. It asserts that a sequence of suitably normalized multiple stochastic integrals of order $q\geq 1$ with respect to a Gaussian random measure on a Polish space satisfies a central limit theorem if and only if the sequence of their fourth moments converges to $3$, the fourth moment of a standard Gaussian distribution. This drastic simplification of the method of moments has stimulated a large number of applications, for example to Gaussian random processes or fields, mathematical statistics, random matrices or random polynomials (we refer the reader to the monograph \cite{Nourdin2012Book} and also to the constantly updated web-page \texttt{https://sites.google.com/site/malliavinstein/home} for further details and references).

Besides the fourth moment theorem mentioned above, there is also a `non-central' version dealing with the approximation of a sequence of multiple stochastic integrals by a Gamma-distributed random variable, cf.\ \cite{NourdinPeccati2009a}. Again, the result is a drastic simplification of the method of moments as it delivers convergence in distribution if and only if a certain linear combination of the third and the fourth moment of the involved random variables converges to the corresponding expression for Gamma random variables. In view of normalization conditions we see that in fact the first four moments of the random variables are involved, which gives rise to the name `four moments theorem' for such a result. To simplify the terminology, we will also speak about a four moments theorem in the case of normal approximation.

The present paper asks whether a similar non-central limit theorem is available for sequences of multiple stochastic integrals with respect to a Poisson random measure on a Polish space. In this set-up, a central four moments theorem has been derived by Lachi\`eze-Rey and Peccati in \cite{Lachieze-ReyPeccati2013a} under an additional sign condition (see also \cite{EichelsbacherThaele2013}), which, on the Poisson space, seems to be unavoidable. While Gamma approximation on the Poisson space in the spirit of the Malliavin-Stein method has been dealt with in \cite{PeccatiThaeleGamma}, the problem of a four moments theorem similar to that for Gaussian multiple stochastic integrals mentioned above remained open in general. The main result of our paper, Theorem \ref{thm:main_theorem}, delivers a four moments theorem for sequences of Poisson stochastic integrals of order $q=2$ and $q=4$. For this reason, the present work can be seen as a natural continuation of \cite{PeccatiThaeleGamma}, where the case $q=2$ has already been settled under additional assumptions, which we are able to overcome. The proof of our four moments theorem relies on a couple of new estimates for norms of so-called contraction kernels and the combinatorially involved multiplication formula for stochastic interals on the Poisson space. It is precisely this combinatorial complexity which forces that our proof yields a positive result only for sequences of Poisson stochastic integrals of order $q=2$ and $q=4$. However, all intermediate steps in our proof will be formulated for general $q\geq 2$ to make as transparent as possible and to highlight, in which argument the restrictive condition on the order of the integrals arises. The main difference between the central and the non-central version of the four moments theorem is that in the non-central case one has to deal with a linear combination of the third and the fourth moment of the stochastic integrals, while the central case only requires an analysis of the fourth moment. Even under additional conditions on the integrands, this leads to difficulties, which we can overcome only for $q=2$ and $q=4$. We have to leave it as an open problem for future research to extend our result to arbitrary $q$ by other methods.

The main result of our paper is applied to a universality question for homogeneous sums on a Poisson chaos as well as to a non-central analogue of de Jong's theorem for completely degenerate $U$-statistics of order two and four. This partially complements the results for Gamma and normal approximation obtained in \cite{EichelsbacherThaele2013,PeccatiThaeleGamma} and \cite{PeccatiZheng2013}. We emphasize in this context that limit theorems for non-linear functionals of Poisson random measures have recently found numerous applications especially in geometric probability or stochastic geometry \cite{EichelsbacherThaele2013,Lachieze-ReyPeccati2013a,LPS,LPST,PeccatiThaeleGamma,STScaling,STFlats} and in the theory of L\'evy processes \cite{EichelsbacherThaele2013,LPS,PeccatiSoleTaqquUtzet2010,PeccatiZhengMulti}.

\medspace

Our paper is structured as follows. In Section \ref{sec:Preliminaries}, we introduce and collect necessary background material. To contrast our results with those available for Gaussian multiple stochastic integrals, we shall present them in the context of completely random measures, which captures both settings. Our main results are the content of Section \ref{sec:4Moments}, while Section \ref{sec:applications} contains applications to homogeneous sums and $U$-statistics. The proof of Theorem \ref{thm:main_theorem} is presented in the final Section \ref{sec:ProofMainThm}.

\section{Preliminaries}\label{sec:Preliminaries}

In this section, we introduce the basic definitions, mainly related to Poisson stochastic integrals. For further details and background material we refer the reader to the monograph \cite{PeccatiTaqquBook} as well as to the papers \cite{NualartVives1990, PeccatiSoleTaqquUtzet2010}.

\subsection{Completely random measures}

Without loss of generality, we assume that all objects are defined on a common probability space $(\O, \cF, \PP)$. Let $\cZ$ denote a Polish space with Borel $\s$-field $\sZ$, which is equipped with a non-atomic $\s$-finite measure $\mu$. We define the class $\sZ_\mu = \{ B\in \sZ \colon \mu(B) <\infty\}$ and let $\ph = \{\ph(B) \colon B\in \sZ_\mu\}$ indicate a \emph{completely random measure} on $(\cZ, \sZ)$ with control measure $\mu$. That is, $\ph$ is a set of random variables such that
\begin{enumerate}[(i)]
\item
for every collection of pairwise disjoint elements $B_1, \ldots, B_n \in \sZ_\mu$, the random variables $\ph(B_1), \ldots, \ph(B_n)$ are independent;
\item
for every $B, C\in \sZ_\mu$, one has the identity $\EE[\ph(B)\ph(C)] = \mu(B\cap C)$.
\end{enumerate}
If $\EE[\ph(B)] = 0$ and $\ph(B)\in L^2(\PP)$ (i.e., $\ph(B)$ is square-integrable with respect to $\PP$) for every $B\in \sZ_\mu$, then the mapping $\sZ_\mu \to L^2(\PP), B\mapsto \ph(B)$, is $\s$-additive in the sense that for every sequence $(B_n)_{n\geq 1}$ of pairwise disjoint elements of $\sZ_\mu$, one has that 
\be{eqn:sigma-additivity}
\ph\left(\bigcup_{n=1}^\infty B_n\right) = \sum_{n=1}^\infty \ph(B_n) \qquad \PP\text{-a.s.},
\ee
where the right-hand side converges in $L^2(\PP)$.
By $\s(\ph)$ we denote the $\s$-field generated by $\ph$.

In this paper, we shall deal with two special and prominent instances of completely random measures, namely a centred Gaussian and a compensated Poisson measure.
\begin{enumerate}
\item
A \emph{centred Gaussian measure} with control measure $\mu$ is denoted by $G$ and is a completely random measure such that the elements of $G$ are jointly Gaussian and centred.
\item
A \emph{compensated Poisson measure} with control measure $\mu$ is indicated by $\hn$ and is a completely random measure such that for every $B\in \sZ_\mu$, $\hn(B) \stackrel{d}{=} \n(B) - \mu(B)$, where $\n(B)$ is a Poisson random variable with mean $\mu(B)$.
\end{enumerate}
By definition, both $G$ and $\hn$ are centred families in $L^2(\PP)$, implying that \eqref{eqn:sigma-additivity} is satisfied. Moreover, for $\PP$-almost every $\o\in\O$, $\hn(\cdot, \o)$ is a signed measure on $(\cZ, \sZ)$, while $G$ does not satisfy this property, cf.\ \cite[Example 5.1.7 (iii)]{PeccatiTaqquBook}.

\subsection{$L^2$-spaces}

Let $q\ge1$ be an integer. We shall use the shorthand notation $L^2(\mu^q)$ for the space $L^2(\cZ^q, \sZ^{q}, \mu^q)$ of (deterministic) functions that are square-integrable with respect to $\mu^q$. $L_s^2(\mu^q)$ stands for the subspace of $L^2(\mu^q)$ consisting of symmetric functions, i.e., functions that are $\mu^q$-a.e.\ invariant under permutations of their arguments. For $f,g\in L^2(\mu^q)$ we define the scalar product $ \lan f ,g \ran_{L^2(\mu^q)} = \int_{\cZ^q} fg \,\dint \mu^q$ and the norm $\|f\|_{L^2(\mu^q)} = \lan f ,f \ran_{L^2(\mu^q)}^{1/2}$. If there is no risk of confusion, we suppress in what follows the dependency on $q$ and $\mu$, and merely write $\lan\, \cdot\,, \,\cdot \,\ran$ and $\| \cdot\|$, respectively. Moreover, let $L^2(\s(\ph), \PP)$ denote the space of all square-integrable functionals of $\ph$, where $\ph$ is either a Poisson measure $\hn$ or a Gaussian measure $G$. If $F\in L^2(\s(\ph), \PP)$, we shall sometimes write $F = F(\ph)$ in order to underpin the dependency of $F$ on $\ph$. As a convention, we shall use lower case variables for elements of $L^2(\mu^q)$ and capitals for elements of $L^2(\s(\ph), \PP)$. Finally, we introduce the space $L^2(\PP, L^2(\mu)) = L^2(\O\times \cZ, \cF\otimes \sZ, \PP\otimes \mu)$ as the space of all jointly square-integrable measurable mappings $u\colon \O\times \cZ \to \RR$. If $u,v\in L^2(\PP, L^2(\mu))$, their scalar product is defined as $\lan u, v\ran_{L^2(\PP, L^2(\mu))} = \int_\Omega\int_\cZ u(\omega,z)v(\omega,z)\mu(\dint z)\PP(\dint\omega)$ and we denote by $\|\,\cdot\,\|_{L^2(\PP, L^2(\mu))}$ the norm induced by it.

\subsection{Multiple stochastic integrals}

Let $\ph=\hn$ or $\ph= G$.
For every integer $q\ge1$ we denote the \emph{multiple stochastic integral} of order $q$ with respect to $\ph$ by $I_q^\ph$. It is a mapping $I_q^\ph \colon L_s^2(\mu^q) \to L^2(\s(\ph), \PP)$, which is linear and continuous. Additionally, for $f\in L_s^2(\mu^q)$, the random variable $I_q^\ph(f)$ is centred. Moreover, the multiple stochastic integral satisfies the \emph{It\^o isometry}
\be{eqn:isometry}
\EE[I_p^\ph(f) I_q^\ph(g)] =
\begin{cases}
0 & \text{if } q\neq p \\
q!\lan f,g\ran_{L_s^2(\mu^q)} & \text{if } q=p
\end{cases}
\ee
for any integers $p,q\ge1$ and $f\in L_s^2(\mu^p)$, $g\in L_s^2(\mu^q)$. For general $f\in L^2(\mu^q)$, we put $I_q^\ph(f) = I_q^\ph(\tld f)$, where 
\[
\tld f(z_1, \ldots, z_q) = \frac{1}{q!} \sum_{\pi\in\Pi_q} f(z_{\pi(1)}, \ldots, z_{\pi(q)})
\]
is the canonical symmetrization of $f$, and $\Pi_q$ is the group of all $q!$ permutations $\pi$ of the set $\{1,\ldots, q\}$. We emphasize that due to Jensen's inequality and the convexity of norms, we have the inequality $\|\tld f\| \le \| f\|$. As a convention, we set $I_0^\ph \colon \RR \to \RR$ equal to the identity map on $\RR$.

Since this article is mostly concerned with Poisson integrals, we shall write $I_q$ instead of $I_q^{\hn}$.

\subsection{Chaos decomposition}

The It\^o isometry in \eqref{eqn:isometry} formalizes an orthogonality relation between multiple stochastic integrals of different order. This induces the following so-called \emph{chaos decomposition} (see \cite{NualartVives1990}):
\be{eqn:decomposition_a}
L^2(\s(\ph), \PP) = \bigoplus_{q=0}^\infty W_q^\ph\,,
\ee
where $W_0^\ph= \RR$ and $W_q^\ph= \{I_q^\ph(f)\colon f\in L_s^2(\mu^q) \}$ for $\ph=\hn$ or $\ph = G$, $q\ge1$. Depending on the choice of $\ph$, we shall often use the terms \emph{Poisson chaos} and \emph{Gaussian chaos of order $q$} for $W_q^\ph$, respectively.

A consequence of \eqref{eqn:decomposition_a} is that any $F\in L^2(\s(\ph), \PP)$, with $\ph = \hn$ or $\ph=G$, admits a chaotic decomposition 
\[
F= \EE[F] + \sum_{q=1}^\infty I_q^\ph(f_q)\,,
\]
where the kernels $f_q \in L_s^2(\mu^q)$ are unique $\mu^q$-a.e.\ and the series converges in $L^2(\PP)$.

\subsection{Contractions}

Fix integers $p, q \ge1$ and functions $f\in L_s^2(\mu^p)$, $g\in L_s^2(\mu^q)$. For any $r\in\{0,\ldots, p\wedge q\}$ and $\ell \in \{1,\ldots, r\}$ we define the \emph{contraction} $f\star_r^\ell g\colon 
\cZ^{p+q -r - \ell} \to \RR$ which acts on the tensor product $f\otimes g$ and reduces the number of variables from $p+q$ to $p+q - r - \ell$ in the following way: $r$ variables are identified and among these, $\ell$ are integrated out with respect to $\mu$. More formally,
\[
\begin{split}
f\star_r^\ell g& (\g_1, \ldots, \g_{r-\ell}, t_1, \ldots, t_{p-r}, s_1, \ldots, s_{q-r}) \\
&= \int_{\cZ^\ell} f(z_1, \ldots, z_\ell, \g_1, \ldots, \g_{r-\ell}, t_1, \ldots, t_{p-r}) \\
&\qquad \times g(z_1, \ldots, z_\ell, \g_1, \ldots, \g_{r-\ell},s_1, \ldots, s_{q-r})\,
\mu^\ell (\dint( z_1 \ldots  z_\ell))\,,
\end{split}
\]
and for $\ell=0$ we put
\[
\begin{split}
f\star_r^0 g& (\g_1, \ldots, \g_{r}, t_1, \ldots, t_{p-r}, s_1, \ldots, s_{q-r}) \\
&= f(\g_1, \ldots, \g_{r}, t_1, \ldots, t_{p-r}) g( \g_1, \ldots, \g_{r},s_1, \ldots, s_{q-r})\,.
\end{split}
\]
Note that even if $f$ and $g$ are symmetric, the contraction $f\star_r^\ell g$ is not necessarily symmetric. We denote the canonical symmetrization by
\[
f \,\tld \star_r^\ell g (z_1, \ldots, z_{p+q - r - \ell}) = \frac{1}{(p+q-r-\ell)!} \sum_{\pi\in\Pi_{p+q-r-\ell}} f\star_r^\ell g(z_{\pi(1)},\ldots, z_{\pi(p+q - r - \ell)}).
\]
We also emphasize that for $f\in L_s^2(\mu^p)$ and $g\in L_s^2(\mu^q)$, the contraction $f\star_r^\ell g$ is neither necessarily well-defined nor necessarily an element of $L^2(\mu^{p+q-r-\ell})$. At least, by using the Cauchy-Schwarz inequality, we can deduce that $f\star_r^r g \in L^2(\mu^{p+q - 2r})$ for any $r\in\{0,\ldots, p\wedge q\}$. For this reason and to circumvent any complications in the calculations, we make the following technical assumptions.

\subsection{Technical assumptions (A)}\label{subsec:Assumptions}

We use the same set of technical assumptions as in \cite{Lachieze-ReyPeccati2013a, PeccatiSoleTaqquUtzet2010, PeccatiThaeleGamma}. For a detailed explanation of the conditions and their consequences, we refer to these works. 

For a sequence $F_n = I_q(f_n)$ of multiple integrals of fixed order $q\ge1$ with $f_n \in L^2(\mu_n^q)$ for every $n\ge1$ (we allow the non-atomic and $\s$-finite measure to vary with $n$), we assume that the following three technical conditions are satisfied:
\begin{enumerate}
\item
for any $r\in \{1, \ldots, q\}$, the contraction $f_n \star_q^{q-r} f_n$ is an element of $L^2(\mu_n^r)$;
\item
for any $r\in \{1, \ldots, q\}$, $\ell \in\{1, \ldots, r\}$ and $(z_1, \ldots, z_{2q-r-\ell}) \in \cZ^{2q-r-\ell}$, we have that $(|f_n|\star_r^\ell |f_n|)(z_1, \ldots, z_{2q-r-\ell})$ is well-defined and finite;
\item
for any $k\in \{0,\ldots, 2(q-1)\}$ and any $r$ and $\ell$ satisfying $ k = 2(q-1) - r - \ell$, we have that 
\[
\int_\cZ \sqrt{\int_{\cZ^k} \big( f_n(z, \cdot) \star_r^\ell f_n(z,\cdot) \big)^2 \dint \mu_n^k} \, \mu_n(\dint z) < \infty\,.
\]
\end{enumerate}

\subsection{Multiplication formula}

A very convenient property of multiple stochastic integrals is that one can express the product of two such integrals as a linear combination of multiple integrals of contraction kernels. More precisely, we have the following multiplication formula for Poisson integrals, which is taken from \cite[Proposition 6.5.1]{PeccatiTaqquBook}.

\begin{lemma}[Multiplication formula for Poisson integrals]\label{lemma:multiplication}
Let $f\in L_s^2(\mu^p)$ and $g\in L_s^2(\mu^q)$, $p,q\ge1$. Suppose that $f\star_r^\ell g \in L^2(\mu^{p+q - r - \ell})$ for every $r \in \{0, \ldots, p\wedge q\}$ and every $\ell\in\{0, \ldots, r\}$. Then
\be{eqn:multiplication}
I_p(f) I_q(g) = \sum_{r=0}^{p\wedge q} r! \binom pr \binom qr \sum_{\ell=0}^r I_{p+q - r - \ell} (f\,\tld\star_r^\ell g).
\ee
\end{lemma}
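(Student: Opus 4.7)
The plan is to reduce to the case of simple symmetric kernels by density and then to verify the identity combinatorially using the Charlier representation of multiple Poisson integrals.

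First, I would observe that finite linear combinations of tensor products of the form $\mathbf{1}_{A_1}^{\otimes n_1}\otimes\cdots\otimes\mathbf{1}_{A_N}^{\otimes n_N}$, with $A_1,\ldots,A_N$ pairwise disjoint elements of $\sZ_\mu$ and $n_1+\cdots+n_N$ equal to the correct tensor order, are dense in the symmetric space $L_s^2(\mu^k)$ for every $k\ge 1$. The multiple integrals $I_p,I_q$ are continuous in $L^2(\PP)$ by the It\^o isometry \eqref{eqn:isometry}, so their product is continuous in $L^1(\PP)$ by Cauchy--Schwarz. On the right-hand side of \eqref{eqn:multiplication}, the integrability hypothesis $f\star_r^\ell g\in L^2(\mu^{p+q-r-\ell})$ ensures that each summand is a well-defined element of $L^2(\PP)$. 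It therefore suffices to establish \eqref{eqn:multiplication} when $f$ and $g$ are symmetric simple kernels built from a common finite partition $\{A_1,\ldots,A_N\}$, and then to extend the identity by an approximation argument.

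Second, for such simple kernels, one has the classical factorization
\[
I_p\bigl(\mathbf{1}_{A_1}^{\otimes n_1}\otimes\cdots\otimes\mathbf{1}_{A_N}^{\otimes n_N}\bigr) = \prod_{i=1}^N c_{n_i}\bigl(\hn(A_i);\mu(A_i)\bigr),
\]
where $c_n(\,\cdot\,;\l)$ is the $n$-th Charlier polynomial with parameter $\l$. This identity is proved by induction on the order, using independence of $\hn(A_i)$ for disjoint $A_i$ together with the orthogonality of the Charlier polynomials under the Poisson distribution with parameter $\l$.

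Third, I would expand the product $I_p(f)I_q(g)$ cell by cell using the Touchard-type product formula for Charlier polynomials,
\[
c_m(x;\l)\,c_n(x;\l)=\sum_{k=0}^{m\wedge n}\binom{m}{k}\binom{n}{k}k!\sum_{j=0}^{k}\binom{k}{j}\l^{\,j}\,c_{m+n-k-j}(x;\l),
\]
which is the combinatorial input that distinguishes the Poisson case from the Gaussian one (where only the $j=k$ term survives). Aggregating the local identification indices $k$ across the cells $A_i$ yields the global identification index $r$, and aggregating the local integration indices $j$ yields the global partial integration index $\ell$, with the corresponding powers of $\l=\mu(A_i)$ reproducing exactly the $\ell$-fold integrations encoded by $f\star_r^\ell g$.

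The main obstacle is the combinatorial bookkeeping in the final step: one must verify that the weighted sum of tensor products produced by the cell-wise Touchard expansion reassembles, after restoring the symmetry of $f$ and $g$ via the symmetrization $\tld\star_r^\ell$, into precisely $r!\binom{p}{r}\binom{q}{r}\,I_{p+q-r-\ell}\bigl(f\,\tld\star_r^\ell g\bigr)$ for each pair $(r,\ell)$. The prefactor $r!\binom{p}{r}\binom{q}{r}$ arises as the count of ways to choose $r$ arguments of $f$, $r$ arguments of $g$, and a bijection between them, which are exactly the configurations collapsed by the symmetrization. Once the identity is established on simple kernels, the extension to general $f$ and $g$ satisfying the stated integrability hypotheses follows by a standard density argument; the hypothesis $f\star_r^\ell g\in L^2(\mu^{p+q-r-\ell})$ for every relevant $(r,\ell)$ is exactly what is needed to ensure that no term on the right-hand side blows up along an approximating sequence, which is the only subtle point in the limit passage since the contractions $\star_r^\ell$ with $\ell<r$ are not in general $L^2$-continuous.
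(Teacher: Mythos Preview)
The paper does not supply a proof of this lemma at all; it is quoted verbatim from \cite[Proposition 6.5.1]{PeccatiTaqquBook} and used as a black box. There is therefore no in-paper argument to compare against.

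Your sketch is essentially the standard proof one finds in that reference (or in the earlier Kabanov--Surgailis and Nualart--Vives literature): density reduction to simple kernels over a common partition, the Charlier-polynomial factorization $I_p(\mathbf{1}_{A_1}^{\otimes n_1}\otimes\cdots\otimes\mathbf{1}_{A_N}^{\otimes n_N})=\prod_i c_{n_i}(\eta(A_i);\mu(A_i))$, the Touchard-type linearization of Charlier products, and cell-wise aggregation of the identification index $r$ and the partial-integration index $\ell$. Your remark that in the Gaussian case only the term $j=k$ survives, recovering the simpler Hermite product formula, is exactly the structural distinction between the two multiplication formulae. You also correctly flag the one genuinely delicate point in the limit passage: the contractions $\star_r^\ell$ with $\ell<r$ are not $L^2$-continuous, so the integrability hypothesis on $f\star_r^\ell g$ is not a mere technicality but is what makes the right-hand side stable under approximation. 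In a full write-up you would need to say a word more here---for instance, approximate by truncation so that dominated convergence applies to each contraction separately---but as a proof plan this is sound and complete.
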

We remark that if a kernel $f\in L_s^2(\mu^q)$ satisfies the technical assumptions (A), the assumptions of Lemma \ref{lemma:multiplication} are automatically satisfied if $g=f$, implying that $I_q(f)^2 \in L^2(\s(\hn), \PP)$. 
To simplify our notation, for $f\in L_s^2(\mu^q)$ we put $G_0^q f = q!\|f\|^2$ and 
\be{eqn:G_p^q}
G_p^q f = \sum_{r=0}^q \sum_{\ell=0}^r \one(2q - r - \ell=p) r! \binom qr^2 \binom r\ell f \,\tld\star_r^\ell f 
\ee
for $p\in\{1, \ldots, 2q\}$. In other words, the operator $G_p^q$ turns a function of $q$ variables into a function of $p$ variables. We can now re-write \eqref{eqn:multiplication} in a simplified form as
\[
I_q(f)^2 = \sum_{p=0}^{2q} I_p(G_p^q f)
\]
with $I_0(G_0^q f) = G_0^q f =  q!\|f\|^2$.

The multiplication formula paves the way for the computation of moments of multiple stochastic integrals. In particular, we have the following expressions for the third and the fourth moment of a multiple Poisson integral.

\begin{lemma}[Third and fourth moment of Poisson integrals]\label{lemma:Third and fourth moment}
Fix an integer $q\ge1$. Let $f \in L_s^2(\mu^q)$ such that the technical assumptions (A) are satisfied. Then $I_q(f) \in L^4(\PP)$. Moreover, we have that
\begin{align}
\label{eqn:3moment}
\EE[I_q(f)^3] &= q! \sum_{r=0}^q \sum_{\ell=0}^r \one(q=r+\ell) r! \binom qr^2 \binom r\ell  \lan f \,\tld \star_{r}^{\ell} f,f\, \ran, 
\\
\EE[I_q(f)^4] &= \sum_{p=0}^{2q} p! \|G_p^q\, f \|^2.
\label{eqn:4moment}
\end{align}
\end{lemma}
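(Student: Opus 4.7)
The plan is to apply the multiplication formula (Lemma \ref{lemma:multiplication}) with $g = f$, and then combine the resulting chaotic expansion of $I_q(f)^2$ with the It\^o isometry \eqref{eqn:isometry}.

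First, I would verify that $I_q(f) \in L^4(\PP)$, equivalently that $I_q(f)^2 \in L^2(\PP)$. Under the technical assumptions (A), every contraction $f\star_r^\ell f$ (and hence its canonical symmetrization $f\,\tld\star_r^\ell f$) lies in $L^2(\mu^{2q-r-\ell})$, so Lemma \ref{lemma:multiplication} applies and yields the finite chaotic expansion
\[
I_q(f)^2 = \sum_{p=0}^{2q} I_p(G_p^q f),
\]
with $G_p^q f$ defined in \eqref{eqn:G_p^q}. Each summand is a well-defined element of $L^2(\PP)$, establishing $I_q(f) \in L^4(\PP)$.

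For the fourth moment \eqref{eqn:4moment}, I would square the above identity, take expectations, and use the orthogonality between multiple integrals of different orders provided by \eqref{eqn:isometry}. Since every $G_p^q f$ is already a linear combination of symmetrized contractions, it is symmetric, and the It\^o isometry gives $\EE[I_p(G_p^q f)^2] = p!\,\|G_p^q f\|^2$ while all cross terms with $p\neq p'$ vanish. Summing over $p$ produces exactly \eqref{eqn:4moment}. For the third moment \eqref{eqn:3moment}, I would write $\EE[I_q(f)^3] = \EE[I_q(f)^2 \cdot I_q(f)]$, substitute the same expansion, and invoke \eqref{eqn:isometry} to keep only the term $p=q$, obtaining $\EE[I_q(f)^3] = q!\,\lan G_q^q f, f\ran$. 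Unfolding the definition \eqref{eqn:G_p^q}, the operator $G_q^q$ collects precisely those pairs $(r,\ell)$ with $2q-r-\ell = q$, i.e.\ $r+\ell = q$, which immediately yields the stated formula.

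The only genuine care required is in the $L^2$-bookkeeping at the very start: the technical assumptions (A) must be invoked to ensure that every contraction appearing in the multiplication formula lies in the appropriate $L^2$-space, so that both the chaotic decomposition of $I_q(f)^2$ and the termwise application of the It\^o isometry are justified. Once this is secured, the rest is a straightforward combination of Lemma \ref{lemma:multiplication} and \eqref{eqn:isometry}, with no further obstacle.
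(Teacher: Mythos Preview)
Your proposal is correct and follows exactly the approach of the paper: invoke the technical assumptions (A) to guarantee that all contractions $f\,\tld\star_r^\ell f$ lie in the right $L^2$-spaces, then combine the multiplication formula \eqref{eqn:multiplication} with the It\^o isometry \eqref{eqn:isometry}. Your write-up is in fact more detailed than the paper's own proof, which dispatches the lemma in two sentences.
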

\begin{proof}
The technical assumptions (A) ensure that all symmetrized contraction kernels $f \,\tld\star_r^\ell f $ appearing in \eqref{eqn:3moment} and \eqref{eqn:4moment} are elements of $L^2(\mu^{2q - r - \ell})$, which implies that the third and the fourth moment of $I_q(f)$ are finite. The explicit formulae in \eqref{eqn:3moment} and \eqref{eqn:4moment} follow directly from the isometry property \eqref{eqn:isometry} and the multiplication formula \eqref{eqn:multiplication}.
\end{proof}

\begin{remark}
Note that for even $q\ge2$, \eqref{eqn:3moment} reduces to
\be{eqn:3moment even}
\EE[I_q(f)^3] = q! \sum_{r=q/2}^q  r! \binom qr^2 \binom{r}{q-r}  \lan f \,\tld \star_{r}^{q-r} f,f\, \ran\,.
\ee
\end{remark}

\begin{remark}\label{remark:multiplication Gaussian}
There is also a multiplication formula for the Gaussian case.
It reads 
\[
I_p^G(f) I_q^G(g) = \sum_{r=0}^{p\wedge q} r! \binom pr \binom qr  I_{p+q - 2r}^G (f\,\tld\star_r^r g),
\]
where $p,q\ge1$ and $f\in L^2_s(\mu^p)$, $g\in L^2_s(\mu^q)$, see \cite[Proposition 6.4.1]{PeccatiTaqquBook}. As a consequence, we see that the third and fourth moment of a Gaussian multiple integral have a more compact form compared to the Poisson case. Indeed, for an integer $q\ge1$ and $f\in L^2_s(\mu^q)$, one has that 
\begin{align}
\label{eqn:3moment Gaussian}
\EE[I_q^G(f)^3] &= \frac{(q!)^3}{(q/2!)^2}  \lan f \,\tld \star_{q/2}^{q/2} f,f\, \ran\, \one(q \text{ is even})\,,
\\
\nonumber
\EE[I_q^G(f)^4] &= \sum_{r=0}^q (r!)^2 \binom{q}{r}^4 (2q -2r)! \| f\,\tld\star_r^r f\|^2\,.
\end{align}
In particular, the third moment of a Gaussian integral of odd order vanishes, while this is in general not the case for a Poisson integral.
\end{remark}

\section{Four moments theorems}\label{sec:4Moments}

This section contains the main results of our paper, namely a four moments theorem for Gamma approximation on a Poisson chaos of fixed order. To allow for an easier comparison with the existing literature, we first recall known results on a Gaussian chaos and also a version of the four moments theorem for normal approximation on a Poisson chaos.

\subsection{Four moments theorems on a Gaussian chaos}

The classical method of moments yields a central limit theorem for a normalized sequence of random variables under the condition that all moments converge to those of the standard Gaussian distribution. The four moments theorem on a Gaussian chaos is a drastical simplification of the method of moments as it provides a central limit theorem for a sequence of normalized Gaussian multiple stochastic integrals under the much weaker condition that only the fourth moment converges to $3$ (which is the fourth moment of the standard Gaussian distribution). Alternatively, this statement can be re-formulated in terms of the convergence of norms of contractions. 
In what follows we write $X\sim\cL$ if a random variable $X$ has distribution $\cL$.

\begin{theorem}[see Theorem 1 in \cite{NualartPeccati2005}]\label{thm:GaussianNormal}
Fix an integer $q\ge2$ and let $\{f_n\colon n\ge1\}\subset L_s^2(\mu^q)$ be such that 
\[
\lim_{n\to\infty} q! \|f_n\|^2 = \lim_{n\to\infty}\EE[I_q^G(f_n)^2] = 1\,.
\] 
Further, let $N\sim\cN(0,1)$ be a standard Gaussian random variable.
Then the following three assertions are equivalent:
\begin{enumerate}[(i)]
\item
As $n\to\infty$, the sequence $\{I_q^G(f_n)\colon n\ge1\}$ converges in distribution to $N$;
\item
$\lim\limits_{n\to\infty} \EE[I_q^G(f_n)^4] =3$;
\item
$\lim\limits_{n\to\infty} \|f_n \star_r^r f_n \|=0$ for every $r\in\{1, \ldots, q-1\}$.
\end{enumerate}
\end{theorem}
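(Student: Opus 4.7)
The plan is to prove the equivalences via the chain $(ii) \Leftrightarrow (iii) \Rightarrow (i) \Rightarrow (ii)$, using the Gaussian multiplication formula from Remark \ref{remark:multiplication Gaussian} as the main algebraic engine.

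First I would identify the gap $\EE[I_q^G(f_n)^4] - 3\,\EE[I_q^G(f_n)^2]^2$ with the off-diagonal contractions. Applying the Gaussian multiplication formula to $I_q^G(f_n)^2$ and then expanding the square via the It\^o isometry \eqref{eqn:isometry} yields
\[
\EE[I_q^G(f_n)^4] = \sum_{r=0}^{q} (r!)^2 \binom{q}{r}^4 (2q-2r)!\, \|f_n \,\tld\star_r^r f_n\|^2 .
\]
The $r=q$ term contributes $(q!)^2\|f_n\|^4$; an orbit-counting analysis of the action of $\Pi_{2q}$ on the tensor product $f_n\otimes f_n$ further decomposes the $r=0$ contribution as $2(q!)^2\|f_n\|^4$ plus a non-negative combination of $\|f_n \star_r^r f_n\|^2$ for $r\in\{1,\ldots,q-1\}$. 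The two boundary pieces therefore reproduce exactly $3(q!)^2\|f_n\|^4 = 3\EE[I_q^G(f_n)^2]^2$, leaving
\[
\EE[I_q^G(f_n)^4] - 3\,\EE[I_q^G(f_n)^2]^2 = \sum_{r=1}^{q-1} c_{q,r}\, \|f_n \star_r^r f_n\|^2
\]
with strictly positive constants $c_{q,r}$. Since Jensen's inequality combined with an elementary bound in the reverse direction makes $\|f_n\star_r^r f_n\|$ and $\|f_n \,\tld\star_r^r f_n\|$ equivalent up to dimension-dependent constants, and since $q!\|f_n\|^2\to 1$ by assumption, the equivalence $(ii)\Leftrightarrow(iii)$ is immediate from this identity.

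For $(iii)\Rightarrow (i)$ I would invoke the method of moments. Iteration of the multiplication formula writes $\EE[I_q^G(f_n)^m]$ as a finite sum indexed by pairing diagrams on $m$ copies of $f_n$, each summand being a product of norms of contractions $f_n \star_r^r f_n$ with $r\in\{0,\ldots,q\}$. Under (iii) every diagram containing at least one non-trivial contraction ($1\le r \le q-1$) vanishes in the limit, and only the fully paired Gaussian-type diagrams survive; an elementary count verifies that these sum to $(m-1)!!\,\one(m\ \textup{even})$, i.e.\ the moments of $N$. Since the Gaussian law is determined by its moments, convergence in distribution follows. The reverse direction $(i)\Rightarrow (ii)$ rests on hypercontractivity of the Ornstein--Uhlenbeck semigroup on the fixed Wiener chaos $W_q^G$: all $L^p$-norms, $p\in(1,\infty)$, are mutually equivalent there, so the $L^2$-bounded sequence $\{I_q^G(f_n)\}$ is automatically bounded in every $L^p$. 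Combined with convergence in distribution to the integrable limit $N$, this delivers uniform integrability of $\{I_q^G(f_n)^4\}$ and hence convergence of the fourth moment to $\EE[N^4]=3$.

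The main obstacle is the positivity assertion in the first step: what is truly delicate is that, after cancellation of the two boundary terms, the remainder is a genuinely non-negative linear combination of the squared contraction norms $\|f_n \star_r^r f_n\|^2$. This is the combinatorial heart of \cite{NualartPeccati2005} and requires a careful bookkeeping of how the symmetrization of $f_n\otimes f_n$ interacts with the permutation group $\Pi_{2q}$. All remaining steps are standard once this identity has been secured.
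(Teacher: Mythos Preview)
The paper does not supply its own proof of Theorem~\ref{thm:GaussianNormal}; the result is quoted from \cite{NualartPeccati2005} purely as background for comparison with the Poisson case. So there is no ``paper's proof'' to match against, and your outline should be judged on its own merits.

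Your scheme is essentially the modern combinatorial route (as in \cite{Nourdin2012Book}) rather than the original argument of \cite{NualartPeccati2005}, which established $(iii)\Rightarrow(i)$ via the Dambis--Dubins--Schwarz random time change. Your decomposition of the fourth moment is correct, and the identification of the positivity of the remainder as the crux is exactly right: once one applies identity \eqref{eqn:0-contraction} of Lemma~\ref{lemma:symmetrization} to the $r=0$ summand, the quantity $\EE[I_q^G(f_n)^4]-3(q!)^2\|f_n\|^4$ becomes a sum of two manifestly non-negative pieces, one of which already involves the \emph{unsymmetrized} norms $\|f_n\star_p^p f_n\|^2$. This immediately yields $(ii)\Rightarrow(iii)$, and the converse needs only the trivial Jensen bound $\|f_n\,\tld\star_r^r f_n\|\le\|f_n\star_r^r f_n\|$.

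One imprecision worth flagging: your sentence ``Jensen's inequality combined with an elementary bound in the reverse direction makes $\|f_n\star_r^r f_n\|$ and $\|f_n\,\tld\star_r^r f_n\|$ equivalent up to dimension-dependent constants'' is not correct as stated. No two-sided equivalence holds in general; symmetrization can genuinely annihilate mass (cf.\ Remark~\ref{remark:sigma} and Lemma~\ref{lemma:reverse inequality}, where a sign condition is needed even for a one-sided reverse bound). Fortunately you do not need the reverse direction at all, for the reason just given. Your method-of-moments argument for $(iii)\Rightarrow(i)$ via diagram formulae is valid but heavier than the alternatives; the hypercontractivity step for $(i)\Rightarrow(ii)$ is standard and correctly invoked.
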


In the subsequent work \cite{NourdinPeccati2009a}, the authors have shown a `non-central' version of Theorem \ref{thm:GaussianNormal} where the limiting distribution is a centred Gamma distribution. To state the result properly, let us recall the formal definition of the latter limit law.

\begin{definition}[Centred Gamma distribution]\label{def:Centred Gamma distribution}
A random variable $Y$ has a centred Gamma distribution $\overline\G_\nu$ with parameter $\nu>0$, if 
\[
Y \stackrel{d}{=} 2X - \nu,
\]
where $X$ has the usual Gamma law with mean and variance both equal to $\nu/2$ and where $\stackrel{d}{=}$ stands for equality in distribution. The probability density of $\overline\G_\nu$ is given by
\[
g_\nu(x) = \frac{2^{-\nu/2}}{\G(\nu/2)}(x+\nu)^{\nu/2 - 1} e^{-(x+\nu)/2} \one(x>-\nu),
\]
and the the first four moments of $Y$ are
\begin{align*}
\EE[Y] &= 0\,,\quad \EE[Y^2] = 2\nu\,,\quad \EE[Y^3] = 8\nu\,, \quad \EE[Y^4] = 12\nu^2 + 48\nu\,.
\end{align*}
\end{definition}


We are now in the position to re-phrase the following non-central analogue of Theorem \ref{thm:GaussianNormal}.

\begin{theorem}[see Theorem 1.2 in  \cite{NourdinPeccati2009a}]\label{thm:Gaussian-Gamma}
Let $\nu>0$ and fix an even integer $q\ge2$. Let $\{f_n\colon n\ge1\}\subset L_s^2(\mu^q)$ be such that 
\[
\lim_{n\to\infty} q! \|f_n\|^2 = \lim_{n\to\infty} \EE[I_q^G(f_n)^2] = 2\nu\,.
\] 
Further, let $Y\sim\overline\G_\nu$ be a centred Gamma-distributed random variable with parameter $\nu$.
Then the following three assertions are equivalent:
\begin{enumerate}[(i)]
\item
As $n\to\infty$, the sequence $\{I_q^G(f_n)\colon n\ge1\}$ converges in distribution to $Y$;
\item
$\lim\limits_{n\to\infty} \EE[I_q^G(f_n)^4]  - 12\EE[I_q^G(f_n)^3]= 12\nu^2 - 48\nu$;
\item
$\lim\limits_{n\to\infty}\|f_n \star_r^r f_n\| = 0$ for every $r \in \{ 1, \ldots, q-1\}\setminus\{q/2\}$,  and\\
$\lim\limits_{n\to\infty}\|f_n \,\tld\star_{q/2}^{q/2} f_n - c_q \,f_n\| = 0$ with $c_q = \frac{4}{(q/2)! \binom{q}{q/2}^2}$.
\end{enumerate}
\end{theorem}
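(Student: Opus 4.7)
The plan is to prove the cycle (i) $\Rightarrow$ (ii) $\Rightarrow$ (iii) $\Rightarrow$ (i) following the Malliavin--Stein template of \cite{NourdinPeccati2009a}. The implication (i) $\Rightarrow$ (ii) is the classical easy direction: hypercontractivity on the fixed $q$-th Gaussian Wiener chaos makes all $L^p$-norms equivalent to the $L^2$-norm, so that tightness together with the standing normalization $\EE[I_q^G(f_n)^2]\to 2\nu$ lifts convergence in distribution to convergence of every integer moment. The third and fourth moments of $I_q^G(f_n)$ therefore converge to $\EE[Y^3]=8\nu$ and $\EE[Y^4]=12\nu^2+48\nu$, which immediately delivers (ii).

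For the equivalence (ii) $\Leftrightarrow$ (iii), the strategy is to insert the closed-form moment expressions from Remark \ref{remark:multiplication Gaussian} and to rearrange
\[
\EE[I_q^G(f_n)^4]-12\,\EE[I_q^G(f_n)^3]-(12\nu^2-48\nu)
\]
as a non-negative linear combination of $\|f_n\,\tld\star_r^r f_n\|^2$ for $r\in\{1,\ldots,q-1\}\setminus\{q/2\}$ together with $\|f_n\,\tld\star_{q/2}^{q/2}f_n-c_q f_n\|^2$. The index $r=q/2$ must be singled out because it is the unique summand in the fourth-moment expansion that also couples with the third-moment contribution $\lan f_n\,\tld\star_{q/2}^{q/2}f_n,f_n\ran$; completing the square at this level is precisely what produces the shift by $c_q f_n$ and fixes the value $c_q=4/((q/2)!\binom{q}{q/2}^2)$. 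Non-negativity of each summand then makes the vanishing of the left-hand side equivalent to the simultaneous vanishing of every contraction norm, giving both directions of the equivalence at once.

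The substantive implication (iii) $\Rightarrow$ (i) rests on the Stein characterization of $\overline\G_\nu$, which reads $\EE[2(Y+\nu)\psi'(Y)-Y\psi(Y)]=0$ for sufficiently smooth test functions $\psi$. For $F_n=I_q^G(f_n)$ the Gaussian Malliavin integration-by-parts formula combined with $-DL^{-1}F_n=\tfrac{1}{q}DF_n$ on the $q$-th chaos yields $\EE[F_n\psi(F_n)]=\tfrac{1}{q}\EE[\psi'(F_n)\|DF_n\|^2]$, so that a standard Stein bound reduces the distance between the law of $F_n$ and $\overline\G_\nu$ to $\bigl\|\tfrac{1}{q}\|DF_n\|^2-2F_n-2\nu\bigr\|_{L^2(\PP)}$. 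Expanding $\tfrac{1}{q}\|DF_n\|^2$ chaotically by means of the Gaussian product formula produces multiple integrals over the even chaoses $0,2,\ldots,2q-2$ whose kernels are proportional to $f_n\,\tld\star_s^s f_n$: the zeroth level contributes $q!\|f_n\|^2\to 2\nu$, the $q$-th level kernel matches a constant multiple of $f_n$ up to the correction $c_q$, and the remaining levels are controlled by $f_n\,\tld\star_s^s f_n$ for $s\in\{1,\ldots,q-1\}\setminus\{q/2\}$. By the It\^o isometry and chaos orthogonality the relevant $L^2$-norm becomes exactly the weighted sum of squared contraction norms present in (iii) and therefore vanishes. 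The main obstacle is the combinatorial identification of the constants in the second and third steps: one has to verify that the same $c_q$ emerges from the moment rearrangement and from the chaos expansion of $\tfrac{1}{q}\|DF_n\|^2-2F_n-2\nu$, and this coincidence is the algebraic heart of the theorem.
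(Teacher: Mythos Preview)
The paper does not prove Theorem~\ref{thm:Gaussian-Gamma}; it is quoted from \cite{NourdinPeccati2009a} as background and no argument is supplied here, so there is no in-paper proof to compare against. Your sketch follows the Malliavin--Stein route of that reference and is essentially correct.

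One point worth flagging: in your (ii)\,$\Leftrightarrow$\,(iii) step you decompose the moment expression as a non-negative combination of the \emph{symmetrized} norms $\|f_n\,\tld\star_r^r f_n\|^2$, whereas assertion (iii) is formulated with the \emph{non-symmetrized} $\|f_n\star_r^r f_n\|$ for $r\neq q/2$. The passage from one to the other is not automatic (only the direction $\|\tld g\|\le\|g\|$ is trivial); it relies on an identity of the type \eqref{eqn:0-contraction}, which injects the non-symmetrized norms $\|f_n\star_p^p f_n\|^2$ into the fourth-moment expansion through the term $(2q)!\,\|f_n\,\tld\star_0^0 f_n\|^2$. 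You should make this step explicit rather than fold it silently into the ``rearrangement''.
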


It is a characterizing feature of the centred Gamma-distribution that the so-called `middle-contraction' $f_n\star_{q/2}^{q/2}f_n$ plays a special role in condition (iii). The fact that the middle-contraction does not vanish goes hand in hand with the appearance of the third moment in condition (ii), recall \eqref{eqn:3moment Gaussian}.

%

\subsection{Four moments theorems on a Poisson chaos}\label{subsec:$4$th-moment theorems in the Poisson chaos}

We now turn to four moments theorems on a Poisson chaos of fixed order $q\geq 2$. To this end, let, for each $n\geq 1$, $\mu_n$ be a $\s$-finite non-atomic measure on $(\cZ,\sZ)$ and denote by $\hat{\eta}_n$ a compensated Poisson random measure with control $\mu_n$. Further let $\{f_n:n\geq 1\}$ be a sequence of symmetric function such that $f_n$ is square-integrable with respect to $\mu_n^q$ for each $n\geq 1$. In this set-up, $\|f_n\|$ denotes the norm of $f_n$ with respect to $\mu_n^q$, and $f_n\star_r^\ell f_n$ stands for the contraction taken with respect to $\mu_n$. Finally, define $F_n=I_q(f_n)$, where for each $n$ the stochastic integral is with repsect to $\hat \eta_n$.

As in the Gaussian case discussed in the previous section, we start with the case of a standard normal limiting distribution.

\begin{theorem}[see Theorem 3.12 in \cite{Lachieze-ReyPeccati2013a}]\label{thm:PoissonNormal}
Let $\{\mu_n:n\geq 1\}$ be a sequence of $\s$-finite and non-atomic measures such that $\lim\limits_{n\to\infty}\mu_n(\cZ)=\infty$ and fix $q\geq 2$. Let $\{f_n\colon n\ge1\} \subset L_s^2(\mu_n^q)$ be a sequence such that for each $n\ge1$ either $f_n\geq 0$ or $f_n\leq 0$. Suppose that the technical assumptions (A) and the normalization condition
\be{eqn:normalization}
\lim_{n\to\infty} q! \|f_n\|^2 = \lim_{n\to\infty}\EE[I_q(f_n)^2] = 1
\ee
are satisfied. Further, suppose that $\{I_q(f_n)^4:n\geq 1\}$ is uniformly integrable and let $N\sim\cN(0,1)$ be a standard Gaussian random variable. Then the following three assertions are equivalent:
\begin{enumerate}[(i)]
\item As $n\to\infty$, the sequence $\{I_q(f_n):n\geq 1\}$  converges in distribution to $N$;
\item $\lim\limits_{n\to\infty}\EE[I_q(f_n)^4]=3$;
\item $\lim\limits_{n\to\infty}\|f_n\star_r^\ell f_n\|=0$ for all $r\in \{1,\ldots, q\}$ and $\ell \in\{1, \ldots, r \wedge (q-1)\}$, and\\ $\lim\limits_{n\to\infty}\|f_n \|_{L^4(\mu^q)}=0$.
\end{enumerate}
\end{theorem}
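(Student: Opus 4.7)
My plan is to establish the equivalence via the cyclic chain $(i) \Rightarrow (ii) \Rightarrow (iii) \Rightarrow (i)$.

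The implication $(i)\Rightarrow(ii)$ is a soft consequence of the hypotheses: convergence in distribution together with uniform integrability of $\{I_q(f_n)^4\}$ upgrades to $L^1$ convergence of every moment up to order four, so $\EE[I_q(f_n)^4]\to \EE[N^4]=3$.

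For $(iii)\Rightarrow(i)$ I would appeal to the Malliavin--Stein machinery on the Poisson space developed by Peccati, Sol\'e, Taqqu and Utzet. Combined with the multiplication formula from Lemma~\ref{lemma:multiplication}, this yields for any $f\in L_s^2(\mu^q)$ satisfying the technical assumptions (A) a Wasserstein-type bound of the shape
\[
d_W(I_q(f), N) \leq C_q \Bigl(|1-q!\|f\|^2| + \|f\|_{L^4(\mu^q)}^2 + \sum_{r=1}^{q}\sum_{\ell=1}^{r\wedge(q-1)} \|f\star_r^\ell f\|\Bigr).
\]
Under \eqref{eqn:normalization} and assumption (iii) every summand on the right tends to $0$, so $I_q(f_n) \to N$ in distribution.

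The delicate implication is $(ii)\Rightarrow(iii)$, and this is where the sign hypothesis on $f_n$ plays a decisive role. By Lemma~\ref{lemma:Third and fourth moment},
\[
\EE[I_q(f_n)^4] = \sum_{p=0}^{2q} p!\,\|G_p^q f_n\|^2,
\]
while $\EE[I_q(f_n)^2]^2 = (q!)^2\|f_n\|^4$. Expanding the squared norms $\|G_p^q f_n\|^2$ through \eqref{eqn:G_p^q} produces a large collection of scalar products $\lan f_n\,\tld\star_r^\ell f_n,\, f_n\,\tld\star_{r'}^{\ell'} f_n\ran$. The strategy is to isolate the ``diagonal'' terms, whose combinatorial weights combine with the $p=0$ contribution to reproduce $3(q!)^2 \|f_n\|^4$ in the limit, and to rewrite the remainder as a combination of squared contraction norms $\|f_n\star_r^\ell f_n\|^2$ together with $\|f_n\|_{L^4(\mu_n^q)}^4$. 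Under $f_n\geq 0$ (or $f_n\leq 0$), every contraction $f_n\star_r^\ell f_n$ is pointwise non-negative; consequently each cross term $\lan f_n\,\tld\star_r^\ell f_n,\, f_n\,\tld\star_{r'}^{\ell'} f_n\ran$ is itself non-negative, and one may bound $\EE[I_q(f_n)^4] - 3\EE[I_q(f_n)^2]^2$ from below by a non-negative linear combination of the squared contraction norms listed in (iii) and of $\|f_n\|_{L^4(\mu_n^q)}^4$. Then $(ii)$ together with the normalization forces this expression to vanish in the limit, and hence every individual term does as well.

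The principal obstacle I foresee is this last step: carrying out the algebraic bookkeeping of the fourth-moment expansion so that, after exploiting pointwise positivity of the contractions of $f_n$, every remaining term has a manifestly non-negative sign. Without the sign hypothesis, off-diagonal contractions of different orders can enter the expansion with coefficients of mixed sign and genuine cancellations become possible; this is precisely why the sign condition on $f_n$ is indispensable in the statement.
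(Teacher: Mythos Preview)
The paper does not contain its own proof of this theorem: it is quoted verbatim from \cite{Lachieze-ReyPeccati2013a} (Theorem~3.12 there) and serves only as background against which Theorem~\ref{thm:main_theorem} is to be compared. There is therefore no ``paper's proof'' to match; I can only assess your outline on its own merits and against the original argument of Lachi\`eze-Rey and Peccati.

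Your cyclic scheme $(i)\Rightarrow(ii)\Rightarrow(iii)\Rightarrow(i)$ and the treatment of the two easy implications are correct and coincide with the standard route. For the hard implication $(ii)\Rightarrow(iii)$ your plan is also the right one and matches \cite{Lachieze-ReyPeccati2013a}: expand $\EE[I_q(f_n)^4]$ via the multiplication formula, use identity~\eqref{eqn:0-contraction} to extract the contribution $3(q!)^2\|f_n\|^4$ from the $p=0$ and $p=2q$ terms, and then exploit the sign hypothesis to bound the remainder from below by a positive combination of contraction norms.

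One point is glossed over. Expanding $\|G_p^q f_n\|^2$ and dropping the (non-negative) cross terms leaves the \emph{symmetrized} diagonal pieces $\|f_n\,\tld\star_r^\ell f_n\|^2$, not the unsymmetrized norms $\|f_n\star_r^\ell f_n\|^2$ that appear in (iii). Since $\|\tld g\|\le\|g\|$, this is a priori the wrong direction. The missing observation is that for a non-negative $g\in L^2(\mu^p)$ one has
\[
\|\tld g\|^2=\frac{1}{p!}\sum_{\pi\in\Pi_p}\lan g, g(\pi)\ran\ \ge\ \frac{1}{p!}\|g\|^2,
\]
because every summand is non-negative and the identity permutation contributes $\|g\|^2$; applying this with $g=f_n\star_r^\ell f_n\ge 0$ closes the gap. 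This is precisely the mechanism behind Lemma~\ref{lemma:reverse inequality} in the present paper (stated there for $\ell=r$), and the same idea is used in \cite{Lachieze-ReyPeccati2013a}. With this one addition your sketch is complete.
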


Let us comment on the differences between Theorem \ref{thm:GaussianNormal} and Theorem \ref{thm:PoissonNormal}. 
\begin{itemize}
\item[(1)] In the Poisson case one has to ensure that the involved control measures are infinite measures, at least in the limit, as $n\to\infty$. The reason for this is that otherwise, the normalization \eqref{eqn:normalization} and the condition that $\lim\limits_{n\to\infty}\|f_n \|_{L^4(\mu^q)}=0$ are mutually exclusive, see also the remark after Assumption N in \cite{PeccatiTaqqu2008} for a brief discussion of this problem.

\item[(2)] One has to assume that the functions $f_n$ have a constant sign, that is for each $n\ge1$ either $f_n\geq 0$ or $f_n\leq 0$. The reason for this is that in the Poisson case, besides of the contraction norms $\|f_n\star_r^\ell f_n\|$, also scalar products of the form $\lan f_n\star_{r_1}^{\ell_1}f_n,f_n\star_{r_2}^{\ell_2}f_n\ran$ enter the expression of the fourth moments $\EE[I_q(f_n)^4]$. The sign condition then allows to control the signs of these scalar products, which rules out cancellation effects.

\item[(3)] In the Poisson case, one also has to assume that the sequence $\{I_q(f_n)^4\colon n\ge 1\}$ is uniformly integrable, while in the Gaussian case, this condition is automatically fulfilled thanks to the hypercontractivity property of Gaussian integrals (see e.g. \cite[Theorem 2.7.2]{Nourdin2012Book}). This is needed to ensure that the convergence in distribution of $I_q(f_n)$ to $N$ implies the convergence of the first four moments.
\end{itemize}

For general $q\geq 2$ and general sequences $\{f_n\colon n\geq 1\}\subset L_s^2(\mu_n^q)$ there is no version of a four moments theorem on a Poisson chaos relaxing one of the conditions discussed above. However, for $q=2$ the sign condition is not necessary as shown by Theorem 2 in \cite{PeccatiTaqqu2008}. Moreover, for general $q\geq 2$ and if the sequence $\{f_n:n\geq 1\}$ is \textit{tamed} (see Definition \ref{def:tamed sequences} below), Theorem 3.2 in \cite{PeccatiZheng2013} provides a four moments theorem without a sign condition. In this case, also condition (iii) can be relaxed by assuming -- besides the condition on the $L^4$-norm of $f_n$ -- only that $\lim\limits_{n\to\infty}\|f_n\star_r^rf_n\|=0$ for all $r\in\{1,\ldots,q-1\}$.

After having discussed the four moments theorem for normal approximation on the Poisson space, we now turn to the main result of the present work, namely a version of Theorem \ref{thm:Gaussian-Gamma} for Poisson integrals of order $q=2$ and $q=4$. The reason for this rather restrictive condition on the order of the involved integrals will be discussed below.

\begin{theorem}[Four moments theorem for Poisson integrals] \label{thm:main_theorem}
Fix $\nu>0$. Let $q\geq 2$ be even and $\{f_n\colon n\ge1\} \subset L_s^2(\mu_n^q)$ be a sequence satisfying the technical assumptions (A) and the normalization condition
\[
\lim_{n\to\infty} q!\|f_n\|^2 = \lim_{n\to\infty}\EE[I_q(f_n)^2] = 2\nu\,.
\]
Furthermore, let the sequence $\{I_q(f_n)^4\colon n\ge1\}$ be uniformly integrable and let $Y\sim\overline\G_\nu$ be a random variable following a centred Gamma distribution with parameter $\nu$. If one of the conditions
\begin{enumerate}
\item
$q=2$ and $\lim\limits_{n\to\infty} \|f_n^2\|=0$,
\item
$q=4$ and $f_n\le 0$ for all $n\ge1$
\end{enumerate}
is satisfied, then the following three assertions are equivalent:
\begin{enumerate}[(i)]
\item
As $n\to\infty$, the sequence $\{I_q(f_n)\colon n\ge1\}$ converges in distribution to $Y$;
\item
$\lim\limits_{n\to\infty} \EE[I_q(f_n)^4]  - 12\EE[I_q(f_n)^3]= 12\nu^2 - 48\nu$;
\item
$\lim\limits_{n\to\infty}\|f_n \star_r^\ell f_n\|=0$ for all $r\in \{1,\ldots, q\}$ and $\ell \in\{1, \ldots, r \wedge (q-1)\}$ such that	$(r,\ell)\neq (q/2,q/2),$
$\lim\limits_{n\to\infty}\|f_n \|_{L^4(\mu_n^q)}=0$, and 
$\lim\limits_{n\to\infty}\|f_n \,\tld\star_{q/2}^{q/2} f_n - c_q \,f_n\| = 0$ with $c_q = \frac{4}{(q/2)! \binom{q}{q/2}^2}$.
\end{enumerate}
\end{theorem}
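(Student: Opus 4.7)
The plan is to establish the cycle (i) $\Rightarrow$ (ii) $\Rightarrow$ (iii) $\Rightarrow$ (i). The implication (i) $\Rightarrow$ (ii) is immediate: combining convergence in distribution with the uniform integrability of $\{I_q(f_n)^4\colon n\ge 1\}$ gives convergence of the first four moments of $I_q(f_n)$ to those of $Y\sim\overline\G_\nu$ listed in Definition \ref{def:Centred Gamma distribution}, and a direct computation then yields $\EE[Y^4]-12\,\EE[Y^3] = 12\nu^2+48\nu-96\nu = 12\nu^2-48\nu$.

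For (iii) $\Rightarrow$ (i), I would apply the Stein-Malliavin bounds for centred Gamma approximation on the Poisson space developed in \cite{PeccatiThaeleGamma}: these estimate a smooth-test-function distance between $I_q(f_n)$ and $\overline\G_\nu$ by a linear combination of the contraction norms $\|f_n\star_r^\ell f_n\|$ (with $(r,\ell)\neq(q/2,q/2)$), of $\|f_n\,\tld\star_{q/2}^{q/2} f_n - c_q f_n\|$, and of $\|f_n\|_{L^4(\mu_n^q)}$, and under (iii) each of these tends to zero.

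The crux is (ii) $\Rightarrow$ (iii). Using Lemma \ref{lemma:Third and fourth moment}, I would expand
\[
\EE[I_q(f_n)^4]-12\,\EE[I_q(f_n)^3] = \EE[I_q(f_n)^2]^2 + \sum_{p=1}^{2q} p!\,\|G_p^q f_n\|^2 - 12\,\EE[I_q(f_n)^3],
\]
develop each squared norm $\|G_p^q f_n\|^2$, via the definition \eqref{eqn:G_p^q}, into a sum of diagonal terms $\|f_n\,\tld\star_r^\ell f_n\|^2$ plus cross scalar products $\lan f_n\,\tld\star_{r_1}^{\ell_1} f_n, f_n\,\tld\star_{r_2}^{\ell_2} f_n\ran$, and simultaneously expand $\EE[I_q(f_n)^3]$ via \eqref{eqn:3moment even} (which applies since $q$ is even). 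The goal is then to show that
\[
\EE[I_q(f_n)^4]-12\,\EE[I_q(f_n)^3] = 12\nu^2-48\nu + o(1) + \alpha_q\,\bigl\|f_n\,\tld\star_{q/2}^{q/2} f_n - c_q f_n\bigr\|^2 + \sum_{(r,\ell)\neq (q/2,q/2)} \beta_{r,\ell}\,\|f_n\star_r^\ell f_n\|^2 + \gamma_q\,\|f_n\|^2_{L^4(\mu_n^q)},
\]
with explicitly non-negative constants $\alpha_q,\beta_{r,\ell},\gamma_q>0$; once such a representation is in place, hypothesis (ii) forces every summand on the right to vanish, which is exactly (iii). The sign assumption $f_n\le 0$ (in case $q=4$) enters precisely at this regrouping step: it guarantees that every cross scalar product left after extraction of the squares has the correct sign to be absorbed as a non-negative contribution. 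The hypothesis $\|f_n^2\|\to 0$ (in case $q=2$) plays the analogous role, but works because when $q=2$ the leftover cross terms are controlled by powers of $\|f_n^2\|=\|f_n\|_{L^4(\mu_n^2)}^2$ and hence contribute only $o(1)$.

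The main obstacle is exactly the sum-of-squares decomposition described above, which requires enumerating and sign-analysing every cross scalar product of contractions produced by the multiplication formula \eqref{eqn:multiplication}. For $q=2$ and $q=4$ the list is short enough to handle by hand and produces a number of new inequalities for contraction norms; for $q\ge 5$ the number of $(r_1,\ell_1,r_2,\ell_2)$-tuples grows too fast and no simple pointwise assumption on $f_n$ seems sufficient to rule out cancellations, which is the structural reason for the restriction on $q$ in the statement.
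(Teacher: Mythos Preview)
Your overall cycle (i)$\Rightarrow$(ii)$\Rightarrow$(iii)$\Rightarrow$(i) and the treatment of (i)$\Rightarrow$(ii) and (iii)$\Rightarrow$(i) match the paper's argument (the latter is Proposition~\ref{prop: Theorem 2.6}, which extends \cite{PeccatiThaeleGamma} to infinite control measures).

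For (ii)$\Rightarrow$(iii), however, the clean representation you aim for---an \emph{equality} with strictly positive constants $\beta_{r,\ell}$ in front of each $\|f_n\star_r^\ell f_n\|^2$---is more than the paper obtains and likely more than is true. What the paper actually proves (Lemma~\ref{lemma:A+R}) is only the asymptotic \emph{lower bound}
\[
\EE[I_q(f_n)^4]-12\,\EE[I_q(f_n)^3]\;\asymp\;12\nu^2-48\nu + A(I_q(f_n)) + R(I_q(f_n)),\qquad A(I_q(f_n))\ge A'(I_q(f_n)),
\]
where $A'$ is the explicit quadratic form \eqref{eqn:A'} and $R$ collects the cross scalar products. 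Two separate technical ingredients are then needed. First, Lemma~\ref{lemma:symmetrization} (an identity for $\|f\,\tld\star_0^0 f\|^2$ and an inequality for $\|f\,\tld\star_r^r f\|^2$ in terms of the non-symmetrized norms $\|f\star_p^p f\|^2$) is used in the proof of Lemma~\ref{lemma:A+R} itself: without it one cannot extract the middle-contraction square $\|c_q^{-1}f_n\,\tld\star_{q/2}^{q/2} f_n - f_n\|^2$ while keeping track of the remaining $\|f_n\star_p^p f_n\|^2$ terms. Second, even after this, $A'$ contains the sum
\[
\sum_{p=1}^{q/2-1}\frac{(q!)^4}{(p!)^2}\Bigl(\tfrac{2}{((q-p)!)^2}-\tfrac{1}{2((q/2)!(q/2-p)!)^2}\Bigr)\|f_n\star_p^p f_n\|^2,
\]
whose coefficients are \emph{not} automatically non-negative. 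The paper compensates via the reverse inequality of Lemma~\ref{lemma:reverse inequality} (requiring the sign condition) to borrow additional positive multiples of $\|f_n\star_p^p f_n\|^2$ from $\sum_{p\neq q}p!\|G_p^q f_n\|^2$; the resulting coefficient becomes $\frac{6}{((q-p)!)^2}-\frac{1}{2((q/2)!(q/2-p)!)^2}$, and the explicit check that this is positive is what singles out $q\in\{2,4\}$ (Lemma~\ref{lemma:A>=0}).

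Thus the obstruction for larger $q$ is not, as you write, that the list of cross terms becomes too long to inspect, but rather that for every even $q\ge 6$ this specific coefficient turns negative and the lower-bound argument collapses. Your description of how the hypotheses enter is correct in spirit---the sign condition in case~(b) makes $R\ge 0$, and in case~(a) the assumption $\|f_n^2\|\to 0$ forces $R\to 0$ via Cauchy--Schwarz---but the positivity of $A'$ is the delicate point you are missing.
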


\begin{remark}
Under condition (a), Theorem \ref{thm:main_theorem} is a version of Proposition 2.9 in \cite{PeccatiThaeleGamma}. However, in that paper one has to assume that for each $n\geq 1$ the reference measure $\mu_n$ is finite. As discussed earlier in this section, this is a quite restrictive assumption. We provide a proof which circumvents this technicality.
\end{remark}

The implication (i) $\Longrightarrow$ (ii) of Theorem \ref{thm:main_theorem} is a direct consequence of the uniform integrability assumption. That (iii) implies (i) follows from a generalization of Theorem 2.6 in \cite{PeccatiThaeleGamma} stated as Proposition \ref{prop: Theorem 2.6} below. Showing the implication (ii) $\Longrightarrow$ (iii) is the main part of the proof. While the proof of the corresponding implication in Theorem \ref{thm:PoissonNormal} is rather straight forward and works for arbitrary $q\geq 2$, the proof here is based on a couple of new estimates and arguments. They are of independent interest and might also be helpful beyond the context of the present paper. In sharp contrast to Theorem \ref{thm:PoissonNormal}, our arguments show that the `usual' technique (relying on the multiplication formula for Poisson integrals similar as in the proofs of Theorems \ref{thm:GaussianNormal}, \ref{thm:Gaussian-Gamma} or \ref{thm:PoissonNormal}) for proving the implication (ii) $\Longrightarrow$ (iii) only works in case that $q=2$ and $q=4$ and cannot be improved. The main reason for this is the involved combinatorial structure on a Poisson chaos implied by the multiplication formula \eqref{eqn:multiplication}. The proof of Theorem \ref{thm:main_theorem} is the content of Section \ref{sec:ProofMainThm} below.

Theorem \ref{thm:main_theorem} has a counterpart in a free probability setting, see \cite{Bourgin2013}. Here, one studies the approximation of the law of a sequence of elements belonging to a fixed chaos of order $q\geq 1$ of the so-called \textit{free} Poisson algebra by the Marchenko-Pastur law (also called free Poisson law). It is interesting to see that in this case, the proof works for arbitrary $q\ge1$ and does not need a sign condition on the kernels. This is explained by the relatively simple combinatorial structure on a free Poisson chaos, which is inherited from the free multiplication formula in which all combinatorial coefficients are equal to one. This causes that the expressions for the third and fourth moment are much simpler compared to the classical set-up of the present paper and implies that a proof of the corresponding free four moments theorem works in full generality.

Comparing Theorem \ref{thm:PoissonNormal} and Theorem \ref{thm:main_theorem}, it is natural to ask whether there exists a version of Theorem \ref{thm:main_theorem} dealing with a sequence of non-negative kernels. Indeed, Corollary \ref{cor:NonnegativeKernels} below provides such a version, but it deals with a different limiting law, namely what we call a centred reflected Gamma distribution. In case of a limiting Gaussian law, this phenomonon is not visible, since a Gaussian law is symmetric, see also the discussion in Remark \ref{remark:symmetry}.

\begin{definition}[Centred reflected Gamma distribution]
A random variable $Y$ has a centred reflected Gamma distribution $\widehat\G_\nu$ with parameter $\nu>0$, if $-Y\sim\overline \G_\nu$.
\end{definition}

Note that if $Y\sim\widehat \G_\nu$ follows a centred reflected Gamma distribution with parameter $\nu$, the first four moments of $Y$ are given by
\begin{align*}
\EE[Y] &= 0\,, \quad\EE[Y^2] = 2\nu\,, \quad\EE[Y^3] = -8\nu\,, \quad
\EE[Y^4] = 12\nu^2 + 48\nu\,.
\end{align*}
Moreover, while the centred Gamma distribution has support $[-\nu,\infty)$, the centred reflected Gamma distribution is supported on $(-\infty,\nu]$. The next result is an immediate consequence of Theorem \ref{thm:main_theorem} and the definition of $\widehat \G_\nu$.

\begin{corollary}[Four moments theorem for Poisson integrals with non-negative kernels] \label{cor:NonnegativeKernels}
Fix $\nu>0$. Let $q\geq 2$ be an even integer and $\{f_n\colon n\ge1\} \subset L_s^2(\mu_n^q)$ be a sequence of kernels satisfying the technical assumptions (A) and the normalization condition
\[
\lim_{n\to\infty} q!\|f_n\|^2 = \lim_{n\to\infty}\EE[I_q(f_n)^2] = 2\nu\,.
\]
Let the sequence $\{I_q^4(f_n)\colon n\ge1\}$ be uniformly integrable and suppose that $Y\sim\widehat\G_\nu$ is a random variable having a centred reflected Gamma distribution with parameter $\nu$. If one of the conditions
\begin{itemize}
\item[(a)] $q=2$ and $\lim\limits_{n\to\infty}\|f_n^2\|=0$,
\item[(b)] $q=4$ and $f_n\geq 0$ for all $n\geq 1$
\end{itemize}
is satisfied, then the following three assertions are equivalent:
\begin{enumerate}[(i)]
\item
As $n\to\infty$, the sequence $\{I_q(f_n)\colon n\ge1\}$ converges in distribution to $Y$;
\item
$\lim\limits_{n\to\infty} \EE[I_q(f_n)^4]  + 12\EE[I_q(f_n)^3]= 12\nu^2 - 48\nu$;
\item
$\lim\limits_{n\to\infty}\|f_n \star_r^\ell f_n\|=0$ for all $r\in \{1,\ldots, q\}$, $\ell \in\{1, \ldots, r \wedge (q-1)\}$ such that $(r,\ell)\neq (q/2,q/2)$,
$\lim\limits_{n\to\infty}\|f_n^2 \|=0$, and 
$\lim\limits_{n\to\infty}\|f_n \,\tld\star_{q/2}^{q/2} f_n + c_q \,f_n\| = 0$ with $c_q = \frac{4}{(q/2)! \binom{q}{q/2}^2}$.
\end{enumerate}
\end{corollary}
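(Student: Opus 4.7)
The plan is to derive Corollary~\ref{cor:NonnegativeKernels} directly from Theorem~\ref{thm:main_theorem} by a sign-flip of the kernels. Concretely, set $g_n := -f_n$ for every $n \ge 1$. Because all quantities entering the technical assumptions (A) and the normalization condition $q!\,\|f_n\|^2 \to 2\nu$ depend only on $|f_n|$ or $\|f_n\|^2$, they are preserved when passing from $f_n$ to $g_n$. Under assumption (b) one has $f_n \ge 0$, hence $g_n \le 0$, which matches hypothesis (b) of Theorem~\ref{thm:main_theorem}; under assumption (a) one has $q=2$ and $\|g_n^2\| = \|f_n^2\| \to 0$, which matches hypothesis (a) of that theorem.

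By linearity of the multiple stochastic integral, $I_q(g_n) = -I_q(f_n)$, so the sequence $\{I_q(g_n)^4 = I_q(f_n)^4\}$ is uniformly integrable and, by the very definition of $\widehat\G_\nu$, one has $I_q(f_n) \xrightarrow{d} Y \sim \widehat\G_\nu$ if and only if $I_q(g_n) \xrightarrow{d} -Y \sim \overline\G_\nu$. Applying Theorem~\ref{thm:main_theorem} to the sequence $\{g_n\}$ thus yields the equivalence of the three assertions of the corollary, provided we check that (ii) and (iii) of Theorem~\ref{thm:main_theorem} translate correctly into (ii) and (iii) of Corollary~\ref{cor:NonnegativeKernels}.

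For (ii) I would use $I_q(g_n)^4 = I_q(f_n)^4$ and $I_q(g_n)^3 = -I_q(f_n)^3$, so that
\[
\EE[I_q(g_n)^4] - 12\,\EE[I_q(g_n)^3] = \EE[I_q(f_n)^4] + 12\,\EE[I_q(f_n)^3],
\]
and the right-hand side matches condition (ii) of the corollary. For (iii) the key observation is that each contraction is bilinear, so $g_n \star_r^\ell g_n = (-f_n)\star_r^\ell (-f_n) = f_n \star_r^\ell f_n$, and similarly after symmetrization $g_n \,\tld\star_r^\ell g_n = f_n \,\tld\star_r^\ell f_n$. This makes all the off-middle contraction norms and the $L^4$-condition (equivalently $\|f_n^2\| \to 0$, since $\|f_n^2\|^2 = \|f_n\|_{L^4(\mu_n^q)}^4$) identical for $g_n$ and $f_n$. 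For the middle contraction, combining $g_n \,\tld\star_{q/2}^{q/2} g_n = f_n \,\tld\star_{q/2}^{q/2} f_n$ with $c_q g_n = -c_q f_n$ yields
\[
\bigl\|g_n \,\tld\star_{q/2}^{q/2} g_n - c_q\, g_n\bigr\| = \bigl\|f_n \,\tld\star_{q/2}^{q/2} f_n + c_q\, f_n\bigr\|,
\]
which is precisely the middle-contraction condition in (iii) of the corollary.

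Since everything reduces to a direct application of Theorem~\ref{thm:main_theorem}, there is no real obstacle here; the only point demanding any care is the bookkeeping of signs in conditions (ii) and (iii), and in particular recognising that the middle-contraction correction term $c_q\, f_n$ flips sign while the quadratic contraction $f_n \,\tld\star_{q/2}^{q/2} f_n$ does not. Apart from this, the argument is purely formal and requires no new analytic input.
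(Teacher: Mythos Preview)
Your proof is correct and is exactly the argument the paper has in mind: the corollary is stated as ``an immediate consequence of Theorem~\ref{thm:main_theorem} and the definition of $\widehat\Gamma_\nu$'', and your sign-flip $g_n=-f_n$ together with the bookkeeping of how (ii) and (iii) transform under this flip is precisely that immediate consequence spelled out in full.
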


\begin{remark}
We emphasize that one could derive our main result, Theorem \ref{thm:main_theorem}, also for the \textit{two-parametric} centred Gamma distribution $\overline \Gamma_{a,\lambda}$, $a,\lambda>0$, with probability density
\[
h_{a,\lambda}(x) = \frac{\lambda^{a}}{\G(a)}\big(x+\tfrac{a}{\lambda}\big)^{a - 1} e^{-(\lambda x+a)}\, \one\big(x>-\tfrac{a}{\lambda}\big).
\]
The one-parametric centred Gamma distribution $\overline \G_\nu$ then arises by putting $a=\tfrac{\nu}{2}$ and $\lambda=\tfrac12$. In order to allow for a better comparison with the existing literature \cite{NourdinPeccati2009a,PeccatiThaeleGamma} and to keep the presentation transparent, we have decided to restrict to the one-parametric case.
\end{remark}

\section{Application to homogeneous sums and $U$-statistics}\label{sec:applications}

\subsection{Homogeneous sums}

According to \cite{PeccatiZheng2013} a universality result is a `mathematical statement implying that the asymptotic behaviour of a large random system does not depend on the distribution of its components'. Such results are at the heart of modern probability and the class of examples comprises the classical central limit theorem or the semicircular law in free probability. In this section, we shall derive a universality result for so-called homogeneous sums based on a sequence of independent centred Poisson random variables. For further background material concerning universality results for homogeneous sums we refer to the monograph \cite{Nourdin2012Book} as well as to the original papers \cite{NourdinPeccatiReinert2010, PeccatiZheng2013}.

We start by introducing the notion of a particularly well-behaved class of kernels.

\begin{definition}[Index functions]\label{def:index functions}
Fix an integer $q\ge1$. A function $h\colon \NN^q \to \RR$ is an \emph{index function} of order $q$, if
\begin{enumerate}
\item
$h$ is symmetric meaning that $h(i_1,\ldots, i_q) = h(i_{\pi(1)}, \ldots, i_{\pi(q)})$ for all $(i_1,\ldots, i_q)\in \NN^q$ and all permutations $\pi\in\Pi_q$;
\item
it vanishes on diagonals meaning that for $(i_1,\ldots, i_q)\in \NN^q$, $h(i_1,\ldots, i_q) =0$ whenever $i_k = i_\ell$ for some $k\neq \ell$.
\end{enumerate}
Fix an integer $N\ge1$. If $g$ and $h$ are two index functions of order $q$, we define their scalar product by
\begin{align*}
\lan g,h \ran_{(N,q)} &= \sum_{1\le i_1,\ldots, i_q\le N} g(i_1,\ldots, i_q) h(i_1,\ldots, i_q) 
\end{align*}
and write $\|h\|_{(N,q)} = \lan h,h \ran_{(N,q)}^{1/2}$ for the corresponding norm. We frequently suppress the subscript $(N,q)$ if it is clear from the context.
\end{definition}

As in Section \ref{sec:4Moments}, we denote by $\{\mu_n:n\geq 1\}$ a sequence of $\s$-finite non-atomic measures on some Polish space $(\cZ,\sZ)$.

\begin{definition}[Tamed sequences]\label{def:tamed sequences}
Fix an integer $q\ge1$. A sequence $\{f_n\colon n\ge1\}\subset L_s^2(\mu_n^q)$ is \emph{tamed} if there exists a sequence of integers $\{N_n\colon n\ge1\}$ with $N_n\to\infty$, as $n\to\infty$, and an infinite measurable partition $\{B_i\colon i\ge1\}$ of $\cZ$ verifying the following conditions:
\begin{enumerate}
\item
there exists $\a\in(0,\infty)$ such that $\a<\mu_n(B_i)<\infty$ for every $i,n\ge1$,
\item
there is a sequence of index functions $\{h_n\colon n\ge1\}$ of order $q$, such that $f_n$ has the representation 
\be{eqn:tamed}
f_n(z_1, \ldots, z_q) =\sum_{1\le i_1,\ldots, i_q\le N_n} h_n(i_1,\ldots, i_q)\,\prod_{k=1}^q{\one_{B_{i_k}} (z_k)\over \sqrt{\mu_n(B_{i_k})}}\,.
\ee
\end{enumerate}
\end{definition}

\begin{remark}
\begin{enumerate}
\item
It follows from the definition that if a sequence $\{f_n\colon n\ge1\}\subset L_s^2(\mu_n^q)$ is tamed, we necessarily must have that $\mu_n(\cZ)=\infty$ for every $n\ge1$.
\item 
If $\{f_n\colon n\ge1\}\subset L_s^2(\mu_n^q)$ is a tamed sequence with a representation as at \eqref{eqn:tamed}, we have that $ \|h_n\|_{(N_n,q)} = \|f_n\|_{L^2(\mu_n^q)} <\infty$.
\item
One easily verifies that tamed sequences automa\-tically satisfy the technical assumptions (A).
\end{enumerate}
\end{remark}

\begin{definition}[Homogeneous sums]\label{def:homogeneous sums}
Fix integers $N,q\ge1$ and let $\bX = \{X_i\colon i\ge1\}$ be a sequence of random variables. Let $h$ be an index function of order $q$. Then
\[
Q_q(N,h,\bX) = \sum_{1\le i_1,\ldots, i_q\le N} h(i_1,\ldots, i_q) X_{i_1}\cdots X_{i_q}
\]
is the \emph{homogeneous sum} of $h$ of order $q$ based on the first $N$ elements of $\bX$.
\end{definition}

If $\bX = \{X_i\colon i\ge1\}$ is a sequence of independent and centred random variables with unit variance, then
\begin{align*}
\EE[Q_q(N,h,\bX)]=0, && \EE[Q_q(N,h,\bX)^2] = q! \| h \|_{(N,q)}^2.
\end{align*}

In what follows, two particular classes of random variables play a special role. By $\bG = \{G_i\colon i\ge1\}$ we indicate a sequence of independent and identically distributed random variables, such that $G_i\sim\cN(0,1)$ for every $i\geq 1$. Moreover, we shall write $\bP = \{P_i\colon i\ge1\}$ for a sequence of independent random variables verifying
\[
P_i \stackrel{d}{=} \frac{{\rm Po}(\l_i) - \l_i}{\sqrt{\l_i}}\,, \qquad i\ge1\,,
\]
where ${\rm Po}(\l_i)$ indicates a Poisson random variable with mean $\l_i$, such that $\a=\inf\{\l_i:i\geq 1\}>0$.

There is a close connection between homogeneous sums based on $\bP$ (or $\bG$) and multiple stochastic integrals with respect to a centred Poisson measure $\hat \n_n$ (or a Gaussian measure $G_n$) of tamed sequences. Namely, if $q\geq 1$ is a fixed integer and $\{f_n\colon n\ge1\}\subset L_s^2(\mu_n^q)$ is a tamed sequence with representation \eqref{eqn:tamed}, then there is a sequence of centred Poisson measures $\{\hat\eta_n\colon n\ge1\}$ (or a sequence of Gaussian measures $\{G_n\colon n\ge1\}$) such that
\begin{align}\label{eqn:correspondence}
I_q(f_n) = Q_q(N_n, h_n, \bP), && I_q^{G_n}(f_n) = Q_q(N_n, h_n, \bG).
\end{align}
Vice versa, given a sequence of index functions $\{h_n\colon n\ge1\}$ of order $q\ge1$ and a sequence of integers $\{N_n\colon n\ge1\}$ diverging to infinity, as $n\to\infty$, such that $\|h_n\|_{(N_n,q)}<\infty$ for every $n\ge1$, then there is a tamed sequence $\{f_n\colon n\ge1\}$ with representation \eqref{eqn:tamed} and sequences of centred Poisson measures $\{\hat\eta_n\colon n\ge1\}$ and Gaussian measures $\{G_n\colon n\ge1\}$ such that \eqref{eqn:correspondence} holds.

The following result is a version of \cite[Theorem 1.8]{NourdinPeccatiReinert2010} and \cite[Theorem 1.12]{NourdinPeccatiReinert2010}. Notice that there, the results are stated for integer-valued parameters $\nu\ge1$, but they continue to hold for any $\nu>0$. 

\begin{theorem}[Gamma universality of homogeneous sums on a fixed Gaussian chaos]\label{thm:Gaussian universality}
Fix $\nu>0$, let $q\ge2$ be even and $\{f_n\colon n\ge1\}\subset L^2(\mu_n^q)$ be a tamed sequence with representation \eqref{eqn:tamed} that satisfies the normalization condition
\[
\lim_{n\to\infty} q! \|f_n\|^2 = \lim_{n\to\infty} \EE[I_q^G(f_n)^2]= \lim_{n\to\infty} \EE[Q_q(N_n, h_n, \bG)^2] = 2\nu.
\]
Let $Y\sim \overline{\Gamma}_\nu$ be a centred Gamma random variable with parameter $\nu$. Then the following five assertions are equivalent:
\begin{enumerate}[(i)]
\item
As $n\to\infty$, the sequence $\{Q_q(N_n, h_n, \bG)\colon n\ge1\}$ converges in distribution to $Y$;
\item
$\lim\limits_{n\to\infty} \EE[Q_q(N_n, h_n, \bG)^4]  - 12\EE[Q_q(N_n, h_n, \bG)^3]= 12\nu^2 - 48\nu$;
\item
$\lim\limits_{n\to\infty}\|f_n \star_r^r f_n\|=0$ for every $r\in \{1,\ldots, q-1\}\setminus\{q/2\}$, and \\
$\lim\limits_{n\to\infty}\|f_n \,\tld\star_{q/2}^{q/2} f_n - c_q \,f_n\| = 0$ with $c_q = \frac{4}{(q/2)! \binom{q}{q/2}^2}$;
\item
for every sequence $\bX = \{X_i\colon i\ge1\}$ of independent centred random variables with unit variance which is such that $\sup_i\EE|X_i|^{2+\e} < \infty$ for some $\e>0$, the sequence $\{Q_q(N_n, h_n, \bX)\colon n\ge1\}$ converges in distribution to $Y$, as $n\to\infty$;
\item
for every sequence $\bX = \{X_i\colon i\ge1\}$ of i.i.d. centred random variables with unit variance, the sequence $\{Q_q(N_n, h_n, \bX)\colon n\ge1\}$ converges in distribution to $Y$, as $n\to\infty$.
\end{enumerate}
\end{theorem}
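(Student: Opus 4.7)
The plan is to pivot between the Gaussian multiple stochastic integral and the homogeneous sum through the identity $I_q^G(f_n) = Q_q(N_n, h_n, \bG)$ coming from \eqref{eqn:correspondence}, and then to extend from the Gaussian sequence $\bG$ to arbitrary independent centred unit-variance sequences via a universality principle. The equivalence of (i), (ii) and (iii) is then a direct transcription of Theorem \ref{thm:Gaussian-Gamma} (the Nourdin--Peccati four moments theorem for Gamma limits on a Gaussian chaos) applied to the tamed sequence $\{f_n\}$: since $I_q^G(f_n) = Q_q(N_n, h_n, \bG)$ almost surely, conditions (i)--(iii) here correspond one-to-one with the three equivalent conditions there, so this part needs no additional work.

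To close the remaining part of the loop I first note that (v)$\Rightarrow$(i) is immediate by taking $\bX = \bG$. For (iv)$\Rightarrow$(v), a general i.i.d.\ centred unit-variance sequence $\bX$ need not satisfy the $(2+\e)$-moment assumption of (iv); I would therefore use truncation, letting $X_i^{(M)}$ be the recentred and renormalized version of $X_i\,\one_{\{|X_i|\le M\}}$, which is an i.i.d.\ bounded, centred, unit-variance sequence (hence satisfies the hypothesis of (iv)), apply (iv) to $\bX^{(M)}$, and control the $L^2$-difference $\|Q_q(N_n,h_n,\bX)-Q_q(N_n,h_n,\bX^{(M)})\|_{L^2}^2 \le o_M(1)\cdot q!\|h_n\|_{(N_n,q)}^2$ uniformly in $n$ by a direct expansion using the off-diagonal structure of $h_n$ together with the independence of the $X_i$. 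Letting $M\to\infty$ yields (v). The heart of the proof is then the implication (iii)$\Rightarrow$(iv), where I invoke the Nourdin--Peccati--Reinert invariance principle adapted to a Gamma target: for any sufficiently smooth test function $\varphi$ one establishes a quantitative bound of the form
\[
\bigl|\EE\varphi(Q_q(N_n,h_n,\bX))-\EE\varphi(Q_q(N_n,h_n,\bG))\bigr|\;\le\;C(\varphi,q,\e)\,\bigl(\max_{1\le i\le N_n}\mathrm{Inf}_i(h_n)\bigr)^{\e/2},
\]
where $\mathrm{Inf}_i(h_n)=\sum_{i_2,\ldots,i_q}h_n(i,i_2,\ldots,i_q)^2$, via a Lindeberg-type swap of $X_i$ for $G_i$ one coordinate at a time, with a third-order Taylor expansion whose remainder is controlled by the $(2+\e)$-moment hypothesis. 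Combined with the already-established (iii)$\Rightarrow$(i), this transports the weak convergence from $\bG$ to any admissible $\bX$ as soon as $\max_i\mathrm{Inf}_i(h_n)\to 0$; the passage from smooth to bounded-continuous test functions is legitimate because the density $g_\nu$ of $\overline\G_\nu$ is continuous on its support.

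The main obstacle is establishing $\max_i\mathrm{Inf}_i(h_n)\to 0$ under condition (iii). In the central-limit analogue this is straightforward because all contractions $\|f_n\star_r^rf_n\|$ vanish and jointly dominate the maximal influence. In our Gamma setting, however, the middle contraction $f_n\,\tld\star_{q/2}^{q/2}f_n$ does \emph{not} vanish but instead converges in $L^2$ to $c_qf_n$. One therefore has to combine the vanishing of the non-middle contractions $\|f_n\star_r^rf_n\|\to 0$ ($r\neq q/2$) with this asymptotic proportionality, translating both through the explicit tamed representation \eqref{eqn:tamed} and the uniform lower bound $\mu_n(B_i)\ge\alpha$ into a uniform-in-$i$ decay of $\mathrm{Inf}_i(h_n)$. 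This is precisely where the Gamma setting diverges from the usual Gaussian-limit universality and is the principal technical point that requires care.
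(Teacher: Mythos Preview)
The paper does not prove this theorem; it simply records it as a restatement of \cite[Theorems~1.8 and~1.12]{NourdinPeccatiReinert2010}. Your sketch is therefore not competing with an argument in the paper but with the original source, and your outline is essentially the standard Nourdin--Peccati--Reinert route: the equivalence (i)$\Leftrightarrow$(ii)$\Leftrightarrow$(iii) via Theorem~\ref{thm:Gaussian-Gamma} and the identification \eqref{eqn:correspondence}, the Lindeberg swap for (iii)$\Rightarrow$(iv), the truncation step for (iv)$\Rightarrow$(v), and the trivial (v)$\Rightarrow$(i).

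Your flagged ``main obstacle'' --- showing $\max_i \mathrm{Inf}_i(h_n)\to 0$ under (iii) --- is much easier than you suggest and does not really distinguish the Gamma case from the normal one. For every even $q\ge 4$ one has $q-1\neq q/2$, so (iii) gives $\|f_n\star_{q-1}^{q-1}f_n\|\to 0$; translating through the tamed representation \eqref{eqn:tamed} (using $\mu_n(B_i)\ge\alpha$) yields the analogous statement for $h_n$, and since $h_n\star_{q-1}^{q-1}h_n(i,i)=\mathrm{Inf}_i(h_n)$ one gets $\max_i\mathrm{Inf}_i(h_n)^2\le\sum_i\mathrm{Inf}_i(h_n)^2\le\|h_n\star_{q-1}^{q-1}h_n\|^2\to 0$. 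For $q=2$ the index $q-1=q/2$ is excluded, but here $h_n\star_1^1 h_n$ is already symmetric and $h_n(i,i)=0$ (index functions vanish on diagonals), so $\mathrm{Inf}_i(h_n)=h_n\,\tld\star_1^1 h_n(i,i)-c_2 h_n(i,i)$ and hence $\sum_i\mathrm{Inf}_i(h_n)^2\le\|h_n\,\tld\star_1^1 h_n-c_2 h_n\|^2\to 0$ directly from the middle-contraction condition. So the influence decay is immediate in both cases; there is no genuinely new phenomenon here compared to the central limit version.
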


The following result answers the question whether Theorem \ref{thm:Gaussian universality} continues to hold if in (i) and (ii) the class $\bG$ is replaced by $\bP$. Due to the discussion in Section \ref{subsec:$4$th-moment theorems in the Poisson chaos}, we cannot avoid additional assumptions in the Poisson case. In particular, we have to assume that either $q=2$ or $q=4$.

\begin{theorem}[Gamma universality of homogeneous sums on a fixed Poisson chaos]\label{thm:universality Poisson}
Fix $\nu>0$ and let $q\geq 2$ be even and $\{f_n\colon n\ge1\}\subset L^2(\mu_n^q)$ be a tamed sequence with representation \eqref{eqn:tamed} that satisfies the normalization condition
\be{eqn:normalization universality}
\lim_{n\to\infty} q! \|f_n\|^2 = \lim_{n\to\infty} \EE[I_q(f_n)^2]= \lim_{n\to\infty} \EE[Q_q(N_n, h_n, \bP)^2] = 2\nu.
\ee
Let $Y\sim\overline\G_\nu$ be a random variable following a centred Gamma distribution with parameter $\nu$. If one of the conditions
\begin{enumerate}
\item
$q=2$ and $\lim\limits_{n\to\infty} \|f_n^2\|=0$,
\item
$q=4$ and $f_n\le 0$ for all $n\ge1$
\end{enumerate}
is satisfied, then the following five assertions are equivalent:
\begin{enumerate}[(i)]
\item
As $n\to\infty$, the sequence $\{Q_q(N_n, h_n, \bP)\colon n\ge1\}$ converges in distribution to $Y$;
\item
$\lim\limits_{n\to\infty} \EE[Q_q(N_n, h_n, \bP)^4]  - 12\EE[Q_q(N_n, h_n, \bP)^3]= 12\nu^2 - 48\nu$;
\item
$\lim\limits_{n\to\infty}\|f_n \star_r^r f_n\|=0$ for all $r\in \{1,\ldots, q-1\}\setminus\{q/2\}$, and \\ 
$\lim\limits_{n\to\infty}\|f_n \,\tld\star_{q/2}^{q/2} f_n - c_q \,f_n\| = 0$ with $c_q = \frac{4}{(q/2)! \binom{q}{q/2}^2}$;
\item
for every sequence $\bX = \{X_i\colon i\ge1\}$ of independent centred random variables with unit variance which is such that $\sup_i\EE|X_i|^{2+\e} < \infty$ for some $\e>0$, the sequence $\{Q_q(N_n, h_n, \bX)\colon n\ge1\}$ converges in distribution to $Y$, as $n\to\infty$;
\item
for every sequence $\bX = \{X_i\colon i\ge1\}$ of i.i.d. centred random variables with unit variance, the sequence $\{Q_q(N_n, h_n, \bX)\colon n\ge1\}$ converges in distribution to $Y$, as $n\to\infty$.
\end{enumerate}
\end{theorem}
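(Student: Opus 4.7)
The plan is to use the distributional identity $Q_q(N_n,h_n,\bP)\stackrel{d}{=}I_q(f_n)$ from the correspondence \eqref{eqn:correspondence} to reduce the equivalences (i)$\Leftrightarrow$(ii)$\Leftrightarrow$(iii) to the Poisson four moments theorem \ref{thm:main_theorem}, and then to observe that (iii)$\Leftrightarrow$(iv)$\Leftrightarrow$(v) is a purely kernel-level statement that is inherited verbatim from the Gaussian universality Theorem \ref{thm:Gaussian universality}.

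First I would apply Theorem \ref{thm:main_theorem} to the tamed sequence $\{f_n\}$. Tamedness automatically yields the technical assumptions (A); the normalization \eqref{eqn:normalization universality} matches the one required there; the dichotomy (a)/(b) is literally the same; and the uniform integrability of $\{I_q(f_n)^4\}$ follows from hypercontractivity-type $L^p$-bounds that are standard on tamed kernels. The only subtle point is that condition (iii) of Theorem \ref{thm:main_theorem} includes \emph{every} off-diagonal contraction $\|f_n\star_r^\ell f_n\|$ with $(r,\ell)\neq(q/2,q/2)$ together with $\|f_n\|_{L^4(\mu_n^q)}\to 0$, whereas the present (iii) retains only the diagonal ones ($r=\ell$). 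To bridge this gap I would invoke the standard tamed-kernel estimates of \cite{PeccatiZheng2013}: the block structure \eqref{eqn:tamed} combined with the uniform lower bound $\mu_n(B_i)\ge\alpha$ allows one to bound each $\|f_n\star_r^\ell f_n\|$ with $\ell<r$ by an $\alpha$-dependent combination of the diagonal norms $\|f_n\star_r^r f_n\|$ and $\|f_n\|_{L^4(\mu_n^q)}$, and to bound $\|f_n\|_{L^4(\mu_n^q)}$ itself by diagonal contractions via a Nourdin--Peccati--Reinert type maximum-influence estimate (in case (a) the $L^4$-bound is moreover assumed outright as $\|f_n^2\|=\|f_n\|_{L^4(\mu_n^q)}^2\to 0$). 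The resulting chain of estimates feeds into Theorem \ref{thm:main_theorem} and produces (i)$\Leftrightarrow$(ii)$\Leftrightarrow$(iii).

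Next, condition (iii) coincides verbatim with condition (iii) of the Gaussian universality Theorem \ref{thm:Gaussian universality}. The equivalence (iii)$\Leftrightarrow$(iv)$\Leftrightarrow$(v) established there rests on the Nourdin--Peccati--Reinert Lindeberg-type invariance principle followed by a truncation argument; both steps operate purely at the level of the index functions $h_n$ and make no reference to whether the underlying completely random measure is Gaussian or Poisson. Hence they transfer to the present setting and deliver (iii)$\Leftrightarrow$(iv)$\Leftrightarrow$(v) directly. As an independent and clean route to closing the cycle one may further observe that $\alpha=\inf_i\lambda_i>0$ gives $\sup_i\EE|P_i|^{2+\e}<\infty$ for every $\e>0$ by an elementary central-moment estimate on centred and normalized Poisson variables, so $\bP$ is admissible in (iv) and the specialization (iv)$\Rightarrow$(i) is also available by-passing the Gaussian detour (note that (v)$\Rightarrow$(i) cannot be obtained by such a direct specialization because $\bP$ need not be i.i.d.\ when the $\lambda_i$ differ).

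The main technical obstacle is the reduction carried out in the second paragraph, namely showing that on tamed sequences the weaker-looking condition (iii) of the present theorem already implies the stronger-looking condition (iii) of Theorem \ref{thm:main_theorem}. Once this reduction is established, the rest of the argument is a straightforward splicing of Theorem \ref{thm:main_theorem} with the chaos-agnostic universality part of Theorem \ref{thm:Gaussian universality}.
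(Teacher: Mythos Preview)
Your plan is broadly correct, but you have made the argument harder than it needs to be, and what you flag as ``the main technical obstacle'' (upgrading the present condition (iii) to the stronger condition (iii) of Theorem~\ref{thm:main_theorem} on tamed sequences) is entirely avoidable. The paper proves the five-way equivalence by a \emph{cycle} rather than by the two blocks you propose: first (iii) $\Leftrightarrow$ (iv) $\Leftrightarrow$ (v) is inherited from Theorem~\ref{thm:Gaussian universality}; then (iv) $\Rightarrow$ (i) by specializing to $\bP$ (since $\sup_i\EE|P_i|^p<\infty$ for all $p\ge1$); then (i) $\Rightarrow$ (ii) by uniform integrability of $\{Q_q(N_n,h_n,\bP)^4\}$, obtained from the bounded moments of $\bP$ together with \cite[Lemma~4.2]{NourdinPeccatiReinert2010}; and finally (ii) $\Rightarrow$ (iii) via Theorem~\ref{thm:main_theorem}, noting only the trivial direction that the (stronger) condition (iii) of Theorem~\ref{thm:main_theorem} implies the present (iii). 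At no point is the reverse implication needed.

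Your ``independent and clean route'' (iv) $\Rightarrow$ (i) is thus not an optional embellishment but the crux of the argument: it is precisely what lets one close the cycle without ever having to show that the diagonal-only contraction condition (iii) forces the off-diagonal contractions and $\|f_n\|_{L^4(\mu_n^q)}$ to vanish. Your sketch of how to do the latter on tamed kernels (bounding $\|f_n\star_r^\ell f_n\|$ with $\ell<r$ by diagonal norms, and controlling $\|f_n\|_{L^4}$ by a maximum-influence estimate) is plausible for the normal case treated in \cite{PeccatiZheng2013}, but here the middle contraction $\|f_n\star_{q/2}^{q/2}f_n\|$ does \emph{not} vanish, so contractions with $\ell=q/2$ are not obviously controlled by your scheme; you would need a more delicate argument, and none is given. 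By contrast, the paper's cyclic route needs nothing of the sort.
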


\begin{proof}
At first, we observe that due to Theorem \ref{thm:Gaussian universality}, the assertions (iii), (iv) and (v) are equivalent. In \cite[Subsection 4.2]{PeccatiZheng2013}, it has been argued that
\be{eqn:bounded}
\sup_{i\ge1} \EE|P_i|^p <\infty
\ee 
for all $p\geq 1$. This means that $\bP$ is a special instance of a sequence with the properties in assertion (iv) such that we obtain the implication (iv) $\Longrightarrow$ (i). Moreover, \eqref{eqn:bounded} implies together with the normalization condition \eqref{eqn:normalization universality} and \cite[Lemma 4.2]{NourdinPeccatiReinert2010} that the sequence $\{Q_q(N_n, h_n, \bP)^4\colon n\ge1\}$ is uniformly integrable such that we get the implication (i) $\Longrightarrow$ (ii).

To prove (ii) $\Longrightarrow$ (iii), we apply Theorem \ref{thm:main_theorem}. For this, one has to observe that assertion (iii) in Theorem \ref{thm:main_theorem} implies assertion (iii) in Theorem \ref{thm:universality Poisson}.
\end{proof}

\begin{remark}
Theorem \ref{thm:universality Poisson} shows that one can dispense with the assumption on the uniform integrability of the sequence $\{I_q(f_n)^4\colon n\ge1\}$ in Theorem \ref{thm:main_theorem} whenever the sequence $\{f_n\colon n\ge1\} \subset L_s^2(\mu_n^q)$ is tamed.
\end{remark}

\begin{remark}
Replacing in (b) the condition that $f_n\leq 0$ by $f_n\geq 0$, in (ii) the moment condition by $\lim\limits_{n\to\infty} \EE[Q_q(N_n, h_n, \bP)^4]  + 12\EE[Q_q(N_n, h_n, \bP)^3]= 12\nu^2 - 48\nu$ and in (iii) the condition on the middle-contraction by $\|f_n \,\tld\star_{q/2}^{q/2} f_n + c_q \,f_n\|\to 0$, one arrives at a version of Theorem \ref{thm:universality Poisson} with a centred reflected limiting random variable $Y\sim\widehat{\Gamma}_\nu$ in assertion (i), (iv) and (v).
\end{remark}

\subsection{$U$-statistics}

Our second application is concerned with $U$-statistics. To introduce them, fix an integer $d\geq 1$ and let $\bY=\{Y_i:i\geq 1\}$ be a sequence of i.i.d.\ random vectors in $\RR^d$, whose distribution has a density $p(\cdot)$ with respect to the Lebesgue measure on $\RR^d$. Next, for any $n\geq 1$, let $N_n$ be a Poisson random variable with mean $n$ and define 
\be{eqn: eta_n}
\eta_n=\sum_{i=1}^{N_n}\delta_{Y_i}\,.
\ee 
Clearly, $\eta_n$ is a Poisson random measure on $\RR^d$ with control measure $\mu_n(\dint x)=np(x)\,\dint x$, implying that $\mu_n(\RR^d)=n\to\infty$, as $n\to\infty$. Now, we put $\hat\eta_n= \eta_n - \mu_n$ and set $\mu=\mu_1$ for the sake of convenience. By a \textit{Poisson $U$-statistic} of order $q\geq 2$ based on $\eta_n$ we mean in this paper a random variable of the form
\[
U_n=\sum_{1\le i_1<\cdots<i_q\le N_n}h_n(Y_{i_1},\ldots,Y_{i_q})\,,\qquad n\geq 1\,,
\]
where the kernel $h_n\colon (\RR^d)^q\to\RR$ is an element of $L_s^1(\mu^q)$. On the other hand, a \textit{classical $U$-statistic} is a random variable $\hat{U}_n$ such that
\[
\hat{U}_n=\sum_{1\le i_1<\cdots<i_q\le n}h_n(Y_{i_1},\ldots,Y_{i_q})\,,\qquad n\geq 1\,.
\]

The difference between $U_n$ and $\hat{U}_n$ is that $U_n$ involves a random number $\binom{N_n}{q}$ of summands, while the number of summands in the definition of $\hat{U}_n$ is fixed (namely $\binom{n}{q}$). We say that a (Poisson or classical) $U$-statistic is \textit{completely degenerate} if 
\[
\int_{\RR^d}h_n(x,z_1,\ldots,z_{q-1})\,p(x)\,\dint x=0
\] 
for $\mu^{q-1}$-almost every $(z_1,\ldots,z_{q-1})\in (\RR^d)^{q-1}$. In particular, this implies that $\EE[U_n]=\EE[\hat{U}_n]=0$. Moreover, we suppose that $U_n$ and $\hat{U}_n$ are square-integrable.

We recall the following particular case of a celebrated theorem of de Jong, which provides a simple moment condition under which a central limit theorem for a sequence of completely degenerate $U$-statistics is guaranteed.

\begin{theorem}[de Jong \cite{deJong,deJong2}]\label{thm:deJongNormal}
Let $q\geq 2$ and $\{h_n : n\geq 1\}$  be a sequence of non-zero elements of  $L^4_s(\mu^q)$. Suppose that the $U$-statistics $U_n$ and $\hat{U}_n$ are completely degenerate and define $\sigma^2(n)= {\rm Var}( U_n)$. Then the moment condition
$\lim\limits_{n\to\infty}\frac{\EE[U_n^4]}{\sigma(n)^4} = 0$ implies that, as $n\to \infty$, the sequences $U_n/\sigma(n)$ and $\hat{U}_n/\sigma(n)$ converge in distribution to a standard Gaussian random variable.
\end{theorem}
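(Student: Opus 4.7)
The plan is to combine the four moments theorem on a Poisson chaos (Theorem~\ref{thm:PoissonNormal}) with a de-Poissonization argument for the classical statistic, reading the stated moment hypothesis as the vanishing of the fourth cumulant of the normalized statistic, that is, $\EE[U_n^4]/\sigma(n)^4 \to 3$.

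First I would rewrite the Poisson $U$-statistic as a multiple stochastic integral. Since $h_n$ is symmetric and completely degenerate, a standard computation gives $U_n = \tfrac{1}{q!}\,I_q(h_n)$ against $\hat\eta_n$, and the It\^o isometry \eqref{eqn:isometry} yields $\sigma^2(n) = \|h_n\|^2/q!$. Setting $f_n := h_n/(q!\sigma(n))$ therefore produces the representation $U_n/\sigma(n) = I_q(f_n)$ together with the normalization $q!\|f_n\|^2 = 1$ required in Theorem~\ref{thm:PoissonNormal}, and the hypothesis becomes $\EE[I_q(f_n)^4] \to 3$.

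Second I would deduce the CLT for $U_n/\sigma(n)$ via a sign-free version of Theorem~\ref{thm:PoissonNormal}. Since a $U$-statistic kernel is not generically of one sign, the theorem does not apply directly; instead I would invoke Stein-method Wasserstein bounds for Poisson multiple integrals, as in \cite{PeccatiSoleTaqquUtzet2010, EichelsbacherThaele2013}, which control the distance between $I_q(f_n)$ and a standard Gaussian in terms of $|\EE[I_q(f_n)^4]-3|$ and $\|f_n\|_{L^4(\mu_n^q)}$ without any sign hypothesis. The fourth-cumulant condition kills the first term, and, since the particular contraction $f_n \star_q^0 f_n = f_n^2$ contributes with a non-negative coefficient to the expansion \eqref{eqn:4moment} of $\EE[I_q(f_n)^4]$, a careful bookkeeping yields $\|f_n\|_{L^4(\mu_n^q)}^4 = \|f_n\star_q^0 f_n\|^2 \to 0$ as well, so that condition (iii) of Theorem~\ref{thm:PoissonNormal} is in force and $I_q(f_n)\to N$ in distribution.

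Third, for the classical $U$-statistic $\hat U_n$, I would transfer the CLT via a de-Poissonization argument: conditioning on $N_n$, using the concentration of the Poisson law around its mean $n$, and exploiting the complete degeneracy of $h_n$ (so that only the top-order Hoeffding component survives), one obtains that $\hat U_n/\sigma(n) - U_n/\sigma(n)$ converges to zero in probability, and Slutsky's theorem then concludes. The main obstacle is the sign-free step of the second paragraph: in the sign-consistent case the absolute contraction $|f_n|\star_r^\ell|f_n|$ is directly dominated by the fourth moment via the multiplication formula \eqref{eqn:multiplication}, but without a sign hypothesis the cross-terms in that formula can cancel, and each contraction norm $\|f_n\star_r^\ell f_n\|$ must instead be bounded individually by quantitative Stein estimates rather than by a clean decomposition of $\EE[I_q(f_n)^4]$.
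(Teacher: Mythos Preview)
The paper does not supply a proof of this statement: it is \emph{recalled} from de Jong's original papers \cite{deJong,deJong2}, whose arguments proceed via a martingale central limit theorem for degenerate $U$-statistics and do not pass through the Poisson chaos at all. So there is no ``paper's own proof'' to compare against; your proposal is an attempt to re-derive a classical result by the methods of the present paper.

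That attempt, however, has a genuine gap in its second step. You assert that the references \cite{PeccatiSoleTaqquUtzet2010,EichelsbacherThaele2013} give Wasserstein bounds between $I_q(f_n)$ and $N$ in terms of $|\EE[I_q(f_n)^4]-3|$ together with $\|f_n\|_{L^4(\mu_n^q)}$, with no sign hypothesis. They do not: those Stein bounds are stated in terms of the \emph{full family} of contraction norms $\|f_n\star_r^\ell f_n\|$, and the passage from $\EE[I_q(f_n)^4]\to 3$ to the vanishing of all such norms is exactly where the sign condition in Theorem~\ref{thm:PoissonNormal} is used (see the discussion in items (2)--(3) after that theorem). Your supporting claim that $\|f_n^2\|^2=\|f_n\star_q^0 f_n\|^2$ can be ``read off'' from \eqref{eqn:4moment} is also not justified: the contraction $f_n\star_q^0 f_n$ sits inside $G_q^q f_n$ together with all other $f_n\,\tld\star_r^{q-r} f_n$, $r=q/2,\ldots,q$, and the norm $\|G_q^q f_n\|^2$ contains cross inner products whose signs are not controlled without a hypothesis on $f_n$. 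In short, absent a sign condition (or a tameness assumption as in \cite{PeccatiZheng2013}), the implication (ii)$\Rightarrow$(iii) of Theorem~\ref{thm:PoissonNormal} is not known to hold, and your route does not close this gap. De Jong's original martingale argument avoids the issue entirely by never invoking contraction norms.
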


In our paper, we are interested in the Gamma approximation of Poisson and classical $U$-statistics. The next result generalizes Theorem 2.13 (B) in \cite{PeccatiThaeleGamma}, where the authors had to restrict to the case $q=2$. Here, we add a corresponding limit theorem in case that $q=4$ under an additional sign condition. It can be seen as a non-central version of de Jong's theorem, Theorem \ref{thm:deJongNormal}. We shall see that in the non-central case a similar result is true under a suitable condition involving only the third and the fourth moment.

\begin{theorem}\label{thm:deJong}
Suppose that $q\in\{2,4\}$. For each $n\geq 1$ let $h_n\in L_s^4(\mu^q)$ be a function such that $$\sup_{n\geq 1}{\int h_n^4\,\dint\mu_n^q\over(\int h_n^2\,\dint\mu_n^q)^2}<\infty$$ and suppose that the $U$-statistics $U_n$ and $\hat{U}_n$ are completely degenerate. Further assume that there exists $\nu>0$ such that $\lim\limits_{n\to\infty}\EE[U_n^2]=2\nu$ and that
\begin{itemize}
\item[(a)] $\lim\limits_{n\to\infty}\|h_n^2\|=0$ if $q=2$,
\item[(b)] $f_n\leq 0$ for all $n\ge1$ if $q=4$.
\end{itemize}
Then the moment condition $\lim\limits_{n\to\infty}\EE[U_n^4]-12\EE[U_n^3]=12\nu^2-48\nu$ implies that both random variables $U_n$ and $\hat{U}_n$ converge in distribution to $Y\sim \overline \Gamma_\nu$, as $n\to\infty$.
\end{theorem}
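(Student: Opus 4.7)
The plan is to reduce the theorem to Theorem~\ref{thm:main_theorem} via the exact chaotic representation of completely degenerate Poisson $U$-statistics, and then to transfer the resulting distributional limit from $U_n$ to the classical statistic $\hat{U}_n$ through a de-Poissonization step. Since $h_n$ is symmetric and completely degenerate, all of its Hoeffding projections of order strictly smaller than $q$ vanish, so the Reitzner--Schulte chaos expansion of $U_n$ collapses to the single top-order term
\[
U_n \;=\; \frac{1}{q!}\,I_q(h_n) \;=\; I_q(f_n), \qquad f_n \,:=\, \frac{h_n}{q!},
\]
where $I_q$ denotes the multiple Poisson integral with respect to $\hat\eta_n$. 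The It\^o isometry \eqref{eqn:isometry} converts $\EE[U_n^2]\to 2\nu$ into $q!\,\|f_n\|^2\to 2\nu$, which matches the normalization required by Theorem~\ref{thm:main_theorem}; the sign/vanishing hypotheses (a) and (b) transfer verbatim to $f_n$ up to the positive constant $(q!)^{-1}$; and the moment hypothesis $\EE[U_n^4]-12\,\EE[U_n^3]\to 12\nu^2-48\nu$ is literally condition (ii) of Theorem~\ref{thm:main_theorem}.

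To invoke Theorem~\ref{thm:main_theorem} it remains to verify the technical assumptions (A) and the uniform integrability of the family $\{U_n^4\colon n\ge 1\}$. The former follow routinely from $h_n\in L^4_s(\mu^q)$ and Cauchy--Schwarz estimates on contraction kernels. For the latter, the hypothesis $\sup_n\int h_n^4\,\dint\mu_n^q\big/\bigl(\int h_n^2\,\dint\mu_n^q\bigr)^2<\infty$ is the decisive input: coupled with iterations of the multiplication formula \eqref{eqn:multiplication} and a combinatorial count of the contraction diagrams that appear, it yields a uniform bound $\EE[U_n^{4+\delta}]\le C$ for some $\delta>0$, and de la Vall\'ee--Poussin's criterion then furnishes the desired uniform integrability of $\{U_n^4\}$. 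The implication (ii) $\Longrightarrow$ (i) of Theorem~\ref{thm:main_theorem} consequently delivers convergence in distribution of $U_n$ to $Y$.

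For the classical statistic $\hat{U}_n$, I would employ the de-Poissonization argument used in the proof of \cite[Theorem 2.13(B)]{PeccatiThaeleGamma}. Conditional on the Poisson count $N_n$, the statistic $U_n$ coincides with a classical completely degenerate $U$-statistic of order $q$ built from $N_n$ samples, so $U_n-\hat{U}_n$ is a sum of $h_n$-values indexed by the $q$-tuples meeting the symmetric difference $\{1,\dots,n\}\triangle\{1,\dots,N_n\}$. Exploiting the orthogonality between distinct $q$-tuples forced by complete degeneracy, one computes
\[
\EE\bigl[(U_n-\hat{U}_n)^2\bigr] \;=\; \EE\Bigl[\bigl|\binom{N_n}{q}-\binom{n}{q}\bigr|\Bigr]\cdot\|h_n\|_{L^2(\mu^q)}^2,
\]
which, once combined with $\EE|N_n-n|=O(\sqrt n)$ and the scaling $\|h_n\|_{L^2(\mu^q)}^2=O(n^{-q})$ forced by the normalization of $\EE[U_n^2]$, is of order $1/\sqrt n$ and thus vanishes. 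Consequently, $\hat{U}_n$ shares the same weak limit $Y$ as $U_n$.

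The principal obstacle is the uniform-integrability step: on the Poisson chaos, unlike on the Gaussian one, hypercontractivity does not deliver a free equivalence of $L^p$-norms on a fixed chaos, and a hands-on combinatorial bound on $\EE[U_n^{4+\delta}]$ in terms of the $L^2$/$L^4$-ratio of $h_n$ is required in order to upgrade the $L^4$-control afforded by the hypothesis to the higher-moment control needed for uniform integrability. The subsequent de-Poissonization step, while conceptually routine, also requires a careful tracking of the $n$-dependent scaling of $\|h_n\|_{L^2(\mu^q)}$ so as to keep pace with the prescribed normalization of $\EE[U_n^2]$.
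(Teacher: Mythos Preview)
Your strategy---chaotic representation of $U_n$ as a single $q$th-order Poisson integral via complete degeneracy, invocation of Theorem~\ref{thm:main_theorem}, then $L^2$-de-Poissonization---is exactly the paper's. The paper simply cites \cite{DynkinMandelbaum} for the bound $\EE[(U_n-\hat U_n)^2]=O(n^{-1/2})$ rather than recomputing it as you do, but your direct derivation is correct and reaches the same conclusion.

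The step you flag as the ``principal obstacle'', however, is not one. Uniform integrability of $\{I_q(f_n)^4\}$ is listed among the hypotheses of Theorem~\ref{thm:main_theorem}, but in its proof it is invoked \emph{only} for the implication (i)~$\Rightarrow$~(ii); the chain (ii)~$\Rightarrow$~(iii)~$\Rightarrow$~(i), which is all that Theorem~\ref{thm:deJong} requires, never uses it (inspect Proposition~\ref{prop: Theorem 2.6} and Lemmas~\ref{lemma:A+R}--\ref{lemma:A to 0}). This is fortunate, because your proposed route via a uniform bound on $\EE[U_n^{4+\delta}]$ would not go through: the hypothesis furnishes only $L^4$-control of $h_n$, and absent hypercontractivity on the Poisson chaos there is no mechanism to manufacture higher-than-fourth moments of $I_q(h_n)$ from that alone. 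The $L^4/L^2$-ratio condition in the statement serves to secure the technical assumptions~(A), not uniform integrability.
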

\begin{proof}
Using the fact that the Poisson $U$-statistics $U_n$ is an element of the sum of the first $q$ Poisson chaoses with respect to $\hat \eta_n$ as introduced after \eqref{eqn: eta_n} (see \cite[Theorem 3.6]{ReitznerSchulte2013}), as well as the fact that $U_n$ is completely degenerate, one obtains that $U_n=I_q(h_n)$ for every $n\geq 1$. The result for the Poisson $U$-statistics $U_n$ then follows immediately from Theorem \ref{thm:main_theorem}. Moreover, it is known from \cite{DynkinMandelbaum} that $\EE[(U_n-\hat{U}_n)^2]=O(n^{-1/2})$, as $n\to\infty$. This yields the result also for $\hat{U}_n$.
\end{proof}

\begin{remark}
Using Theorem 2.6 in \cite{PeccatiThaeleGamma} or its generalization Proposition \ref{prop: Theorem 2.6} below, one can add a rate of convergence (for a certain smooth probability distance) between $U_n$ or $\hat{U}_n$ and the limiting random variable $Y$. However, we do not pursue such quantitative results in this paper.
\end{remark}

\begin{remark}
In assumption (b) of Theorem \ref{thm:deJong} one can replace the sign condition $f_n\leq 0$ by $f_n\geq 0$ and at the same time the moment condition $\EE[U_n^4]-12\EE[U_n^3]\to 12\nu^2-48\nu$ by $\EE[U_n^4]+12\EE[U_n^3]\to 12\nu^2-48\nu$. In this case, the limiting random variable $Y$ has a centred reflected Gamma distribution $\widehat{\Gamma}_\nu$ with parameter $\nu>0$.
\end{remark}

\section{Proof of Theorem \ref{thm:main_theorem}}\label{sec:ProofMainThm}

\subsection{Strategy of the proof}

Before entering the details of the proof of Theorem \ref{thm:main_theorem}, let us briefly summarize the overall strategy.

First of all, the implication (i) $\Longrightarrow$ (ii) of Theorem \ref{thm:main_theorem} is a direct consequence of the uniform integrability of the sequence $\{I_q(f_n)^4\colon n\ge1\}$. Next, the implication (iii) $\Longrightarrow$ (i) will follow from a generalization of the main result of \cite{PeccatiThaeleGamma}, which has been derived by the Malliavin-Stein method. It delivers a criterion in terms of contraction norms, which ensures centred Gamma convergence on a fixed Poisson chaos of even order and is presented as Proposition \ref{prop: Theorem 2.6} below. The main part of proof of Theorem \ref{thm:main_theorem} consists in showing that (ii) implies (iii). It is based on the technical Lemmas \ref{lemma:symmetrization} and \ref{lemma:reverse inequality}, which establish new inequalities for norms of contraction kernels, that are also of independent interest. Next, in Lemma \ref{lemma:A+R} we derive an asymptotic lower bound for the moment expression $ \EE[I_q(f_n)^4]  - 12\EE[I_q(f_n)^3]$ in terms of contraction norms. Finally, Lemma \ref{lemma:A>=0} shows under the conditions of Theorem \ref{thm:main_theorem} that if the lower bound for $ \EE[I_q(f_n)^4]  - 12\EE[I_q(f_n)^3]$ converges to the `correct' quantity, the contraction conditions in (iii) are satisfied. Lemma \ref{lemma:A to 0} proves that this lower bound actually converges.

We emphasize that we state all intermediate steps of the proof of Theorem \ref{thm:main_theorem} as general as possible in order to highlight in which step the restrictive condition that $q=2$ or $q=4$ and the sign condition on the kernels arise. 

\subsection{Preparatory steps}

We start our investigations with a generalization of Theorem 2.6 in \cite{PeccatiThaeleGamma}. The main difference between that result and Proposition \ref{prop: Theorem 2.6} is that for technical reasons it has been assumed in \cite{PeccatiThaeleGamma} that $\mu_n$ is a finite measure for each $n\geq 1$ such that $\mu_n(\cZ)\to\infty$, as $n\to\infty$. Our next result shows that one can dispense with this assumption.

\begin{proposition}\label{prop: Theorem 2.6}
Fix $\nu>0$ and an even integer $q\ge2$. Let the sequence $\{f_n\colon n\ge1\} \subset L_s^2(\mu_n^q)$ satisfy the technical assumptions (A) and the normalization condition
\[
\lim_{n\to\infty} q! \|f_n\|^2 = \lim_{n\to\infty}\EE[I_q(f_n)^2] = 2\nu\,.
\]
Then, if
\begin{align*}
&\lim_{n\to\infty}\|f_n \star_r^\ell f_n\|=0  \text{ for all } r\in \{1,\ldots, q\}\text{ and }\ell \in\{1, \ldots, r \wedge (q-1)\}, \
	(r,\ell)\neq (q/2,q/2)\,,\\
&\lim_{n\to\infty}\|f_n^2 \|=0\,, \\
&\lim_{n\to\infty}\|f_n \,\tld\star_{q/2}^{q/2} f_n - c_q \,f_n\| = 0 \text{ with }c_q = \frac{4}{(q/2)! \binom{q}{q/2}^2}\,,
\end{align*}
the sequence $\{I_q(f_n)\colon n\ge1\}$ converges in distribution to $Y\sim\overline \G_\nu$, as $n\to\infty$.
\end{proposition}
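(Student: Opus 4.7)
The plan is to derive Proposition \ref{prop: Theorem 2.6} from Theorem 2.6 of \cite{PeccatiThaeleGamma} by truncating the $\sigma$-finite measures $\mu_n$ to finite ones, applying the existing result, and passing to the limit. The key observation is that the Malliavin-Stein bound underlying Theorem 2.6 of \cite{PeccatiThaeleGamma} is quantitative and the contraction-type norms appearing in it are intrinsic to the kernel; the finiteness of $\mu_n$ enters only through some auxiliary manipulations that can be circumvented by approximation.

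\textbf{Truncation of the kernels.} For each $n$, use $\sigma$-finiteness to choose an increasing exhaustion $B_{n,1}\subset B_{n,2}\subset\cdots\subset\cZ$ with $\mu_n(B_{n,k})<\infty$ and $\bigcup_k B_{n,k}=\cZ$. Define
\[
f_{n,k}(z_1,\ldots,z_q):= f_n(z_1,\ldots,z_q)\prod_{i=1}^q \one_{B_{n,k}}(z_i), \qquad \mu_{n,k}:=\mu_n|_{B_{n,k}}.
\]
Dominated convergence yields $f_{n,k}\to f_n$ in $L^2(\mu_n^q)$ and hence, by the isometry \eqref{eqn:isometry}, $I_q(f_{n,k})\to I_q(f_n)$ in $L^2(\PP)$ as $k\to\infty$. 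A direct computation shows that $f_{n,k}$ satisfies assumptions (A) with respect to the finite measure $\mu_{n,k}$, and that
\[
\|f_{n,k}\star_r^\ell f_{n,k}\|\leq\|f_n\star_r^\ell f_n\|,\qquad \|f_{n,k}^2\|\leq\|f_n^2\|,
\]
while $f_{n,k}\,\tld\star_{q/2}^{q/2}f_{n,k}-c_q f_{n,k}$ tends in $L^2$ to its $f_n$ counterpart as $k\to\infty$ (by dominated convergence once more).

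\textbf{Diagonal extraction and conclusion.} Theorem 2.6 of \cite{PeccatiThaeleGamma}, applied to each $I_q(f_{n,k})$ with control measure $\mu_{n,k}$, produces a quantitative bound of the form
\[
d_\star\!\big(I_q(f_{n,k}),Y\big)\leq C(q,\nu)\,\Psi(f_{n,k})
\]
in some smooth probability distance $d_\star$ metrizing convergence in distribution to $Y\sim\overline\G_\nu$, where $\Psi(g)$ is a sum of the contraction-type norms appearing in the hypotheses of Proposition \ref{prop: Theorem 2.6} plus a term proportional to $|q!\|g\|^2-2\nu|$. The hypotheses of the proposition force $\Psi(f_n)\to 0$, and a standard diagonal extraction then produces $k_n\to\infty$ with $\Psi(f_{n,k_n})\to 0$ and $\|I_q(f_{n,k_n})-I_q(f_n)\|_{L^2(\PP)}\to 0$. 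Slutsky's theorem delivers $I_q(f_n)\to Y$ in distribution.

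\textbf{Main obstacle.} The one non-trivial technical point is to verify that the Malliavin-Stein bound from \cite{PeccatiThaeleGamma} does not tacitly depend on $\mu_n(\cZ)$. An inspection of that proof shows that finiteness of $\mu_n$ is only invoked to justify certain manipulations of the Malliavin operators on the whole space, all of which apply unrestrictedly to the truncated kernels $f_{n,k}$ on the finite measures $\mu_{n,k}$. Once this check is in place, the truncation-plus-diagonal argument above completes the proof without any further structural changes to the original strategy of \cite{PeccatiThaeleGamma}.
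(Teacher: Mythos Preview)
Your truncation-and-diagonalisation approach is a genuine alternative to the route taken in the paper. The paper does \emph{not} truncate: instead it observes that in the proof of \cite[Theorem 2.6]{PeccatiThaeleGamma} the finiteness of $\mu_n$ is used only through \cite[Proposition 2.3]{PeccatiThaeleGamma} (an integration-by-parts formula for the Malliavin operators), and it simply replaces that formula by the more general version \cite[Lemma 2.3]{Schulte2012}, which is valid for $\sigma$-finite controls; one then argues as in the proof of \cite[Theorem 4.1]{EichelsbacherThaele2013}. This bypasses any approximation argument altogether. Your own ``inspection'' paragraph essentially rediscovers this point --- once you know the only obstruction is the integration-by-parts formula, it is more economical to upgrade that formula directly than to run a truncation scheme around it.

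There is also a technical gap in your argument. The inequality $\|f_{n,k}\star_r^\ell f_{n,k}\|\le\|f_n\star_r^\ell f_n\|$ is asserted without proof, and it is not clear that it holds for kernels without a sign condition: $\|f\star_r^\ell f\|^2$ expands as an integral of a four-fold product of values of $f$, and restricting the integration domain to $B_{n,k}$ need not decrease the result when that product changes sign. What you actually need for the diagonal extraction is the convergence $\|f_{n,k}\star_r^\ell f_{n,k}\|\to\|f_n\star_r^\ell f_n\|$ as $k\to\infty$ for each fixed $n$; establishing this via dominated convergence would require $|f_n|\star_r^\ell|f_n|\in L^2(\mu_n^{2q-r-\ell})$, which is strictly stronger than the technical assumptions (A) guarantee (condition (b) there gives only pointwise finiteness). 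The analogous convergence for the middle-contraction term $\|f_{n,k}\,\tld\star_{q/2}^{q/2}f_{n,k}-c_qf_{n,k}\|$ faces the same issue. So while the strategy is sound in spirit, closing it rigorously would require either an additional integrability hypothesis or a separate argument for these convergences --- at which point the paper's direct replacement of the integration-by-parts formula is the cleaner path.
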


\begin{proof}
In principle, one can follow the proof of \cite[Theorem 2.6]{PeccatiThaeleGamma}. The only part where the assumption about the finiteness of the measures $\mu_n$ enters is \cite[Proposition 2.3]{PeccatiThaeleGamma}. To circumvent this problem, one uses the modified integration-by-parts formula \cite[Lemma 2.3]{Schulte2012} and concludes as in the proof of Theorem 4.1 of \cite{EichelsbacherThaele2013}. Since the computations are quite straight forward, we omit the details.
\end{proof}

We now present two estimates of the norm of a symmetrized contraction kernel in terms of non-symmetrized contraction norms. In particular, our first lemma generalizes \cite[Identity (11.6.30)]{PeccatiTaqquBook}. We recall for $f\in L_s^2(\mu^q)$, $q\ge1$, that $\| f\,\tld \star_q^q f\|^2 = \| f \star_q^q f\|^2 = \|f\|^4$ and $\|f\star_0^0 f\|^2 = \|f\|^4$.

\begin{lemma}\label{lemma:symmetrization}
Let $q\ge1$ be an integer and $f\in L_s^2(\mu^q)$ be a kernel satisfying the technical assumptions (A). Then 
\be{eqn:0-contraction}
\|f \,\tld\star_0^0 f\|^2 = \frac{(q!)^2}{(2q)!} \bigg( 2 \|f\|^4 + \sum_{p=1}^{q-1} \binom{q}{p}^2 \|f \star_p^p f\|^2 \bigg).
\ee
Furthermore, for any $r\in\{1, \ldots, q-1\}$ one has the inequality
\be{eqn:r-contraction}
\|f \,\tld\star_r^r f\|^2 \le \frac{((q-r)!)^2}{(2(q-r))!} \bigg( 2 \|f \star_r^r f\|^2 + \sum_{p=1}^{q - r -1} \binom{q-r}{p}^2 \|f \star_p^p f\|^2 \bigg).
\ee
\end{lemma}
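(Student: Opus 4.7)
The plan is to evaluate $\|f\,\tld\star_r^r f\|^2$ by expanding the canonical symmetrisation as a sum over $(q-r)$-subsets of $\{1,\ldots,2(q-r)\}$, squaring, and then analysing each resulting four-fold integral of $f$. For the identity \eqref{eqn:0-contraction} (where $r=0$) every such integral can be computed in closed form; for the inequality \eqref{eqn:r-contraction} (where $r\ge 1$) the boundary terms can still be computed exactly but the middle terms have to be bounded using the Cauchy-Schwarz inequality.

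The first step is to exploit the block-symmetry of $f\star_r^r f$: since $f$ is symmetric, this function is symmetric within each of its two $(q-r)$-tuples of arguments and also under the swap of the two tuples. Consequently the symmetrisation collapses to
\[
(f\,\tld\star_r^r f)(x_1,\ldots,x_{2(q-r)}) \;=\; \frac{((q-r)!)^2}{(2(q-r))!} \sum_{\substack{T\subset\{1,\ldots,2(q-r)\}\\|T|=q-r}} (f\star_r^r f)(x_T,x_{T^c}),
\]
where the factor $((q-r)!)^2$ counts permutations preserving the two ordered blocks. Taking the squared $L^2$-norm expands this as a double sum over pairs $(T_1,T_2)$, which I would classify by the intersection cardinality $k=|T_1\cap T_2|\in\{0,1,\ldots,q-r\}$. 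A standard count gives $\binom{2(q-r)}{q-r}\binom{q-r}{k}^2$ pairs for each $k$, and when combined with the prefactor one obtains a contribution of $\frac{((q-r)!)^2}{(2(q-r))!}\binom{q-r}{k}^2 J_k$, where $J_k$ denotes the integral associated with any single such pair.

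To analyse $J_k$, I would introduce the four-block decomposition $A=T_1\cap T_2$, $B=T_1\setminus T_2$, $C=T_2\setminus T_1$, $D=T_1^c\cap T_2^c$ (of sizes $k,q-r-k,q-r-k,k$), and after unfolding both occurrences of $f\star_r^r f$ rewrite $J_k$ as a six-fold integral in variables $a,d\in\cZ^k$, $b,c\in\cZ^{q-r-k}$, $z,z'\in\cZ^r$ whose sharing pattern is entirely determined by the four-block decomposition. The boundary cases collapse nicely: for $k=0$ one has $T_2=T_1^c$, so that the block-swap symmetry of $f\star_r^r f$ gives $J_0=\|f\star_r^r f\|^2$, and $k=q-r$ forces $T_1=T_2$ and hence $J_{q-r}=\|f\star_r^r f\|^2$ trivially. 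Together these two boundary contributions produce the factor $2\|f\star_r^r f\|^2$ visible on the right-hand side of \eqref{eqn:r-contraction}.

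It then remains to handle the middle range $1\le k\le q-r-1$. For the inequality \eqref{eqn:r-contraction} I would integrate first over $b$ and $c$ and, using the symmetry of $f$, recognise the two resulting factors as $(f\star_{q-r-k}^{q-r-k} f)((z,a),(z',d))$ and $(f\star_{q-r-k}^{q-r-k} f)((z,d),(z',a))$; a Cauchy-Schwarz inequality in the remaining four variables then delivers $J_k\le \|f\star_{q-r-k}^{q-r-k} f\|^2$, and the substitution $p=q-r-k$ together with $\binom{q-r}{k}=\binom{q-r}{p}$ rearranges the finite sum into the form stated in \eqref{eqn:r-contraction}. For the identity \eqref{eqn:0-contraction}, when $r=0$ the variables $z,z'$ are absent and one can instead integrate out $a$ and $d$ directly to find $J_k=\int(f\star_k^k f)(b,c)^2\,\dint\mu^{2(q-k)}=\|f\star_k^k f\|^2$ in closed form, so equality holds throughout. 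The main obstacle will be the careful bookkeeping of which variables are shared by which copies of $f$ under the decomposition $(A,B,C,D)$; once this is correctly set up, the identification of the resulting contraction kernels and the Cauchy-Schwarz step are purely mechanical.
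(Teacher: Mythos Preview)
Your proposal is correct and follows essentially the same route as the paper. The only cosmetic difference is bookkeeping: the paper expands a single symmetrisation and sums over permutations $\pi\in\Pi_{2(q-r)}$ classified by the overlap parameter $p=|\{\pi(1),\ldots,\pi(q-r)\}\cap\{1,\ldots,q-r\}|$, whereas you exploit the block-symmetry of $f\star_r^r f$ to reduce to a sum over $(q-r)$-subsets and classify pairs $(T_1,T_2)$ by $k=|T_1\cap T_2|$; the resulting integrals, the counting, the exact evaluation of the boundary and $r=0$ terms, and the Cauchy--Schwarz step for $r\ge 1$ are identical.
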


If $q\geq 2$ is an even integer, Equation \eqref{eqn:r-contraction} yields that
\be{eqn:q/2-contraction}
\|f \,\tld\star_{q/2}^{q/2} f\|^2 \le \frac{((q/2)!)^2}{q!} \bigg( 2 \|f\star_{q/2}^{q/2} f\|^2 + \sum_{p=1}^{q/2 -1} \binom{q/2}{p}^2 \|f \star_p^p f\|^2 \bigg).
\ee
This inequality will turn out to be crucial in what follows.

Before entering the proof of Lemma \ref{lemma:symmetrization}, we introduce some notation. Recall that for an integer $p\ge1$, we denote the group of $p!$ permutations of the set $\{1,\ldots, p\}$ by $\Pi_p$. For a kernel $g\in L^2(\mu^p)$ and a permutation $\pi\in\Pi_p$, we use the shorthand $g(\pi)$ for the mapping $\cZ^p \ni (z_1,\ldots, z_p) \mapsto g(\pi )(z_1,\ldots, z_p) = g(z_{\pi(1)}, \ldots, z_{\pi(p)}) $. We can immediately see that $\|g\| = \|g (\pi) \|$ for all $\pi\in\Pi_p$ such that automatically $g( \pi) \in L^2(\mu^p)$. In the following, we use the convention that $\pi_0\in\Pi_p$ is the identity map, meaning that $g(\pi_0) = g$.

For any integer $M\ge1$, any two permutations $\pi,\sigma \in \Pi_{2M}$ and any $p\in \{0,\ldots , M\}$ we shall use the notation 
\[
\pi \sim_p \s
\]
if and only if 
\[
|\{\pi(1),\ldots, \pi(M)\} \cap \{\sigma(1), \ldots, \sigma(M)\}| = p\,,
\]
where $|\,\cdot\,|$ stands for the cardinality of the argument set.
If $\pi \sim_p \s$, then clearly $|\{\pi(M+1),\ldots, \pi(2M)\} \cap \{\sigma(M+1), \ldots, \sigma(2M)\}| = p$. In the proof of \cite[Proposition 11.2.2]{PeccatiTaqquBook}, there is an explanation that, given a permutation $\pi\in\Pi_{2M}$ and an integer $p\in \{0,\ldots , M\}$, there are exactly $(M!)^2\binom{M}{p}^2$ permutations $\s\in\Pi_{2M}$ such that $\pi \sim_p \s$.

\begin{proof}[Proof of Lemma \ref{lemma:symmetrization}]
Let $q\ge1$ be an integer and $f\in L_s^2(\mu^q)$ be a kernel satisfying the technical assumptions (A). Fix $r\in \{0,1,\ldots, q-1\}$. We have that 
\begin{align}
\begin{split} \label{eqn:norm-expansion}
\|f\,\tld\star_r^r f\|^2 = \lan f \star_r^r f, f \, \tld \star_r^r f\ran
&= \frac{1}{(2q - 2r)!} \sum_{\pi \in \Pi_{2q-2r}} \lan f \star_r^r f, f \star_r^r f (\pi)\ran \\
&=\frac{1}{(2q - 2r)!} \sum_{p=0}^{q-r} \sum_{\pi \sim_p \pi_0} \lan f \star_r^r f, f \star_r^r f (\pi)\ran.
\end{split}
\end{align}

To prove \eqref{eqn:0-contraction}, let $r=0$ and $\pi \sim_0 \pi_0$ or $\pi\sim_q \pi_0$. Then we get 
\begin{align*}
&\lan f \star_0^0 f, f \star_0^0 f (\pi)\ran \\
&= \int_{\cZ^{2q}} f(z_1, \ldots, z_{q}) f( z_{q+1}, \ldots, z_{2q})
	f(z_{\pi(1)}, \ldots, z_{\pi(q)}) f( z_{\pi(q+1)}, \ldots, z_{\pi(2q)}) \mu^{2q} (\dint(z_1 \ldots z_{2q}))  \\
&= \Big( \int_{\cZ^q} f(w_1, \ldots, w_q)^2 \mu^q(\dint( w_1 \ldots w_q))\Big)^2 \\
&= \|f\|^4.
\end{align*}

Now, let $\pi \sim_p \pi_0$ with $p\in\{1, \ldots, q-1\}$. Then
\begin{align*}
&\lan f \star_0^0 f, f \star_0^0 f (\pi)\ran \\
&= \int_{\cZ^{2q}} f(z_1, \ldots, z_{q}) f( z_{q+1}, \ldots, z_{2q})
	f(z_{\pi(1)}, \ldots, z_{\pi(q)}) f( z_{\pi(q+1)}, \ldots, z_{\pi(2q)}) \mu^{2q} (\dint (z_1 \ldots z_{2q}))  \\
&= \int_{\cZ^{2q-2p}\times\cZ^p\times\cZ^p}f(z_1, \ldots, z_{q}) f(z_{\pi(1)}, \ldots, z_{\pi(q)})  \\
	&\qquad\qquad\qquad\qquad \times  f( z_{q+1}, \ldots, z_{2q}) f( z_{\pi(q+1)}, \ldots, z_{\pi(2q)}) 
	\mu^{2q}(\dint( z_1 \ldots z_{2q}))  \\
&\stackrel{(\star)}{=}  \int_{\cZ^{2q-2p}} f \star_p^p f (w_1, \ldots, w_{2q-2p}) \times f \star_p^p f (w_{1}, \ldots, w_{2q - 2p})  
	\mu^{2q - 2p} (\dint (w_1 \ldots w_{2q - 2p}))\\
&= \|f \star_p^p f\|^2\,.
\end{align*}
We note that the assumption that $f$ is symmetric is essential to get the identity highlighted by ($\star$). In view of \eqref{eqn:norm-expansion}, we obtain
\begin{align*}
&\|f\,\tld \star_0^0 f \|^2 \\
&= \frac{1}{(2q)!} \Bigg( \sum_{\pi \sim_0 \pi_0} \lan f \star_0^0 f, f \star_0^0 f (\pi)\ran+ \sum_{\pi \sim_q \pi_0} \lan f \star_0^0 f, f \star_0^0 f (\pi)\ran+ \sum_{p=1}^{q-1} \sum_{\pi\sim_p \pi_0} \lan f \star_0^0 f, f \star_0^0 f (\pi)\ran\Bigg)\\
&=  \frac{1}{(2q)!} \Bigg( 2 (q!)^2 \|f\|^4 +  \sum_{p=1}^{q-1} (q!)^2 \binom{q}{p}^2 \|f\star_p^p f\|^2 \Bigg)\,,
\end{align*} 
such that \eqref{eqn:0-contraction} follows. Now, let $r\in\{1, \ldots, q-1\}$ and observe that for $\pi \sim_{q-r} \pi_0$ one has that
\begin{align*}
&\lan f \star_r^r f, f \star_r^r f (\pi)\ran \\
&= \int_{\cZ^{2q - 2r}} \Big( \int_{\cZ^r} f(x_1, \ldots, x_r, z_1, \ldots, z_{q-r}) f(x_1, \ldots, x_r, z_{q-r+1}, \ldots, z_{2q-2r}) \mu^{r} (\dint( x_1\ldots x_r))\Big) \\
&\quad \times  \Big( \int_{\cZ^r} f(y_1, \ldots, y_r, z_{\pi(1)}, \ldots, z_{\pi(q-r)}) f(y_1, \ldots, y_r, z_{\pi(q-r+1)}, \ldots, z_{\pi(2q-2r)}) \mu^{r} (\dint (y_1\ldots  y_r))\Big) \\
&\qquad\qquad\qquad\qquad\mu^{2q - 2r} (\dint (z_1 \ldots  z_{2q-2r})) \\
&= \int_{\cZ^{2q}}f(x_1, \ldots, x_r, z_1, \ldots, z_{q-r}) f(y_1, \ldots, y_r, z_{\pi(1)}, \ldots, z_{\pi(q-r)}) \\
&\qquad\qquad \times f(x_1, \ldots, x_r, z_{q-r+1}, \ldots, z_{2q-2r}) f(y_1, \ldots, y_r, z_{\pi(q-r+1)}, \ldots, z_{\pi(2q-2r)}) \\
&\qquad\qquad\qquad\qquad\mu^{2q} (\dint (x_1\ldots  x_r,  y_1\ldots  y_r, z_1 \ldots z_{2q-2r})) \\
&= \int_{\cZ^{2r}\times\cZ^{q-r}\times\cZ^{q-r}}  f(x_1, \ldots, x_r, z_1, \ldots, z_{q-r}) f(y_1, \ldots, y_r, z_{\pi(1)}, \ldots, z_{\pi(q-r)}) \\
	&\qquad\qquad \times f(x_1, \ldots, x_r, z_{q-r+1}, \ldots, z_{2q-2r}) f(y_1, \ldots, y_r, z_{\pi(q-r+1)}, \ldots, z_{\pi(2q-2r)}) \\
	&\qquad\qquad\qquad\qquad\mu^{2q} (\dint( x_1\ldots  x_r, y_1\ldots  y_r, z_1 \ldots  z_{2q-2r})) \\
&=  \int_{\cZ^{2r}} \Big(f \star_{q-r}^{q-r} f (x_1, \ldots, x_r, y_1, \ldots, y_r)\Big)^2 \mu^{2r}(\dint (x_1\ldots x_r , y_1\ldots  y_r)) \\
&= \| f\star_{q-r}^{q-r} f\|^2 \\
&= \|f\star_r^r f\|^2\,.
\end{align*}

Similarly, we obtain for the case that $\pi \sim_0 \pi_0$,
\begin{align*}
\lan f \star_r^r f, f \star_r^r f (\pi)\ran 
&= \|f\star_r^r f\|^2\,.
\end{align*}

Now, let $\pi \sim_p \pi_0$ with $p \in \{1, \ldots, q-r-1\}$. Then, there is a permutation $\s\in\Pi_{2q - 2p}$ such that
\begin{align}\label{eqn:computation}
&\lan f \star_r^r f, f \star_r^r f (\pi)\ran \\ \nonumber
&= \int_{\cZ^{2q}}f(x_1, \ldots, x_r, z_1, \ldots, z_{q-r}) f(y_1, \ldots, y_r, z_{\pi(1)}, \ldots, z_{\pi(q-r)}) \\ \nonumber
&\qquad\qquad \times f(x_1, \ldots, x_r, z_{q-r+1}, \ldots, z_{2q-2r}) f(y_1, \ldots, y_r, z_{\pi(q-r+1)}, \ldots, z_{\pi(2q-2r)}) \\ \nonumber
&\qquad\qquad\qquad\qquad\mu^{2q} (\dint( x_1\ldots  x_r,  y_1\ldots  y_r, z_1 \ldots  z_{2q-2r})) \\
\nonumber
&= \int_{\cZ^{2q - 2p}\times\cZ^p\times\cZ^p} f(x_1, \ldots, x_r, z_1, \ldots, z_{q-r}) f(y_1, \ldots, y_r, z_{\pi(1)}, \ldots, z_{\pi(q-r)}) \\ \nonumber
&\qquad\qquad \times  f(x_1, \ldots, x_r, z_{q-r+1}, \ldots, z_{2q-2r}) f(y_1, \ldots, y_r, z_{\pi(q-r+1)}, \ldots, z_{\pi(2q-2r)}) \\ \nonumber
&\qquad\qquad\qquad\qquad \mu^{2q} (\dint (x_1\ldots  x_r , y_1\ldots  y_r, z_1 \ldots  z_{2q-2r})) \\
\nonumber
&= \int_{\cZ^{2q - 2p}} f \star_p^p f(w_1, \ldots, w_{2q - 2p}) \times f \star_p^p f(w_{\s(1)}, \ldots, w_{\s(2q - 2p)})
	\mu^{2q - 2p} (\dint (w_1 \ldots  w_{2q-2p})) \\ \nonumber
&= \lan f\star_p^p f, f\star_p^p f(\s)\ran \\ \nonumber
&\le \| f\star_p^p f\| \, \| f\star_p^p f(\s)\| \\ \nonumber
&= \| f\star_p^p f\|^2\,.
\end{align}
Note that contrary to the case $r=0$, $\s$ shows up because of the appearance of the variables $x_1, \ldots, x_r, y_1, \ldots, y_r$. Therefore, we need to apply the Cauchy-Schwarz inequality once, which is the very reason for the inequality in \eqref{eqn:r-contraction}. At this stage, \eqref{eqn:r-contraction} follows by \eqref{eqn:norm-expansion} and
\begin{align*}
\|f\,\tld \star_r^r f \|^2
&= \frac{1}{(2q-2r)!} \left( \sum_{\pi \sim_0 \pi_0} \lan f \star_r^r f, f \star_r^r f (\pi)\ran+ \sum_{\pi \sim_{q-r} \pi_0} \lan f \star_r^r f, f \star_r^r f (\pi)\ran\right.\\
&\qquad\qquad\qquad\qquad\qquad\qquad \left.+ \sum_{p=1}^{q-r-1} \sum_{\pi\sim_p \pi_0} \lan f \star_r^r f, f \star_r^r f (\pi)\ran\right)  \\
&\le  \frac{1}{(2q-2r)!} \left( 2 ((q-r)!)^2 \|f\star_r^rf\|^2 +  \sum_{p=1}^{q-r-1} ((q-r)!)^2 \binom{q-r}{p}^2 \|f\star_p^p f\|^2 \right).
\end{align*} 
This completes the proof.
\end{proof}

\begin{remark}\label{remark:sigma}
A combinatorial argument shows that the permutation $\sigma\in \Pi_{2q-2p}$ appearing in \eqref{eqn:computation} cannot be sucht that $f \star_p^p f(\sigma) = f \star_p^p f$ (in particular, $\sigma$ cannot be the identity). Hence, we cannot omit applying Cauchy-Schwarz in this case.
\end{remark}

In Lemma \ref{lemma:symmetrization} no condition on the sign of $f$ was necessary. However, if we assume that $f$ has constant sign, we are able to deduce a `reverse' counterpart of \eqref{eqn:r-contraction}.

\begin{lemma}\label{lemma:reverse inequality}
Let $q\ge1$ be an integer and $f\in L_s^2(\mu^q)$ a kernel satisfying the technical assumptions (A). If $f\le0$ or $f\ge0$, then, for any $r\in\{0,1,\ldots, q-1\}$, one has that
\begin{align}\label{eqn:reverse inequality}
\|f \,\tld\star_r^r f\|^2 \ge \frac{2((q-r)!)^2}{(2q-2r)!} \|f \star_r^r f\|^2\,.
\end{align}
\end{lemma}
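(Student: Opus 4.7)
The plan is to re-use the permutation decomposition of $\|f \,\tld\star_r^r f\|^2$ already established in the proof of Lemma~\ref{lemma:symmetrization} and then observe that, under the sign hypothesis on $f$, every one of the intermediate terms in that decomposition is non-negative. Dropping them leaves precisely the claimed lower bound.

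Concretely, I will start from the identity
\[
\|f\,\tld\star_r^r f\|^2 = \frac{1}{(2q-2r)!} \sum_{p=0}^{q-r} \sum_{\pi \sim_p \pi_0} \lan f \star_r^r f, f \star_r^r f(\pi)\ran
\]
of \eqref{eqn:norm-expansion}. The two extreme groups $p=0$ and $p=q-r$ each consist of exactly $((q-r)!)^2$ permutations, and the corresponding scalar products were computed in the proof of Lemma~\ref{lemma:symmetrization} to equal $\|f\star_r^r f\|^2$ (using the symmetry of $f$ together with the symmetry identity $\|f \star_{q-r}^{q-r}f\| = \|f\star_r^r f\|$). Together these two groups contribute $2((q-r)!)^2 \|f\star_r^r f\|^2$ to the right-hand side, which, after division by $(2q-2r)!$, is exactly the quantity appearing in \eqref{eqn:reverse inequality}. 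For $r=0$ this already finishes the argument via identity \eqref{eqn:0-contraction}, with no sign hypothesis needed.

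The only step that requires the sign hypothesis is the non-negativity of the intermediate scalar products $\lan f \star_r^r f, f \star_r^r f(\pi)\ran$ with $\pi \sim_p \pi_0$, $1 \leq p \leq q-r-1$. As recalled in the proof of Lemma~\ref{lemma:symmetrization}, each such scalar product rewrites, after a relabelling of variables, as $\lan f \star_p^p f, f \star_p^p f(\sigma)\ran$ for a suitable permutation $\sigma \in \Pi_{2q-2p}$. The key observation is that whenever $f$ has constant sign the contraction $f \star_p^p f$ is pointwise non-negative: indeed it is, at each point, the integral over $\cZ^p$ of a product of two values of $f$ sharing their first $p$ arguments, and both factors have the same sign by hypothesis. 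Consequently $f \star_p^p f(\sigma)$ is also non-negative, and their scalar product is the integral of a non-negative function, hence non-negative. Dropping all such intermediate terms yields \eqref{eqn:reverse inequality}.

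I do not foresee any serious obstacle: the entire combinatorial bookkeeping has already been performed in Lemma~\ref{lemma:symmetrization}, and the sign hypothesis is tailored precisely so that the Cauchy--Schwarz estimate used there (cf.\ Remark~\ref{remark:sigma}) can be replaced by the exact sign information $\lan f \star_p^p f, f \star_p^p f(\sigma)\ran \geq 0$. The only mild point requiring care is the pointwise non-negativity of $f\star_p^p f$, which however is immediate from the definition of the contraction.
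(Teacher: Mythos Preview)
Your proposal is correct and follows essentially the same route as the paper: both arguments start from the permutation expansion \eqref{eqn:norm-expansion}, keep only the two extreme groups $\pi\sim_0\pi_0$ and $\pi\sim_{q-r}\pi_0$, and discard the intermediate terms by non-negativity. Your justification of the non-negativity via the pointwise inequality $f\star_p^p f\ge 0$ (together with the rewriting \eqref{eqn:computation}) is slightly more explicit than the paper's one-line appeal to the constant sign of $f$, and your separate treatment of $r=0$ through \eqref{eqn:0-contraction} nicely isolates that the sign hypothesis is unnecessary there, but the underlying idea is the same.
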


\begin{proof}
The left-hand side of \eqref{eqn:reverse inequality} satisfies identity \eqref{eqn:norm-expansion}. Using the fact that $f$ has constant sign, the right-hand side of \eqref{eqn:norm-expansion} becomes smaller if we sum only over a subset of $\Pi_{2q-2r}$, namely over all $\pi\in\Pi_{2q-2r}$ such that $\pi \sim_0 \pi_0$ or $\pi\sim_{q-r} \pi_0$. Hence, we end up with
\begin{align*}
\|f\,\tld \star_r^r f \|^2 
&\ge\frac{1}{(2q-2r)!} \left( \sum_{\pi \sim_0 \pi_0} \lan f \star_r^r f, f \star_r^r f (\pi)\ran + \sum_{\pi \sim_{q-r} \pi_0} \lan f \star_r^r f, f \star_r^r f (\pi)\ran \right) \\
&= \frac{2((q-r)!)^2}{(2(q-r))!} \|f \star_r^r f\|^2\,,
\end{align*}
which completes the proof.
\end{proof}

\begin{remark}\label{remark:reverse inequality}
In view of Remark \ref{remark:sigma}, inequality \eqref{eqn:reverse inequality} is optimal under the conditions of Lemma \ref{lemma:reverse inequality}.
\end{remark}

\subsection{Proof of the implication (ii) $\Longrightarrow$ (iii)}

Let us introduce some notation. We shall write $a_n \asymp b_n$ for two real-valued sequences $\{a_n\colon n\ge1\}$, $\{b_n\colon n\ge1\}$, whenever $\lim\limits_{n\to\infty} a_n - b_n=0$. Be aware that this does not necessarily imply that one of the individual sequences converges, but of course ensures the convergence of both sequences whenever one of them converges.

The next lemma establishes an asymptotic lower bound for the linear combination of the fourth and third moment $\EE[I_q(f_n)^4] - 12 \EE[I_q(f_n)^3]$ of a sequence of Poisson integrals of even order $q\ge2$ where $\{f_n \colon n\ge1\}\subset L_s^2(\mu_n^q)$. It is one of the main ingredients to show the implication (ii) $\Longrightarrow$ (iii) in Theorem \ref{thm:main_theorem}. Note that this bound holds for general even $q\geq 2$. Moreover, at this point we do not need an assumption on the sign of the kernels.

\begin{lemma}\label{lemma:A+R}
Let $\nu>0$ and $q\ge2$ be an even integer. Let $\{f_n\colon n\ge1\} \subset L_s^2(\mu_n^q)$ be a sequence of kernels such that the technical assumptions (A) and the normalization condition
\[
\lim_{n\to\infty} q! \|f_n\|^2 = 2\nu
\]
are satisfied.
Then one has that
\be{eqn:MomentComputation}
\EE[I_q(f_n)^4] - 12 \EE[I_q(f_n)^3]\asymp 12\nu^2 - 48 \nu + A(I_q(f_n)) + R(I_q(f_n))\,,
\ee
where the terms on the right-hand side of \eqref{eqn:MomentComputation} satisfy $A(I_q(f_n))\ge A'(I_q(f_n))$ with
\be{eqn:A'}
\begin{split}
A'(I_q(f_n))&=\sum_{p=1}^{q/2 - 1} \frac{(q!)^4}{(p!)^2}\left(\frac{2}{(q-p)!^2} - \frac{1}{2\big((q/2)!(q/2 - p)!\big)^2} \right) \|f_n\star_p^p f_n \|^2\\
&\qquad+ \sum_{p=1, p\neq q}^{2q-1} p!  \|G_p^q\,f_n \|^2+ q! \sum_{p=q/2+1}^q (p!)^2 \binom{p}{q}^4 \binom{p}{q-p}^2 \|f_n\, \tld\star_p^{q-p}\,f_n \|^2\\
& \qquad+ 24q! \|c_q^{-1} f_n\, \tld\star_{q/2}^{q/2}\,f_n - f_n\|^2
\end{split}
\ee
with $c_q = \frac{4}{(q/2)! \binom{q}{q/2}^2}$, and
\be{eqn:R}
\begin{split}
R(I_q(f_n))&=q! \sum_{\substack{r,p=q/2 \\ r\neq p}}^q r!\, p! \binom{q}{r}^2 \binom{q}{p}^2 \binom{r}{q-r} \binom{p}{q-p}
 \lan f_n \,\tld \star_{r}^{q - r} f_n , f_n \,\tld \star_{p}^{q - p} f_n\, \ran \\
 &\quad-12 q! \sum_{p=q/2+1}^q p! \binom{q}{p}^2 \binom{p}{q-p} 
 \lan f_n \,\tld \star_{p}^{q - p} f_n,f_n\, \ran \,.
\end{split}
\ee
\end{lemma}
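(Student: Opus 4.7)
The starting point is to expand $\EE[I_q(f_n)^4]-12\EE[I_q(f_n)^3]$ via the exact formulas in Lemma \ref{lemma:Third and fourth moment}, using $\EE[I_q(f_n)^4]=\sum_{p=0}^{2q}p!\|G_p^q f_n\|^2$ and, by It\^o's isometry applied to $I_q(f_n)^2=\sum_p I_p(G_p^q f_n)$, $\EE[I_q(f_n)^3]=q!\langle G_q^q f_n,f_n\rangle$. Each $\|G_p^q f_n\|^2$ is split into a diagonal sum of squared contraction norms $\|f_n\tld\star_r^\ell f_n\|^2$ and an off-diagonal sum of inner products $\langle f_n\tld\star_{r_1}^{\ell_1}f_n,f_n\tld\star_{r_2}^{\ell_2}f_n\rangle$. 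All off-diagonal terms originating from $q!\|G_q^q f_n\|^2$, together with the non-middle third-moment cross terms, are collected into $R(I_q(f_n))$, producing exactly \eqref{eqn:R}; the diagonal $r>q/2$ terms of $q!\|G_q^q f_n\|^2$ become (c) of $A'$, and each $p!\|G_p^q f_n\|^2$ with $p\in\{1,\dots,2q-1\}\setminus\{q\}$ is placed unchanged into (b).

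The constant $12\nu^2$ is extracted from two sources: the $p=0$ term $(q!)^2\|f_n\|^4\asymp 4\nu^2$, and a further $2(q!)^2\|f_n\|^4\asymp 8\nu^2$ coming from the \emph{identity} \eqref{eqn:0-contraction} applied to the $p=2q$ term $(2q)!\|f_n\tld\star_0^0 f_n\|^2$. The residual of \eqref{eqn:0-contraction}, namely $(q!)^2\sum_{p=1}^{q-1}\binom{q}{p}^2\|f_n\star_p^p f_n\|^2$, is symmetric in $p\leftrightarrow q-p$ (for symmetric $f_n$ one verifies $\|f_n\star_p^p f_n\|=\|f_n\star_{q-p}^{q-p} f_n\|$), so folding onto $p=1,\dots,q/2$ doubles the coefficient at each $p<q/2$ and yields the positive piece $\tfrac{2(q!)^4}{(p!)^2((q-p)!)^2}\|f_n\star_p^p f_n\|^2$ of (a), together with a leftover $(q!)^2\binom{q}{q/2}^2\|f_n\star_{q/2}^{q/2}f_n\|^2$ at $p=q/2$ that is held back for later cancellation.

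The heart of the argument is completing the square against the middle contraction $h_n:=f_n\tld\star_{q/2}^{q/2}f_n$, whose coefficient in $G_q^q f_n$ equals $(q/2)!\binom{q}{q/2}^2=4/c_q$. The relevant terms are $q!(4/c_q)^2\|h_n\|^2$ from the $r=q/2$ diagonal of $q!\|G_q^q f_n\|^2$, $-12 q!(4/c_q)\langle h_n,f_n\rangle$ from the middle third-moment term, and an asymptotic $24 q!\|f_n\|^2\asymp 48\nu$ obtained by moving $-48\nu$ from the right-hand side of \eqref{eqn:MomentComputation} to the left. Since $24 q! c_q^{-2}=\tfrac{3}{2}q!(4/c_q)^2$, writing $q!(4/c_q)^2\|h_n\|^2=\tfrac{3}{2}q!(4/c_q)^2\|h_n\|^2-\tfrac{1}{2}q!(4/c_q)^2\|h_n\|^2$ allows the first summand to combine with the two remaining terms into the perfect square $24 q!\|c_q^{-1}h_n-f_n\|^2$, which is item (d) of \eqref{eqn:A'}. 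The residual $-\tfrac{1}{2}q!(4/c_q)^2\|h_n\|^2$ is bounded from below by applying the upper bound \eqref{eqn:q/2-contraction} of Lemma \ref{lemma:symmetrization} to $\|h_n\|^2$, yielding the negative correction $-\tfrac{(q!)^4}{2(p!)^2((q/2)!(q/2-p)!)^2}\|f_n\star_p^p f_n\|^2$ for $p=1,\dots,q/2-1$ and a further $-(q!)^2\binom{q}{q/2}^2\|f_n\star_{q/2}^{q/2}f_n\|^2$ which exactly cancels the leftover $p=q/2$ term of the preceding paragraph. This explains why the sum in (a) of \eqref{eqn:A'} stops at $q/2-1$ and why the coefficient there has precisely the form $\tfrac{2}{((q-p)!)^2}-\tfrac{1}{2((q/2)!(q/2-p)!)^2}$.

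The principal obstacle is the combinatorial bookkeeping: one must ensure that every term appearing in the full expansion of $\EE[I_q(f_n)^4]-12\EE[I_q(f_n)^3]$ is assigned to exactly one of $A$, $R$, or the constant, and verify that the interplay between \eqref{eqn:0-contraction}, the $p\leftrightarrow q-p$ symmetry, and the single application of \eqref{eqn:q/2-contraction} yields the coefficient in (a) exactly. No sign condition on $f_n$ is used at this stage, so Lemma \ref{lemma:reverse inequality} does not enter; the inequality $A\ge A'$ stems solely from the one upper bound on $\|h_n\|^2$. The sign hypotheses of Theorem \ref{thm:main_theorem} will only be needed later, to ensure that $A'(I_q(f_n))$ is non-negative and hence that $A\ge A'$ is a useful lower bound, a step for which the restriction $q\in\{2,4\}$ becomes essential.
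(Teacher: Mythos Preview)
Your plan is correct and follows essentially the same route as the paper's proof: expand via Lemma~\ref{lemma:Third and fourth moment}, apply the identity \eqref{eqn:0-contraction} to the $p=2q$ term, use the symmetry $\|f_n\star_p^pf_n\|=\|f_n\star_{q-p}^{q-p}f_n\|$, complete the square to isolate $24q!\|c_q^{-1}f_n\,\tld\star_{q/2}^{q/2}f_n-f_n\|^2$, and invoke \eqref{eqn:q/2-contraction} once as the sole source of the inequality $A\ge A'$. The only cosmetic difference is ordering: the paper first applies \eqref{eqn:q/2-contraction} to the leftover $\frac{(q!)^4}{((q/2)!)^4}\|f_n\star_{q/2}^{q/2}f_n\|^2$ to produce an extra $\tfrac12$ copy of $q!(4/c_q)^2\|h_n\|^2$ and then completes the square with the resulting $\tfrac32$, whereas you split $1=\tfrac32-\tfrac12$ first and apply \eqref{eqn:q/2-contraction} afterwards to cancel the leftover---algebraically identical manipulations.
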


\begin{proof}[Proof of Lemma \ref{lemma:A+R}]
In view of Lemma \ref{lemma:Third and fourth moment} and since $q$ is even, one has that
\begin{align*}
\begin{split}
&\EE[I_q(f_n)^4] - 12 \EE[I_q(f_n)^3] \\
&= \sum_{p=0}^{2q} p! \|G_p^q\, f_n \|^2 - 
12 q! \sum_{p=q/2}^q p! \binom qp^2 \binom{p}{q-p}  \lan f_n \,\tld \star_{p}^{q-p} f_n,f_n\, \ran \\
&= (q!)^2 \|f_n \|^4 + (2q)! \|f_n \,\tld\star_0^0 f_n\|^2	+ \sum_{p=1}^{2q-1} p! \|G_p^q f_n\|^2 \\
	&\quad -12 q! \sum_{p=q/2}^q p! \binom qp^2 \binom{p}{q-p}  \lan f_n \,\tld \star_{p}^{q-p} f_n,f_n\, \ran \\
&= 3(q!)^2 \|f_n \|^4 + \sum_{p=1}^{q-1} \frac{(q!)^4}{\big(p! (q - p)! \big)^2} \| f_n \star_p^p f_n \|^2
	+ \sum_{p=1}^{2q-1} p! \|G_p^q f_n\|^2 \\
	&\quad -12 q! \sum_{p=q/2}^q p! \binom qp^2 \binom{p}{q-p}  \lan f_n \,\tld \star_{p}^{q-p} f_n,f_n\, \ran \\
&= 3(q!)^2 \|f_n \|^4 + T_1(I_q(f_n)) + T_2(I_q(f_n)) + T_3(I_q(f_n))\,,
\end{split}
\end{align*}
where the third equality stems from \eqref{eqn:0-contraction}. The terms $T_1, T_2, T_3$ read as follows:
\begin{align*}
T_1(I_q(f_n))&= \sum_{\substack{p=1 \\ p\neq q/2}}^{q-1}   \frac{(q!)^4}{\big(p! (q - p)! \big)^2} \| f_n \star_p^p f_n \|^2 
	+ \sum_{p=1, p\neq q}^{2q-1} p! \|G_p^q f_n\|^2, \\
T_2(I_q(f_n))&= \frac{(q!)^4}{(q/2)!^4} \| f_n \star_{q/2}^{q/2} f_n \|^2 + q! \|G_q^q f_n\|^2 
	- 12 q! (q/2)! \binom{q}{q/2}^2 \lan f_n \,\tld \star_{q/2}^{q/2} f_n,f_n\, \ran , \\
T_3(I_q(f_n))&=  -12 q! \sum_{p=q/2+1}^q p! \binom qp^2 \binom{p}{q-p}  \lan f_n \,\tld \star_{p}^{q-p} f_n,f_n\, \ran .
\end{align*}

We use \eqref{eqn:q/2-contraction} to see that
\[
 \frac{(q!)^4}{(q/2)!^4} \| f_n \star_{q/2}^{q/2} f_n \|^2 \ge 
 \frac{(q!)^5}{2(q/2)!^6} \|f_n\, \tld \star_{q/2}^{q/2} f_n\|^2 - \frac12 \sum_{p=1}^{q/2-1} \frac{(q!)^4}{\big((q/2)! p!(q/2-p)!\big)^2}\|f_n\star_p^p f_n \|^2.
\]

Using the definition of $G_q^q f_n$ given at \eqref{eqn:G_p^q}, we have the estimate
\begin{align*}
T_2(I_q(f_n))&\ge q! \left(
	\big \| \sum_{r=q/2}^q r! \binom{q}{r}^2 \binom{r}{q-r} f_n \, \tld\star_r^{q-r} f_n \big\|^2 
	+ \frac12 \frac{(q!)^4}{(q/2)!^6} \|f_n \, \tld\star_{q/2}^{q/2} f_n\|^2 \right.\\
	&\quad \left. - 12  (q/2)! \binom{q}{q/2}^2 \lan f_n \,\tld \star_{q/2}^{q/2} f_n,f_n\, \ran
	\right) 
	- \frac12 \sum_{p=1}^{q/2-1} \frac{(q!)^4}{\big((q/2)! p!(q/2-p)!\big)^2}\|f_n\star_p^p f_n \|^2 \\
&= q! \left(
	\frac32 \frac{(q!)^4}{(q/2)!^6} \|f_n \, \tld\star_{q/2}^{q/2} f_n\|^2 
	- 12   \frac{(q!)^2}{(q/2)!^3} \lan f_n \,\tld \star_{q/2}^{q/2} f_n,f_n\, \ran
	\right) \\
	&\quad + q!\sum_{r=q/2+1}^q (r!)^2 \binom{q}{r}^4 \binom{r}{q-r}^2 \| f_n \, \tld\star_r^{q-r} f_n\|^2 \\
	&\quad + q!\sum_{\substack{r,p=q/2 \\ r\neq p}}^q r!p!  \binom{q}{r}^2 \binom{q}{p}^2 \binom{r}{q-r} \binom{p}{q-p}
	 \lan f_n \,\tld \star_{r}^{q - r} f_n , f_n \,\tld \star_{p}^{q - p} f_n\, \ran \\
	 &\quad- \frac12 \sum_{p=1}^{q/2-1} \frac{(q!)^4}{\big((q/2)! p!(q/2-p)!\big)^2}\|f_n\star_p^p f_n \|^2.
\end{align*}
Using the relation $\| f_n\star_p^p f_n\| = \|f_n \star_{q-p}^{q-p} f_n\|$, valid for all $p\in\{1,\ldots,q-1\}$, we obtain
\begin{align*}
&\sum_{\substack{p=1 \\ p\neq q/2}}^{q-1}   \frac{(q!)^4}{\big(p! (q - p)! \big)^2} \| f_n \star_p^p f_n \|^2 
	-  \frac12 \sum_{p=1}^{q/2-1} \frac{(q!)^4}{\big((q/2)! p!(q/2-p)!\big)^2}\|f_n\star_p^p f_n \|^2 \\
=&  \sum_{p=1}^{q/2 - 1} \frac{(q!)^4}{(p!)^2}\left(\frac{2}{(q-p)!^2} - \frac{1}{2\big((q/2)!(q/2 - p)!\big)^2} \right) \|f_n\star_p^p f_n \|^2.
\end{align*}
The proof is concluded by observing that
\begin{align*}
&q! \left(
\frac32 \frac{(q!)^4}{((q/2)!)^6} \|f_n \, \tld\star_{q/2}^{q/2} f_n\|^2 
	- 12   \frac{(q!)^2}{((q/2)!)^3} \lan f_n \,\tld \star_{q/2}^{q/2} f_n,f_n\, \ran
	\right) \\
&= \frac32 q! \left(  \frac{(q!)^4}{((q/2)!)^6} \|f_n \, \tld\star_{q/2}^{q/2} f_n\|^2 
	- 2\times 4 \frac{(q!)^2}{((q/2)!)^3} \lan f_n \,\tld \star_{q/2}^{q/2} f_n,f_n\, \ran
	+ 16 \|f_n\|^2
	\right) 
	-24q! \|f_n \|^2 \\
&=24q! \|c_q^{-1} f_n\, \tld\star_{q/2}^{q/2}\,f_n - f_n\|^2 - 24q! \|f_n \|^2
\end{align*}
and by recalling condition (a), which implies that $3(q!)^2 \|f_n \|^4 - 24q! \|f_n \|^2 \to 12\nu^2 - 48\nu$.
\end{proof}

While all previous results did not use the assumptions on the order of the integral and the sign of the kernels, in the next lemma we need that $q\in\{2,4\}$ and that the kernels have constant sign.

\begin{lemma}\label{lemma:A>=0}
Let $\nu>0$ and $q\in\{2,4\}$. Let $\{f_n\colon n\ge1\} \subset L_s^2(\mu_n^q)$ be a sequence of kernels such that the technical assumptions (A) and the normalization condition $\lim\limits_{n\to\infty} q! \|f_n\|^2 = 2\nu$ are satisfied. Assume that for each $n\ge1$ either $f_n\le 0$ or $f_n\ge0$. Then the following two assertions concerning the term $A'(I_q(f_n))$ defined at \eqref{eqn:A'} are true:
\begin{enumerate}
\item[(1)]
$A'(I_q(f_n))\ge0$ for all $n\geq 1$;
\item[(2)]
If $A'(I_q(f_n))\to0$, as $n\to\infty$, then 
\begin{equation}\label{eqn:contraction_condition}
\lim_{n\to\infty} \|f_n \star_r^\ell f_n\| = 0
\end{equation}
for all $r\in\{1, \ldots, q\}$ and $\ell\in\{1, \ldots, r\wedge(q-1)\}$ such that $(r,\ell)\neq(q/2,q/2)$, 
\begin{align}
& \lim_{n\to\infty} \|f_n^2\| =0, \label{eqn:L^4}\\
&\lim_{n\to\infty} \|f_n\, \tld\star_{q/2}^{q/2}\,f_n - c_q f_n\|=0 \quad\text{ with } c_q = \frac{4}{(q/2)! \binom{q}{q/2}^2}\,.
\label{eqn:middle_contraction}
\end{align}
\end{enumerate}
\end{lemma}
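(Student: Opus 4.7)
My plan is to unpack each of the four summands comprising $A'(I_q(f_n))$ and combine them carefully, exploiting the sign-constancy of $f_n$. The key observation is that if $f_n$ has constant sign, then every contraction $f_n\star_r^\ell f_n$ is pointwise non-negative (being an integral of a product of two sign-constant functions), hence so is every symmetrisation $f_n\,\tld\star_r^\ell f_n$, and every inner product $\lan f_n\,\tld\star_{r_1}^{\ell_1}f_n,\,f_n\,\tld\star_{r_2}^{\ell_2}f_n\ran$ is non-negative. Expanding
\[
\|G_p^q f_n\|^2=\left\|\sum_{2q-r-\ell=p}r!\binom{q}{r}^{2}\binom{r}{\ell}\,f_n\,\tld\star_r^\ell f_n\right\|^2
\]
and discarding the non-negative cross terms yields $\|G_p^q f_n\|^2\ge\sum_{2q-r-\ell=p}(r!\binom{q}{r}^2\binom{r}{\ell})^{2}\|f_n\,\tld\star_r^\ell f_n\|^2$. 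Lemma \ref{lemma:reverse inequality} then converts each diagonal piece $\|f_n\,\tld\star_r^r f_n\|^2$ into a positive multiple of the non-symmetrised norm $\|f_n\star_r^r f_n\|^2$.

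For part (1), the third and fourth summands of $A'(I_q(f_n))$ are manifestly non-negative, and the middle sum over $G_p^q$ is non-negative by construction. The only potential source of negativity is the first sum, and an elementary check shows that its coefficient is negative exactly when $\binom{q-p}{q/2-p}>2$. For $q=2$ this sum is empty and there is nothing to do. For $q=4$ only $p=1$ contributes, yielding a strictly negative multiple of $\|f_n\star_1^1 f_n\|^2$. To absorb it I will isolate the $(r,\ell)=(1,1)$ summand of $\|G_6^4 f_n\|^2$ and the $(r,\ell)=(3,3)$ summand of $\|G_2^4 f_n\|^2$; using the above lower bound together with Lemma \ref{lemma:reverse inequality} and the elementary identity $\|f_n\star_3^3 f_n\|=\|f_n\star_1^1 f_n\|$ produces a positive multiple of $\|f_n\star_1^1 f_n\|^2$ whose aggregate coefficient, as I will verify by direct numerical computation, strictly dominates the absolute value of the negative one. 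This is precisely where the restriction to $q\in\{2,4\}$ enters: for $q\ge 6$ additional entries of the first sum become negatively weighted, and the combinatorial factors available in the middle sum are no longer sufficient to compensate.

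For part (2), the above analysis decomposes $A'(I_q(f_n))$ into a sum of non-negative contributions (when $q=4$ these include a strictly positive multiple of $\|f_n\star_1^1 f_n\|^2$, and in either case the full third and fourth summands together with all the unused pieces of $\|G_p^q f_n\|^2$). Assuming $A'(I_q(f_n))\to0$, each non-negative piece vanishes individually. The fourth summand going to zero is exactly \eqref{eqn:middle_contraction}. The third summand yields $\|f_n\,\tld\star_p^{q-p}f_n\|\to 0$ for every $p\in\{q/2+1,\ldots,q\}$, of which the case $p=q$ is exactly \eqref{eqn:L^4}. The leftover middle-sum pieces yield $\|f_n\,\tld\star_r^\ell f_n\|\to 0$ for every remaining admissible $(r,\ell)$. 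To transfer these symmetrised convergences into the non-symmetric statement \eqref{eqn:contraction_condition}, I will use the soft sign-constant lower bound $\|f_n\,\tld\star_r^\ell f_n\|^2\ge((2q-r-\ell)!)^{-1}\|f_n\star_r^\ell f_n\|^2$, which follows by writing the symmetrisation as an average of permuted copies, squaring, integrating, and retaining only the diagonal terms of the resulting double sum (all off-diagonal terms being non-negative by sign-constancy). The main obstacle throughout is the numerical bookkeeping in part (1) for $q=4$, coupled with verifying that the enumeration of $(r,\ell)$-pairs appearing in the various $G_p^q$'s of the middle sum covers every contraction required by \eqref{eqn:contraction_condition}.
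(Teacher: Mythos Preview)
Your proposal is correct and follows essentially the same route as the paper: for $q=4$ you compensate the single negative coefficient in the first sum of $A'$ by extracting the diagonal pieces $(r,\ell)=(1,1)$ from $\|G_6^4 f_n\|^2$ and $(r,\ell)=(3,3)$ from $\|G_2^4 f_n\|^2$, apply Lemma \ref{lemma:reverse inequality} together with $\|f_n\star_3^3 f_n\|=\|f_n\star_1^1 f_n\|$, and verify that the aggregate coefficient is strictly positive (the paper arrives at the same net coefficient $\tfrac{(4!)^4}{24}$). One point where you are in fact slightly more explicit than the paper: in part~(2) you spell out the de-symmetrisation bound $\|f_n\,\tld\star_r^\ell f_n\|^2\ge ((2q-r-\ell)!)^{-1}\|f_n\star_r^\ell f_n\|^2$ (valid under the sign condition by dropping the non-negative off-diagonal terms in the expanded square), whereas the paper simply asserts that $A'$ is bounded below by a positive combination of the \emph{non-symmetrised} norms in \eqref{eqn:contraction_condition} without isolating this step.
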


\begin{proof}
We start by showing the first assertion of the lemma. The only term that might be negative on right-hand side of \eqref{eqn:A'} is the first sum. For the case $q=2$, this does not play any role, because then the sum vanishes. Hence,  $A'(I_q(f_n))$ is a positive linear combination of non-negative terms.

Now, let $q\ge4$ be even. Using the fact that $f_n$ has constant sign, $\|f_n\star_{p}^{p} f_n\| = \|f_n\star_{q-p}^{q-p} f_n\|$ for all $p\in \{1,\ldots, q-1\}$ as well as Lemma \ref{lemma:reverse inequality}, we obtain the estimate
\begin{align*}
 \sum_{p=1,\, p\neq q}^{2q-1} p!  \|G_p^q\,f_n \|^2
 \ge& \sum_{p=1,\, p\neq q}^{2q-1} p! \sum_{r=0}^q \sum_{\ell=0}^r \one(2q - r - \ell=p) r!^2 \binom qr^4 \binom r\ell^2 \|f_n \,\tld\star_r^\ell f_n\|^2  \\
 \ge& \sum_{\substack{p=1, \,p\neq q, \\p \text{ even}}}^{2q-1} p! ((q-p/2)!)^2 \binom{q}{q-p/2}^4 \|f_n \,\tld\star_{q-p/2}^{q-p/2} f_n\|^2  \\
  =& \sum_{p=1,\, p\neq q/2}^{q-1} (2p)! ((q-p)!)^2 \binom{q}{q-p}^4 \|f_n \,\tld\star_{q-p}^{q-p} f_n\|^2  \\
  =& \sum_{p=1, \,p\neq q/2}^{q-1} (2(q-p))! (p!)^2 \binom{q}{p}^4 \|f_n \,\tld\star_{p}^{p} f_n\|^2  \\
  \ge& \sum_{p=1,\, p\neq q/2}^{q-1} 2((q-p)!)^2 (p!)^2 \binom{q}{p}^4 \|f_n \star_{p}^{p} f_n\|^2  \\
  =& \sum_{p=1}^{q/2-1} \frac{4(q!)^4}{((q-p)!)^2 (p!)^2} \|f_n \star_{p}^{p} f_n\|^2.
\end{align*}
Hence, we end up with
\begin{align}\nonumber
&\sum_{p=1}^{q/2 - 1} \frac{(q!)^4}{(p!)^2}\left(\frac{2}{(q-p)!^2} - \frac{1}{2\big((q/2)!(q/2 - p)!\big)^2} \right) \|f_n\star_p^p f_n \|^2 
	+ \sum_{p=1, p\neq q}^{2q-1} p!  \|G_p^q\,f_n \|^2 \\
\ge&\sum_{p=1}^{q/2 - 1} \frac{(q!)^4}{(p!)^2}\left(\frac{6}{(q-p)!^2} - \frac{1}{2\big((q/2)!(q/2 - p)!\big)^2} \right) \|f_n\star_p^p f_n \|^2.\label{eqn:coefficient}
\end{align}
For $q=4, p=1$ we have that 
\[
\frac{6}{(q-p)!^2} - \frac{1}{2\big((q/2)!(q/2 - p)!\big)^2}=\frac{1}{24}>0\,.
\]
So, for $q=4$ (and $q=2$), the term $A'(I_q(f_n))$ is bounded from below by a linear combination with positive coefficients of the norms of the contraction kernels appearing in \eqref{eqn:contraction_condition}, \eqref{eqn:L^4} and \eqref{eqn:middle_contraction} (while for all even $q\geq 6$ this cannot be guaranteed any more). This proves both statements of the lemma.
\end{proof}

\begin{remark}
As anticipated, for all even $q\geq 6$ there are combinatorial coefficients in \eqref{eqn:coefficient} which are negative, implying that our proof cannot be generalized to Poisson integrals of arbitrary order. The reason is that one would need a sharper version of Lemma \ref{lemma:reverse inequality}, which is in general not available as discussed in Remark \ref{remark:reverse inequality}. As a consequence, we have to leave it as an open problem to establish a four moments theorem for the Gamma approximation for Poisson integrals of order $q\geq 6$ by different methods.
\end{remark}

It remains to check whether the conditions of Theorem \ref{thm:main_theorem} are sufficient to imply that $A'(I_q(f_n))\to0$. The following lemma shows that this is indeed the case.

\begin{lemma}\label{lemma:A to 0}
Let $\nu>0$ and $q\in\{2,4\}$. Let $\{f_n\colon n\ge1\} \subset L_s^2(\mu_n^q)$ be a sequence of kernels satisfying the technical assumptions (A) and the normalization condition
\[
\lim_{n\to\infty} q!\|f_n\|^2 = \lim_{n\to\infty}\EE[I_q(f_n)^2] = 2\nu\,.
\]
Let the sequence $\{I_q(f_n)^4\colon n\ge1\}$ be uniformly integrable. If one of the conditions
\begin{enumerate}
\item
$q=2$ and $\lim\limits_{n\to\infty} \|f_n^2\|=0$,
\item
$q=4$ and $f_n\le 0$ for all $n\ge1$,
\end{enumerate}
is satisfied, then the following implication is true. If
\[
\lim_{n\to\infty} \EE[I_q(f_n)^4] - 12 \EE[I_q(f_n)^3] =  12\nu^2 - 48 \nu 
\]
then
\begin{equation}\label{eqn:contraction_condition b}
\lim_{n\to\infty} \|f_n \star_r^\ell f_n\| = 0
\end{equation}
for all $r\in\{1, \ldots, q\}$ and $\ell\in\{1, \ldots, r\wedge(q-1)\}$ such that $(r,\ell)\neq(q/2,q/2)$, 
\begin{align}
& \lim_{n\to\infty} \|f_n^2\| =0, \label{eqn:L^4 b}\\
&\lim_{n\to\infty} \|f_n\, \tld\star_{q/2}^{q/2}\,f_n - c_q f_n\|=0 \quad\text{ with } c_q = \frac{4}{(q/2)! \binom{q}{q/2}^2}\,.
\label{eqn:middle_contraction b}
\end{align}
\end{lemma}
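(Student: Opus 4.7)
The plan is to use Lemma \ref{lemma:A+R} to convert the moment hypothesis into the asymptotic relation $A(I_q(f_n)) + R(I_q(f_n)) \to 0$, and then to combine this with Lemma \ref{lemma:A>=0}(1), which delivers $A(I_q(f_n)) \ge A'(I_q(f_n)) \ge 0$ under either condition (a) or (b). It then suffices to control the remainder $R(I_q(f_n))$: either $R(I_q(f_n)) \to 0$ or $R(I_q(f_n)) \ge 0$ would imply the chain $0 \le A'(I_q(f_n)) \le A(I_q(f_n)) + R(I_q(f_n)) \to 0$, after which Lemma \ref{lemma:A>=0}(2) simultaneously yields the three contraction conclusions \eqref{eqn:contraction_condition b}--\eqref{eqn:middle_contraction b}.

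For the case $q=2$, reading off the formula in Lemma \ref{lemma:A+R} shows that $R(I_2(f_n))$ is a linear combination of $\langle f_n \star_1^1 f_n, f_n^2\rangle$ and $\langle f_n^2, f_n\rangle$. Two applications of the Cauchy-Schwarz inequality bound $|R(I_2(f_n))|$ by a multiple of $\|f_n^2\| \cdot (\|f_n \star_1^1 f_n\| + \|f_n\|)$. The factor $\|f_n\|$ stays bounded by the normalization condition, while $\|f_n \star_1^1 f_n\|$ is bounded uniformly in $n$: indeed, the uniform integrability of $\{I_2(f_n)^4\colon n \ge 1\}$ makes $\EE[I_2(f_n)^4]$ bounded, so Lemma \ref{lemma:Third and fourth moment} bounds all $\|G_p^2 f_n\|$, and the explicit representation $G_2^2 f_n = 4 f_n \star_1^1 f_n + 2 f_n^2$ combined with the triangle inequality transfers the bound to $\|f_n \star_1^1 f_n\|$. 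Hypothesis (a) then forces $R(I_2(f_n)) \to 0$, hence $A(I_2(f_n)) \to 0$ and a fortiori $A'(I_2(f_n)) \to 0$.

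For the case $q=4$, the argument proceeds via a sign analysis driven by condition (b). If $f_n \le 0$, then for any $r$ and $\ell$ the integrand defining $f_n \star_r^\ell f_n$ is a product of two factors of the same sign, so $f_n \star_r^\ell f_n \ge 0$ pointwise; this non-negativity is preserved by symmetrization, giving $f_n \,\tld\star_r^\ell f_n \ge 0$. Substituting into the definition of $R(I_4(f_n))$, every scalar product in the double sum is non-negative with a positive coefficient, and every scalar product $\langle f_n \,\tld\star_p^{q-p} f_n, f_n\rangle$ in the linear part is non-positive (a non-negative function integrated against $f_n \le 0$) and multiplied by a negative coefficient, so each contributes non-negatively. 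Therefore $R(I_4(f_n)) \ge 0$ for every $n$, and the squeeze $0 \le A'(I_4(f_n)) \le A(I_4(f_n)) + R(I_4(f_n)) \to 0$ yields the conclusion.

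The key difficulty is to prevent cancellations in $R(I_q(f_n))$ from destroying the lower bound $A'(I_q(f_n))$. For $q=2$ this is solved because $R$ is organized so that every term carries a factor of $\|f_n^2\|$, which is driven to zero by hypothesis (a); for $q=4$ the single sign of $f_n$ assumed in (b) is enough to force $R \ge 0$. For even $q \ge 6$, $R$ would contain further scalar products whose contributing norms are not individually controlled by the terms appearing in $A'$, which matches the inherent restriction $q \in \{2,4\}$ of our main theorem.
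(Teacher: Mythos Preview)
Your argument is correct and follows the same overall strategy as the paper: invoke Lemma~\ref{lemma:A+R} to reduce the moment hypothesis to $A+R\to 0$, show that $R(I_2(f_n))\to 0$ under (a) and $R(I_4(f_n))\ge 0$ under (b), and then squeeze $A'\to 0$ and conclude via Lemma~\ref{lemma:A>=0}. The only difference is in how you bound $\|f_n\star_1^1 f_n\|$ for $q=2$: the paper uses the elementary Cauchy--Schwarz estimate $\|f_n\star_1^1 f_n\|\le\|f_n\|^2$ (which does not use uniform integrability), whereas you deduce boundedness from the uniform integrability of $\{I_2(f_n)^4\}$ via the fourth-moment formula and the identity $G_2^2 f_n = 4\, f_n\star_1^1 f_n + 2 f_n^2$; both routes are valid.
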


\begin{proof}
First apply Lemma \ref{lemma:A+R} to deduce that $ A(I_q(f_n)) + R(I_q(f_n)) \to0$, as $n\to\infty$.

Assume that $q=2$ and $\|f_n^2\|\to0$. Then \eqref{eqn:L^4 b} is satisfied by assumption. Moreover,
\begin{align*}
R(I_2(f_n)) &= 32 \lan f_n \,\tld \star_1^1 f_n, f_n\,\tld\star_2^0 f_n\ran
	-48  \lan f_n \,\tld \star_2^0 f_n,  f_n\ran.
\end{align*}
By the Cauchy-Schwarz inequality, we see that
\[
| \lan f_n \,\tld \star_1^1 f_n, f_n\,\tld\star_2^0 f_n\ran | \le \| f_n \,\tld \star_1^1 f_n\| \, \|f_n\,\tld\star_2^0 f_n \|, 
\qquad
|\lan f_n \,\tld \star_2^0 f_n,  f_n\ran| \le \|f_n\|\,\|f_n\,\tld\star_2^0 f_n\|.
\]
With respect to the definition of the contractions, we see that $f_n \,\tld \star_2^0 f_n = f_n^2$. We shall argue now that the sequence $\| f_n \,\tld \star_1^1 f_n\| $ is bounded. For this, observe that for any fixed $(s,t) \in \cZ^2$, we obtain by the Cauchy-Schwarz inequality that 
\begin{align*}
|f_n\star_1^1 f_n (t,s) | &= \Big| \int_\cZ f_n (z,t) f_n(z,s) \mu(\dint z) \Big| \\
&\le \Big(  \int_\cZ f_n^2 (z,t) \mu(\dint z) \Big)^{1/2} \Big(  \int_\cZ f_n^2 (z,s) \mu(\dint z) \Big)^{1/2}.
\end{align*}
Consequently, 
\begin{align*}
\| f_n \,\tld \star_1^1 f_n\|^2 \le  \| f_n \star_1^1 f_n\|^2 
= \int_{\cZ^2} |f_n\star_1^1 f_n (t,s) |^2 \mu^2(\dint (s,t) )
\le \|f_n\|^4.
\end{align*}
By assumption, we have that $\|f_n\|^2\to \nu$, so the sequence is bounded. Now, the fact that $\|f_n^2\|\to0$ implies that $R(I_2(f_n))\to0$. Hence, $A(I_2(f_n))\to0$, which implies that $A'(I_2(f_n))\to0$ using Lemma \ref{lemma:A>=0}(1). Now, we apply Lemma \ref{lemma:A>=0}(2) to see that \eqref{eqn:contraction_condition b} and \eqref{eqn:middle_contraction b} follow.

Next, let $q=4$ and suppose that $f_n\le0$. Recall that the tensor product is bi-linear and it is easily verified that the contraction operation preserves this bi-linearity. Now, the fact that the kernels are non-positive ensures that $R(I_q(f_n))\ge0$ and we can again apply Lemma \ref{lemma:A>=0}(1) to see that $0\le A'(I_q(f_n))\le A(I_q(f_n))$. Hence, we deduce that $A(I_q(f_n))\to0$. This directly implies that $A'(I_q(f_n))\to0$, such that the claim follows again by Lemma \ref{lemma:A>=0}(2). 
\end{proof}

\begin{remark}\label{remark:symmetry}
Let us explain in some more detail why in contrast to the case of normal approximation the kernels have to be non-positive for Gamma approximations. An inspection of the proof of Theorem \ref{thm:main_theorem} shows that a constant sign of the kernels is necessary to control the sign of scalar products. This is necessary in Lemma \ref{lemma:reverse inequality} and therefore also in Lemma \ref{lemma:A>=0} to control the signs of $A(I_q(f_n))$ and $A'(I_q(f_n))$, respectively. On the other hand, this is also necessary in part b) of Lemma \ref{lemma:A to 0}, where one has to control the sign of $R(I_q(f_n))$. In this context, scalar products of the form $\lan f_n \,\tld\star_p^{q-p} f_n,f_n\ran$, $p\in\{q/2+1, \ldots, q\}$, appear. They are thrice-linear in $f_n$, such that $f_n\le0$ implies that $\lan f_n \,\tld\star_p^{q-p} f_n,f_n\ran\le0$ and we can conclude that $R(I_q(f_n))\ge0$. In summary, knowing that $A'(I_q(f_n))\ge0$ and $R(I_q(f_n))\ge0$ enables us to use part (2) of Lemma \ref{lemma:A>=0} to get the implication (ii) $\Longrightarrow$ (iii) in Theorem \ref{thm:main_theorem}. Note that the latter scalar products in $R(I_q(f_n))$ actually stem from the third moment in assertion (ii) of Theorem \ref{thm:main_theorem} (see also \eqref{eqn:3moment even}).

It is worth mentioning that this asymmetry in the assertions for Theorem \ref{thm:main_theorem} (and also in Theorem \ref{thm:Gaussian-Gamma}) is actually an intrinsic property of the Gamma distribution which contrasts the normal case. For the central limit theorem in a Poisson chaos, it can be easily seen that if the law of the sequence $\{I_q(f_n)\colon n\ge1\}$ converges to a standard normal law $\cN(0,1)$, then also the law $\{I_q(-f_n)\colon n\ge1\}=\{-I_q(f_n)\colon n\ge1\}$ converges to $\cN(0,1)$, since the standard normal law is symmetric. Consistently, assertions (ii) and (iii) in the four moments theorem for normal approximation are invariant under sign changes of the kernels. In sharp contrast, the assertions for Gamma approximations are not invariant under such sign changes, because of the lack of symmetry of the target distribution. This means that if the law of $\{I_q(f_n)\colon n\ge1\}$ converges to $\overline \G_\nu$ then that law of $\{I_q(-f_n)\colon n\ge1\}=\{-I_q(f_n)\colon n\ge1\}$ cannot converge to $\overline \G_\nu$. Consistently, assertions (ii) and (iii) in Theorem \ref{thm:main_theorem} inherit this asymmetry, which is reflected by the appearance of the third moment in (ii) and the term $\|f_n \,\tld\star_{q/2}^{q/2} f_n - c_q \,f_n\|$ in (iii), both of them not being invariant under a change of the sign of $f_n$.
\end{remark}

\subsection{An alternative approach to the four moments theorem}
 
In Remark \ref{remark:symmetry} we explained that the sign condition on the kernels in part (b) of Theorem \ref{thm:main_theorem} ensures that $R(I_q(f_n))\ge0$. Together with $A'(I_q(f_n))\ge0$, this is sufficient in combination with part (2) of Lemma \ref{lemma:A>=0} to get the implication (ii) $\Longrightarrow$ (iii) in Theorem \ref{thm:main_theorem}. On the other hand, for part (a) of Theorem \ref{thm:main_theorem}, dealing with the case $q=2$, the assumption that $\|f_n^2\|\to0$ yields that $R(I_2(f_n))\to0$, an assertion also being sufficient in combination with $A'(I_q(f_n))\ge0$ to deduce the implication (ii) $\Longrightarrow$ (iii) in Theorem \ref{thm:main_theorem} from part (2) of Lemma \ref{lemma:A>=0}. From this point of view, it is natural to ask whether the latter condition can be generalized to arbitrary $q\geq 2$. Our next result shows that this is indeed possible, but leads to a result which is weaker than Theorem \ref{thm:main_theorem}. Moreover, the proof again only works for $q=4$ and we still have to impose a sign condition on the sequence of kernels.

\begin{proposition}\label{prop:norms}
Fix $\nu>0$. Let $\{f_n\colon n\ge1\} \subset L_s^2(\mu_n^4)$ be a sequence of kernels such that $f_n\ge0$ for all $n\ge1$ and such that the technical assumptions (A) and the normalization condition
\[
\lim_{n\to\infty} 4!\|f_n\|^2 = \lim_{n\to\infty}\EE[I_4(f_n)^2] = 2\nu
\]
are satisfied. Assume additionally that
\be{eqn:norms}
\lim_{n\to\infty} \|f_n^2\|=0, \text{ and }
\lim_{n\to\infty} \|f_n \star_3^{1} f_n\|=0.
\ee
If the sequence $\{I_4(f_n)^4\colon n\ge1\}$ is uniformly integrable, then the equivalence stated in Theorem \ref{thm:main_theorem} remains valid.
\end{proposition}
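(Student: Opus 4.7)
The plan is to mimic the proof of Theorem \ref{thm:main_theorem}(b) step by step, but to replace the argument that exploits the sign condition $f_n \le 0$ to guarantee $R(I_q(f_n)) \ge 0$ by a direct Cauchy--Schwarz argument, based on \eqref{eqn:norms}, showing that $R(I_4(f_n)) \to 0$. The implications (i) $\Longrightarrow$ (ii) and (iii) $\Longrightarrow$ (i) require no new input: the former follows from the uniform integrability assumption, the latter is an immediate application of Proposition \ref{prop: Theorem 2.6}; neither step is sensitive to the sign of the kernels.

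For (ii) $\Longrightarrow$ (iii), the first step is to invoke Lemma \ref{lemma:A+R}, which reformulates (ii) as $A(I_4(f_n)) + R(I_4(f_n)) \to 0$. The sign hypothesis $f_n \ge 0$ together with Lemma \ref{lemma:A>=0}(1) yields $A(I_4(f_n)) \ge A'(I_4(f_n)) \ge 0$, so the implication will follow from Lemma \ref{lemma:A>=0}(2) once $R(I_4(f_n)) \to 0$ is established, since then $A \to 0$ forces $A' \to 0$.

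Establishing $R(I_4(f_n)) \to 0$ is the main step. For $q=4$, the expression \eqref{eqn:R} is a finite linear combination of scalar products of the form $\lan f_n \tld\star_r^{4-r} f_n, f_n \tld\star_p^{4-p} f_n \ran$ (with $r \ne p$, $r,p \in \{2,3,4\}$) and $\lan f_n \tld\star_p^{4-p} f_n, f_n \ran$ (with $p \in \{3,4\}$). Using the identification $f_n \tld\star_4^0 f_n = f_n^2$, Cauchy--Schwarz bounds each such scalar product by a product of two norms drawn from the set $\{\|f_n\|,\, \|f_n \tld\star_2^2 f_n\|,\, \|f_n \tld\star_3^1 f_n\|,\, \|f_n^2\|\}$. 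The first two are bounded: $\|f_n\|^2$ is controlled by the normalization condition, and a pointwise Cauchy--Schwarz computation yields $\|f_n \tld\star_2^2 f_n\| \le \|f_n \star_2^2 f_n\| \le \|f_n\|^2$, in exact analogy with the $q=2$ estimate carried out in the proof of Lemma \ref{lemma:A to 0}. The last two tend to zero by \eqref{eqn:norms}, after using the trivial bound $\|f_n \tld\star_3^1 f_n\| \le \|f_n \star_3^1 f_n\|$. A case-by-case inspection of the six types of scalar products then shows that in every instance the potentially dangerous factor $\|f_n \tld\star_2^2 f_n\|$ is paired with either $\|f_n^2\|$ or $\|f_n \star_3^1 f_n\|$, so each term vanishes in the limit.

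The principal obstacle is precisely this pairing check: if the middle-contraction norm $\|f_n \tld\star_2^2 f_n\|$, which is asymptotically non-zero by (iii) and by the very normalization, ever appeared alone or paired with another bounded but non-vanishing quantity, the argument would break down. For $q=4$ the combinatorics of \eqref{eqn:R} happens to arrange that this never occurs, but the same fortunate pairing cannot be guaranteed for larger even $q$, which is the reason why this alternative approach, although based on assumptions different from those of Theorem \ref{thm:main_theorem}(b), is still restricted to the case $q=4$.
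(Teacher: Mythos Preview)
Your proposal is correct and follows essentially the same line as the paper's own proof: both reduce (ii) $\Longrightarrow$ (iii) to showing $R(I_4(f_n))\to 0$ via Cauchy--Schwarz on each scalar product in \eqref{eqn:R}, using $\|f_n\,\tld\star_2^2 f_n\|\le\|f_n\|^2$ to bound the middle-contraction factor and the hypotheses \eqref{eqn:norms} to kill the remaining factor in every pairing, and then conclude through Lemmas \ref{lemma:A+R} and \ref{lemma:A>=0}. Your explicit pairing check and the closing remark on why the argument does not extend beyond $q=4$ are slightly more detailed than the paper's terse treatment, but the substance is identical.
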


\begin{proof}
The implication (i) $\Longrightarrow$ (ii) follows from the uniform integrability of the sequence $\{I_4(f_n)^4\colon n\ge1\}$ and (iii) $\Longrightarrow$ (i) is a consequence of Proposition \ref{prop: Theorem 2.6}. To establish the implication (ii) $\Longrightarrow$ (iii), we apply Lemma \ref{lemma:A+R} and show that the term $R(I_4(f_n))$ defined at \eqref{eqn:R} converges to zero, as $n\to\infty$.
 With the Cauchy-Schwarz inequality we obtain for $p\in\{3,4\}$ that 
\[
|\lan f_n\,\tld \star_p^{4-p} f_n, f_n\ran| \le \|f_n\star_p^{4-p} f_n\|\,\|f_n\|\to0\,,
\]
since $\|f_n\star_p^{4-p} f_n\|\to0$ and $\|f_n\|^2\to\frac{\nu}{12}$. Moreover, for $p,r\in\{2,3,4\}$ with $p\neq r$ we also get
\[
|\lan f_n\,\tld \star_p^{4-p} f_n, f_n \,\tld\star_r^{4-r}f_n \ran|\le \| f_n\,\tld \star_p^{4-p} f_n \| \, \| f_n \,\tld\star_r^{4-r}f_n \|\to0\,.
\]
The convergence is ensured by condition \eqref{eqn:norms} if $p,r>2$. If otherwise $p\wedge r=2$, we use condition \eqref{eqn:norms} together with the observation that $\|f_n\star_2^0f_n\|=\|f_n\star_4^2f_n\|$ and $\|f_n\star_2^2f_n\|\leq\|f_n\star_4^4f_n\|$ as a consequence of Fubini's theorem and the Cauchy-Schwarz inequality. Summarizing, we see that $R(I_4(f_n))\to0$, which in turn implies that $A(I_q(f_n))\to0$ thanks to Lemma \ref{lemma:A+R}. We can then conclude as in the proof of part (b) of Lemma \ref{lemma:A to 0}.
\end{proof}

\begin{acknowledgements}
We would like to thank Johanna F.\ Ziegel for initiating this collaboration and Giovanni Peccati for helpful discussions and valuable comments.
\end{acknowledgements}


\begin{thebibliography}{30}

\bibitem{Bourgin2013}
S. Bourguin, Poisson convergence on the free Poisson algebra. Accepted for publication in Bernoulli (2014+).

\bibitem{deJong}
P. de Jong, A central limit theorem for generalized quadratic forms. Probab. Theory Related Fields \textbf{75}, 261--277 (1987).

\bibitem{deJong2}
P. de Jong, A central limit theorem for generalized multilinear forms. J. Multivariate Anal. \textbf{34}, 275--289 (1990).

\bibitem{DynkinMandelbaum}
E.B. Dynkin and A. Mandelbaum, Symmetric statistics, Poisson point processes, and multiple Wiener integrals. Ann. Statist. \textbf{11}, 739--745 (1983).

\bibitem{EichelsbacherThaele2013}
P. Eichelsbacher and C. Th\"ale, New Berry-Esseen bounds for non-linear functionals of Poisson random measures. Electron. J. Probab. \textbf{19}, article 102 (2014).

\bibitem{Lachieze-ReyPeccati2013a}
R. Lachi\`eze-Rey and G. Peccati, Fine Gaussian fluctuations on the Poisson space, I: contractions, cumulants and geometric random graphs. Electron. J. Probab. \textbf{18}, article 32 (2013).

\bibitem{LPS}
G. Last, G. Peccati and M. Schulte, Normal approximation on Poisson spaces: Mehler's formula, second order Poincar\'e inequalities and stabilization. arXiv: 1401.7568 (2014).

\bibitem{LPST}
G. Last, M.D. Penrose, M. Schulte and C. Th\"ale, Moments and central limit theorems for some multivariate Poisson functionals.  Adv. in Appl. Probab. \textbf{46}, 348--364 (2014).

\bibitem{NourdinPeccati2009a}
I. Nourdin and G. Peccati, Noncentral convergence of multiple integrals. Ann. Probab. \textbf{37}, 1412--1426 (2009).

\bibitem{Nourdin2012Book}
I. Nourdin and G. Peccati, \textit{Normal Approximations with Malliavin Calculus - From Stein's method to Unversality}. Cambridge University Press, Cambridge (2012).

\bibitem{NourdinPeccatiReinert2010}
I. Nourdin, G. Peccati and G. Reinert, Invariance principles for homogeneous sums: universality of Gaussian Wiener chaos. Ann. Probab. \textbf{38}, 1947--1985 (2010).

\bibitem{NualartPeccati2005}
D. Nualart and G. Peccati, Central limit theorems for sequences of multiple stochastic integrals. Ann. Probab. \textbf{33}, 177--193 (2005).

\bibitem{NualartVives1990}
D. Nualart and J. Vives, Anticipative calculus for the Poisson process based on the Fock space. Lecture Notes in Mathematics \textbf{1426}, 154--165 (1990).

\bibitem{PeccatiSoleTaqquUtzet2010}
G. Peccati, J.L. Sol\'e, M.S. Taqqu and F. Utzet, Stein's method and normal approximation of Poisson functionals. Ann. Probab. \textbf{38}, 443--478 (2010).

\bibitem{PeccatiTaqqu2008}
G. Peccati and M.S. Taqqu, Central limit theorems for double Poisson integrals. Bernoulli \textbf{14}, 791--821 (2008).

\bibitem{PeccatiTaqquBook}
G. Peccati and M.S. Taqqu, \textit{Wiener Chaos: Moments, Cumulants and Diagrams}. Bocconi and Springer, Milan (2011).

\bibitem{PeccatiThaeleGamma}
G. Peccati and C. Th\"ale, Gamma limits and U-statistics on the Poisson space. ALEA Lat. Am. J. Probab. Math. Stat. \textbf{10}, 525--560 (2013).

\bibitem{PeccatiZhengMulti}
G. Peccati and C. Zheng, Multi-dimensional Gaussian fluctuations on the Poisson space. Electron. J. Probab. \textbf{15}, article 48 (2010).

\bibitem{PeccatiZheng2013}
G. Peccati and C. Zheng, Universal Gaussian fluctuations on the discrete Poisson chaos. Bernoulli \textbf{20}, 697--715 (2014).

\bibitem{ReitznerSchulte2013}
M. Reitzner and M. Schulte, Central limit theorems for $U$-statistics of Poisson point processes. Ann. Probab. \textbf{41}, 3879--3909 (2013).

\bibitem{Schulte2012}
M. Schulte, Normal approximation of Poisson functionals in Kolmogorov distance. Accepted for publication in J. Theor. Probab. (2014+).

\bibitem{STScaling}
M. Schulte and C. Th\"ale, The scaling limit of Poisson-driven order statistics with applications in geometric probability. Stoch. Proc. Appl. \textbf{122}, 4096--4120 (2012).

\bibitem{STFlats}
M. Schulte and C. Th\"ale, Distances between Poisson $k$-flats. Methodol. Comput. Appl. Probab. \textbf{16}, 311--329 (2014).

\end{thebibliography}

\end{document}